\theoremstyle{plain}
\newtheorem{theorem}{\textbf{Theorem}}[section]
\newtheorem{lemma}{\textbf{Lemma}}[section]
\newtheorem{proposition}{\textbf{Proposition}}[section]
\newtheorem{corollary}{\textbf{Corollary}}[section]
\newtheorem{remark}{\textbf{Remark}}[section]
\newtheorem{definition}{\textbf{Definition}}[section]
\def\R3{\mathbb{R}^3} 
\def\R{\mathbb{R}}
\def\F2o{\overline{F_2}}
\def \eps{\varepsilon}
\newcommand{\beq}[0]{\begin{equation}}
\newcommand{\eeq}[0]{\end{equation}}
\newcommand{\ff}[0]{\varphi}
\newcommand{\mint}[0]{\int_\Omega}
\newcommand{\Tint}[0]{\int_0^T}
\newcommand{\dert}[0]{\frac{d}{dt}}
\newcommand{\EE}[0]{{\mathcal E}}
\newcommand{\lep}[0]{\lambda_\eps}
\newcommand{\vc}[1]{\mathbf{#1}}
\newcommand{\mt}[1]{\mathsf{#1}}
\newcommand{\curl}[0]{\nabla\times}
\DeclareMathOperator{\sgn}{sgn}
\newcommand{\meas}[0]{\mathscr{L}^3}
\DeclareMathOperator{\supp}{supp}
\DeclareMathOperator{\tr}{tr}
\DeclareMathOperator{\argmin}{argmin}
\DeclareMathOperator{\cof}{\mt{cof}}
\DeclareMathOperator{\rank}{rank}
\newcommand{\mres}{\mathbin{\vrule height 1.6ex depth 0pt width
0.13ex\vrule height 0.13ex depth 0pt width 1.3ex}}
\def\blfootnote{\gdef\@thefnmark{}\@footnotetext}
\begin{document}

\title{Analysis of a moving mask approximation for martensitic transformations
}



\author{
{\sc Francesco Della Porta}\\
Mathematical Institute, University of Oxford\\
Oxford OX2 6GG, UK\\
\textit{francesco.dellaporta@maths.ox.ac.uk\footnote{Current institution: Max-Planck Institute for Mathematics in the Sciences, Leipzig, Germany, 
\textit{Francesco.DellaPorta@mis.mpg.de}}}
}

\date{\today}
\maketitle
\blfootnote{
\textbf{Acknowledgements:} {This work was supported by the Engineering and Physical Sciences Research Council [EP/L015811/1]. The author would like to thank John Ball for his helpful suggestions and feedback which greatly improved this work, as well as Richard James, Giacomo Canevari and Xian Chen for the useful discussions.}\\
}

\begin{abstract}
In this work we introduce a moving mask approximation to describe the dynamics of austenite to martensite phase transitions at a continuum level. In this framework, we prove a new type of Hadamard jump condition, from which we deduce that the deformation gradient must be of the form $\mt 1 +$ $\vc a\otimes \vc n$ a.e. in the martensite phase. This is useful to better understand the complex microstructures and the formation of curved interfaces between phases in new ultra-low hysteresis alloys such as Zn\textsubscript{45}Au\textsubscript{30}Cu\textsubscript{25}, {and provides a selection mechanism for physically-relevant energy-minimising microstructures. In particular, we use the new type of Hadamard jump condition to deduce a rigidity theorem for the two well problem. The latter provides more insight on the cofactor conditions, particular conditions of supercompatibility between phases believed to influence reversibility of martensitic transformations.}
\end{abstract}

\begin{section}{Introduction}
The aim of this work is to study from a mathematical point of view the complex microstructures arising during the austenite to martensite phase transition in ultra-low hysteresis alloys such as Zn\textsubscript{45}Au\textsubscript{30}Cu\textsubscript{25} (see \cite{JamesNew}). 
Austenite to martensite transitions are solid to solid phase transitions, in which the underlying crystal lattice of an alloy experiences a change of shape as temperature is moved across a certain critical temperature $\theta_T$. When the temperature is above $\theta_T$, the alloy has a unique crystalline structure, called austenite, which is energetically preferable; when the temperature is lowered below $\theta_T$, the energetically preferable state for the crystal is no longer austenite but martensite, which usually has more then one variant.
Often, a change of crystalline structure implies a change in the macroscopic properties of the material, which can thus be controlled by changing the temperature of the sample. 
A serious obstacle to practical applications of shape-memory and other such materials is reversibility of the transformation. Indeed, after a small number of cycles, one can usually observe a shift in the transition temperature and in the latent heat. Furthermore, the formation of micro-cracks during the phase transition often leads to early failure by rupture.  \\
\indent
An important step towards understanding the factors influencing reversibility can be found in 
\cite{JamesHyst}. There, the authors study particular conditions of geometric compatibility between martensitic variants called cofactor conditions, that were first introduced in \cite{BallJames1}.
Among these conditions, there is the requirement that the middle eigenvalue of the lattice transformation matrices is equal to $1$, which was previously shown (see \cite{JamesMuller}) to influence reversibility. In \cite{JamesHyst}, the authors prove that under the cofactor conditions no elastic transition layer is needed to make simple laminates compatible with austenite,
and point out that this fact might have important consequences on the reversibility of the phase transitions. 
Indeed, the authors observe that transition layers are intuitively both a cause of thermal hysteresis, and of the formation of dislocations and nucleation of micro-cracks, that, after many cycles, induce loss of good reversibility properties.\\
\indent
The recent fabrication announced in \cite{JamesNew} of Zn\textsubscript{45}Au\textsubscript{30}Cu\textsubscript{25}, the first material closely satisfying the cofactor conditions (the relative error is of order $10^{-4}$), partially confirms this conjecture. Indeed, this material exhibits ultra-low hysteresis and does not seem to incur any loss of reversibility after more than $16,000$ thermal cycles. {
We refer the reader interested in ultra-low hysteresis alloys also to \cite{Chluba}, where the fabrication of a new material undergoing $10^7$ cycles with very little fatigue is announced. A discussion on the relation between the cofactor conditions and ultra-low hysteresis alloys can be found in \cite{JamesSur,JamesSur2}. } \\
\indent
As remarked in \cite{JamesNew}, it is intriguing and unusual that martensitic microstructures in Zn\textsubscript{45}Au\textsubscript{30}Cu\textsubscript{25} are drastically different in consecutive thermally induced transformation cycles, this being partially motivated by the fact that the cofactor conditions are close to being satisfied by both some type I and some type II twins, which can all form zero energy interfaces with austenite. \\
\indent
The aim of this paper is to further study these microstructures, and to identify a common characterization for all of them. To this end we start from the following observation: from the dynamical point of view, it looks as if in every thermally induced phase transformation cycle in Zn\textsubscript{45}Au\textsubscript{30}Cu\textsubscript{25} there was a mask moving across the domain covering and uncovering martensite microstructures. This is equivalent to saying that the martensitic microstructures do not change after the phase transition has happened, which seems to be a particularly legitimate approximation in materials satisfying the cofactor conditions, where no interface layer is needed between phases. But this hypothesis makes sense also in many other materials as long as one considers macroscopic deformations. \\
\indent
{As a first step, we give a mathematical characterisation of the moving mask approximation and we frame it in the context of nonlinear elasticity, where phase changes are interpreted as elastic deformations. In particular, microstructures satisfying the moving mask approximation are special solutions to a simplified model for the dynamics of martensitic transformations which was introduced in \cite{FDP1}. The model in \cite{FDP1} is derived from the equations for the conservation of energy and momentum in the context of dynamics for nonlinear elasticity, and describes the evolution of the phase interface as a moving shock wave (see Section \ref{Sep phase})}.
%
Then, by deriving a new type of Hadamard jump condition (Section \ref{Hadamard generalizations}), we prove that every martensitic microstructure satisfying  the moving mask assumption and some further technical hypotheses must be of the form
\beq 
\label{Rango 1}\nabla \vc y(\vc x) = \mt 1+ \vc a( \vc x)\otimes \vc n( \vc x),\qquad \text{a.e.},
\eeq 
where $\vc n(\vc x)$ is, up to a change of sign, the phase interface normal at $\vc x$ when the point $\vc x$ is on the interface. The above result does not follow directly from the assumption that the deformation gradient is unchanged after the phase transition, and is not a direct consequence of previously known Hadamard jump conditions if $\vc y$ is just Lipschitz as in our case. 
We refer the interested reader to Section \ref{nonlonear elastic} for a brief review on known versions of the Hadamard jump conditions and on why they do not imply \eqref{Rango 1}. {
Subsequent experiments (see \cite{XC}) have measured $\|\cof(\nabla \vc y-\mt 1)\|$ in a sample of Zn\textsubscript{45}Au\textsubscript{30}Cu\textsubscript{25}. The measured values are of the order of $10^{-4}$, which seem to be small enough to partially confirm the validity of \eqref{Rango 1} and of the moving mask approximation.} {Conversely, from the moving mask hypotheses and our new Hadamard jump condition, we can reconstruct the position of the austenite-martensite interface during the phase transition from a martensitic deformation gradient.}
\\
\indent Under suitable hypotheses, we prove also that $\nabla \cdot \vc a =0$. This result relies on a Hadamard jump condition for strains in $BV(\Omega)$ that is proved in Section \ref{Basic} and generalizes that in \cite{MullerDolzman}. As a consequence of the fact that $\nabla\vc y = \mt 1+ \vc a\otimes \vc n$ almost everywhere in the martensitic microstructures, we can prove a rigidity theorem for compound twins and a result that allows for a better understanding of the nature of curved austenite-martensite interfaces for type I twins. {Under some further assumption, we can extend the rigidity result to a general two well problem not satisfying the cofactor conditions. This result explains the importance of satisfying the cofactor conditions in order to have non-constant average deformation gradients of the form \eqref{Rango 1}, which are obtained by finely mixing two martensitic variants.}\\
\indent
The dynamics of the phase transition in Zn\textsubscript{45}Au\textsubscript{30}Cu\textsubscript{25} are very complex, and far from being completely understood. As in the static case, a major obstacle towards a good understanding of the phenomenon remains the lack of a characterization of the quasiconvex hull of the set of possible deformation gradients. Nonetheless, our results provide an interesting set of tools that can be used to understand further the complex microstructures arising in martensitic phase transitions. {Indeed, our moving mask hypothesis can be seen as a selection mechanism for physicaly relevant energy minimising microstructures arising in thermally induced martensitic transformations.}
\\
\indent
Further investigation on why martensitic microstructures are so different in different thermal cycles in Zn\textsubscript{45}Au\textsubscript{30}Cu\textsubscript{25} is carried out in \cite{FDP3}. Indeed, in \cite{FDP3} we show that this material satisfies some further conditions of compatibility (on top of the cofactor conditions) that makes the set of possible macroscopic deformation gradients of the form \eqref{Rango 1} unusually large. 
\\
\indent
The plan for the paper is the following: in Section \ref{nonlonear elastic} we give a brief overview of the nonlinear elasticity model, and introduce concepts which will be useful for our analysis, namely twinning, the cofactor conditions and k-rectifiable sets. In Section \ref{Sep phase} we recall results from \cite{FDP1} and in this context we introduce the mathematical definition of moving mask approximation. 
In Section \ref{Hadamard generalizations} we prove a dynamic variant of the Hadamard jump condition for Lipschitz functions and curved interfaces. As explained above, the results rely on the hypothesis that the deformation gradient remains constant in time at a point of the domain, once the phase transition has occurred. 
{The last two sections are devoted to proving the rigidity results, and some results on moving austenite-martensite interfaces and on possible microstructures that can be explained using our model.}
\end{section}

\section{Preliminaries}

\begin{subsection}{Nonlinear elasticity model}
\label{nonlonear elastic}
In order to describe austenite to martensite phase transitions in crystalline solids, one of the most successful mathematical continuum models is nonlinear elasticity, which has proved to capture many aspects of the physical phenomena such as the formation of twins (see \cite{BallJames1}) and to be useful in understanding related behaviour such as the shape-memory effect (see \cite{BattSme}), and, more recently, hysteresis (see \cite{JamesMuller}). In this and the next subsection, we give a brief overview of the theory following closely \cite{BallKoumatos,JamesHyst}. For more details we refer the reader to \cite{BallJames1,BallJames2,Batt}. \\
\indent
The nonlinear elasticity model is based on the idea of looking at changes in the crystal lattice as elastic deformations in the continuum mechanics framework. Following \cite{BallJames1}, we hence assume that the deformations minimize a free energy of the type
\beq
\label{energia}
\EE(\vc y,\theta)  = \mint\phi(\nabla \vc y(\vc x),\theta)\,\mathrm d\vc x.
\eeq
Here, $\theta$ denotes the temperature of the crystal. Three different regimes are distinguishable depending on this parameter: $\theta<\theta_T$ and $\theta>\theta_T$, where respectively martensite and austenite phases minimize the energy, and $\theta=\theta_T$ where these are energetically equivalent. In~\eqref{energia}, the bounded Lipschitz domain (open and connected) $\Omega$ stands for the reference configuration of undistorted austenite at $\theta=\theta_T$ and $\vc y(\vc x)$ denotes the position of the particle $\vc x\in\Omega$ after the deformation. Finally, $\phi$ is the free-energy density, depending on the temperature $\theta$ and the deformation gradient $\nabla \vc y$, satisfying the following properties:
\begin{itemize}
\item $\mathcal D:=\{\mt F\in\mathbb{R}^{3\times 3}\colon \det \mt F >0\}$, 
$$\phi(\cdot,\theta)\colon\mathcal D\to \mathbb{R}$$ is a function bounded below by a constant depending on $\theta$ for each $\theta>0$;
\item $\phi(\cdot,\theta)$ satisfies frame-indifference, i.e., for all $\mt F\in\mathcal D$ and all rotations $\mt R\in SO(3)$, $\phi(\mt R \mt F,\theta)=\phi(\mt F,\theta)$. This property reflects the invariance of the free-energy density under rotations;
\item $\phi$ has cubic symmetry, i.e., $\phi(\mt {FQ},\theta)=\phi(\mt F,\theta)$ for all $\mt F\in\mathcal D$ and all rotations $\mt Q$ in the symmetry group of austenite $\mathcal{P}^{24}$, the group of rotations sending a cube into itself (see \cite{Batt} for more details);
\item denoting by $K_\theta$ the set of minima for the free-energy density at temperature $\theta$, i.e., $K_\theta:=\{\mt F\in\mathcal D\colon\,\mt F\in\argmin(\phi(G,\theta))\}$,
\beq
K_\theta = 
\begin{cases}
\alpha(\theta)SO(3),\qquad &\theta>\theta_T\\ 
SO(3)\cup\bigcup_{i=1}^NSO(3)\mt U_i(\theta_T), \qquad &\theta=\theta_T\\
\bigcup_{i=1}^NSO(3)\mt U_i(\theta), \qquad &\theta<\theta_T .
\end{cases}
\eeq
Here, $\alpha(\theta)$ is a scalar dilatation coefficient satisfying $\alpha(\theta_T)=1$, while {
$\mt U_i(\theta)\in\R^{3\times3}_{Sym^+}$ are the $N$ positive definite symmetric matrices corresponding to the transformation from austenite to the $N$ variants of martensite at temperature $\theta$. Here and below $\R^{3\times3}_{Sym^+}$ represents the set of $3\times3$ symmetric and positive definite matrices.} From now on, we omit the dependence on the temperature in $K_\theta$ when $\theta<\theta_T$, and neglect the dependence on $\theta$ of the $\mt U_i$. We remark that for each $\mt U_i,\mt U_j$ there exists $\mt R\in \mathcal{P}^{24}$ such that $\mt R^T\mt U_j\mt R = \mt U_i$, so that $\mt U_i,\mt U_j$ share the same eigenvalues.
\end{itemize}

Based on both experimental evidence and the mathematical complexity of other cases, most results in the literature are related to planar austenite-martensite interfaces $\{\vc x\in \R^3\colon\,\vc x\cdot \vc n = k\}$, with normal $\vc n$, at $\theta=\theta_T$. In this case, under suitable conditions on the lattice deformation and for some $\vc n\in\R^3$, it is possible to construct a sequence $\vc y^j$ such that
\begin{align}
\label{convergenza in misura}
&\nabla \vc y^j\to SO(3)\qquad &\text{in measure}\qquad &\text{for }\vc x\cdot \vc n<k\\
\label{convergenza in misura 2}
&\nabla \vc y^j\to \bigcup_{i=1}^N SO(3)\mt U_i\qquad &\text{in measure}\qquad &\text{for }\vc x\cdot \vc n>k.
\end{align}
Denoting by $\meas$ the three-dimensional Lebesgue measure, we notice that~\eqref{convergenza in misura}-\eqref{convergenza in misura 2} imply
$$
\lim_{j \to\infty}\meas\{\vc x\in\Omega\colon \nabla \vc y^j(\vc x)\notin K_{\theta_T}\}=0,
$$
and, under some further hypotheses on $\phi$, $\vc y^j$ is a minimizing sequence for $\EE(\cdot,\theta_T)$ (see \cite{BallJames1} for more details). Furthermore, since $\vc y^j$ can be constructed so as to be bounded in $W^{1,\infty}(\Omega,\R^3)$, there exists a subsequence $\vc y^{k_j}$ and $\vc y\in W^{1,\infty}(\Omega,\R^3)$ such that $\vc y^{k_j}$ converges to $\vc y$ weakly* in the same space. However, the energy functional is not quasiconvex and, in general, the minimum is not attained in the classical sense. Therefore, $\nabla \vc y$ is not a minimizer for $\EE(\cdot,\theta_T)$, but just of its relaxation, $\EE^{qc}(\cdot,\theta_T)$.
From a physical point of view, $\nabla \vc y$ represents the deformation gradient in the sample at a macroscopic scale, an average of the fine microstructures {
with gradients in $$K:=\bigcup_{i=1}^N SO(3)\mt U_i.$$}
It is important to remark that, in general, macroscopic deformation gradients $\nabla \vc y$ are not elements of $K$ a.e. in $\Omega$. Instead, we have $\nabla \vc y \in K^{qc}$ a.e. in $\Omega$, where
$$
K^{qc}:=
\Set{\mt M\in \R^{3\times 3}
\,\Big|\; \text{\parbox{3.in}{\centering $f(\mt M)\leq \max_K f$, for all continuous quasiconvex $f\colon\R^{3\times3}\to\R$}}}
,
$$
is the quasiconvex hull of the set $K$ (see \cite{Muller}). Characterizing the set of possible macroscopic deformations $K^{qc}$ is very important in order to fully understand the nonlinear elasticity model. 
On the other hand, the set of constant macroscopic gradients $\mt B$ which can form an interface with austenite, having constant gradient $\mt A$, is in general smaller then the whole of $K^{qc}$. Indeed, a Lipschitz function whose gradient is equal to $\mt A,\mt B$ a.e. in $\Omega$, with $\mt A,\mt B\in\R^{3\times3}$ must satisfy a generalized version of the Hadamard jump condition proved in \cite{BallJames1}: 
\begin{proposition}[{\cite[Prop. 1]{BallJames1}}]
\label{rank one connections}
Let $\Omega\in\R^3$ be open and connected. Assume $\vc y\in W^{1,\infty}(\Omega,\R^3)$ satisfies
\begin{align*}
\nabla \vc y(\vc x) = \mt  A,\qquad \text{a.e. }\vc x\in\Omega_A; \qquad
\nabla \vc y(\vc x) = \mt  B,\qquad \text{a.e. }\vc x\in\Omega _B,
\end{align*}
where $\mt A,\,\mt B \in \R^{3\times3}$ and $\Omega_A,\,\Omega_B$ are disjoint measurable sets such that 
$$\Omega_A\cup\Omega_B=\Omega,\qquad\meas (\Omega_A)>0,\qquad\meas(\Omega_B)>0.$$ 
Then,
$$
\mt A-\mt B= \vc a\otimes \vc n,\qquad\vc  a, \vc n\in\R^3,\,|\vc  n|=1. 
$$
\end{proposition}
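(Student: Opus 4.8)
The plan is to reduce the claim to showing that $\mt C:=\mt A-\mt B$ has rank at most one. First I would normalise: replacing $\vc y$ by $\vc z(\vc x):=\vc y(\vc x)-\mt B\vc x$, which lies in the same space, one gets $\nabla\vc z=\mt C\,\chi_{\Omega_A}$ a.e.\ in $\Omega$, since for a $W^{1,\infty}$ map the distributional gradient coincides with the a.e.\ pointwise gradient. If $\mt C=0$ there is nothing to prove, so assume $\mt C\neq0$ and denote by $\vc c_1,\vc c_2,\vc c_3\in\R^3$ its rows. It then suffices to show that all nonzero $\vc c_i$ are parallel to a single direction: writing $\vc c_i=a_i\vc n$ one obtains $\mt C=\vc a\otimes\vc n$ with $\vc a=(a_1,a_2,a_3)$, and rescaling gives $|\vc n|=1$.

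Next I would localise and mollify. Fix a connected open $\Omega'\Subset\Omega$ and a standard mollifier $\rho_\eps$; for $\eps<\dist(\Omega',\partial\Omega)$ put $g^\eps:=\chi_{\Omega_A}*\rho_\eps\in C^\infty(\Omega')$, so $0\le g^\eps\le1$ and, since mollification commutes with differentiation, $\nabla\vc z^\eps=(\nabla\vc z)*\rho_\eps=g^\eps\,\mt C$ on $\Omega'$, where $\vc z^\eps:=\vc z*\rho_\eps$. The $i$-th component of $\vc z^\eps$ is smooth, so its gradient $g^\eps\vc c_i$ is curl-free; as $\vc c_i$ is constant, $\curl(g^\eps\vc c_i)=\nabla g^\eps\times\vc c_i$, whence $\nabla g^\eps(\vc x)\times\vc c_i=0$ for all $\vc x\in\Omega'$ and all $i$. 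Thus $\nabla g^\eps(\vc x)$ is parallel to each nonzero row $\vc c_i$.

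Then I would argue by contradiction, supposing two rows — say $\vc c_1,\vc c_2$ — are linearly independent. The only vector parallel to both is $0$, so $\nabla g^\eps\equiv0$ on the connected set $\Omega'$, i.e.\ $g^\eps$ is a constant $c_\eps$ there. Letting $\eps\to0$ gives $g^\eps\to\chi_{\Omega_A}$ in $L^1(\Omega')$, hence a.e.\ along a subsequence, while $c_\eps=\meas(\Omega')^{-1}\int_{\Omega'}g^\eps\to\meas(\Omega')^{-1}\meas(\Omega_A\cap\Omega')$; therefore $\chi_{\Omega_A}$ is a.e.\ constant on $\Omega'$, i.e.\ $\meas(\Omega_A\cap\Omega')\in\{0,\meas(\Omega')\}$. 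To upgrade this to a global statement, I would let $\Omega^0$ (resp.\ $\Omega^1$) be the set of points of $\Omega$ having an open neighbourhood $U\subset\Omega$ with $\meas(\Omega_A\cap U)=0$ (resp.\ $\meas(U\setminus\Omega_A)=0$). Both are open; they are disjoint, as on the overlap of two such neighbourhoods $\Omega_A$ would be both null and conull; and, applying the dichotomy to small balls, they cover $\Omega$. Since $\Omega$ is connected one of them is empty — if $\Omega^1=\emptyset$ then $\meas(\Omega_A)=0$, and if $\Omega^0=\emptyset$ then $\meas(\Omega_B)=\meas(\Omega\setminus\Omega_A)=0$ — either way contradicting $\meas(\Omega_A),\meas(\Omega_B)>0$. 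Hence no two rows of $\mt C$ are independent, which proves the claim.

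The mollification and curl bookkeeping are routine; the one step that needs care is the passage from the local dichotomy ``on each $\Omega'\Subset\Omega$, $\Omega_A$ is null or conull'' to the global contradiction, which is where connectedness of $\Omega$ is essential and where one must check that $\Omega^0$ and $\Omega^1$ are genuinely open and exhaust $\Omega$.
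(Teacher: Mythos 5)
Your proof is correct and follows essentially the same mollification argument as the original Ball--James proof that the paper cites (the paper itself only states the proposition and refers to \cite{BallJames1}): smooth the characteristic function, use that mixed second derivatives of the mollified map commute (equivalently, that $\nabla g^\eps\times\vc c_i=0$), and exploit connectedness of $\Omega$ plus the positive-measure hypothesis to rule out $\nabla g^\eps\equiv 0$. The only organisational difference is that you run the dichotomy by contradiction (assume $\rank(\mt A-\mt B)\ge 2$, deduce $g^\eps$ constant), whereas the classical writeup splits on whether $\nabla\theta_\eps$ vanishes identically or not — logically the same argument.
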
  
This result forces a fixed macroscopic gradient of martensite with constant gradient $\mt B$ to be rank-one connected to austenite, having constant gradient $\mt A$, across every interface between the two. Furthermore, it implies that, in this case, every phase interface is planar. For these reasons, Proposition \ref{rank one connections} is the background for many results for compatibility between phases.\\
\indent
However, this type of result fails to be true for more general gradients $\nabla \vc y\in L^\infty(\Omega;\R^{3\times3})$. As a matter of fact, as shown in \cite{BallJames1}, it is possible to construct a Lipschitz function $\vc z\in W^{1,\infty}(\Omega,\R^3)$ which is constant in the set $\Omega\cap\{\vc x\colon \vc x\cdot \vc n > c\} $ for some $c\in\R$, $\vc n\in\R^3$, and whose gradient $\nabla \vc y \in \{\mt F_1,\mt F_2,\mt F_3,\mt F_4\}$ in $\Omega\cap\{\vc x\colon \vc x\cdot \vc n< c\}$ for some matrices $\mt F_i$, such that $\nabla \vc y$ is not rank-one almost everywhere in $\Omega\cap\{\vc x\colon \vc x\cdot\vc  n< c\}$. 
Indeed a fractal behaviour of $\nabla\vc  y$ close to $\vc x\cdot \vc n =c$, finely mixing martensitic variants near the interface, allows one to achieve compatibility between incompatible gradients. That is, compatibility is achieved on the average. 
Possible approaches to recovering the Hadamard jump condition in an average sense can be found in \cite{BallCarstensen_Tobe}, or in Remark \ref{HJC_weak} below.
Another generalization of Proposition \ref{rank one connections} was proved in \cite{BallKoumatos} by assuming $\vc y\in W^{1,\infty}(\Omega,\R^3)$ to be $C^1$ both in $\overline\Omega_A$ and $\overline\Omega_B$, with $\Omega_A,\Omega_B$ two open disjoint subdomains of $\Omega$, separated by a piecewise $C^1$, possibly curved, 2-dimensional interface $\Gamma$ such that $\Omega=\Omega_A\cup\Omega_B\cup\Gamma$. 
In the case of martensitic transformations, $\nabla \vc y=\mt 1$ in $\Omega_A$, while in $\Omega_B$ $\nabla \vc y$ represents a continuously varying macroscopic deformation gradient corresponding to a continuously varying martensitic microstructure. This result can be extended to $\vc y\in H^1(\Omega,\R^3)$ with $\nabla \vc y\in BV(\Omega;\R^{3\times 3})$ as done in the two dimensional setting in \cite{MullerDolzman}, or more generally in Lemma \ref{Had 1} below.
{
However, the deep result of \cite{Iwaniec} states that, in the case where $\vc y\in W^{1,\infty}(\R^3,\R^3)$, and $\vc y$ is constant in $\{\vc x\cdot \vc n > c\}$, the polyconvex hull of the set $\bigl\{\nabla\vc y(\vc x)\colon\vc x\in \{\vc x\cdot \vc n < c\} \bigr\}$, which contains $\bigl\{\nabla\vc y(\vc x)\colon\vc x\in \{\vc x\cdot \vc n < c\} \bigr\}^{qc}$, might not contain a matrix which is rank-one connected to $\mt 0$, the deformation gradient in $\{\vc x\cdot \vc n > c\}$.}
%
%
%
\\
\indent
%
%
On the other hand, in order to fully capture the complex microstructures observed in Zn\textsubscript{45}Au\textsubscript{30}Cu\textsubscript{25}, we are interested in macroscopic deformation gradients that are just in $L^\infty(\Omega,\R^{3\times 3})$. 
Therefore, 
in Section \ref{Hadamard generalizations} we generalize Proposition \ref{rank one connections} to non-constant deformation gradients in $L^\infty(\Omega;\R^{3\times3})$ and to curved interfaces. However, given the above mentioned counterexamples of \cite{BallJames1,Iwaniec}, we need to change perspective and introduce some further hypotheses. This is done by recalling the idea of a moving mask explained in the introduction, which is mathematically framed in Section \ref{Sep phase}, and where the deformation gradient at a certain point $\vc x$ changes only once during phase transition, i.e., when the martensite-austenite interface passes through $\vc x$.  
\end{subsection}

\subsection{Twinning theory and the cofactor conditions}
\label{cofac cond sec}
As explained in the previous section, the existence of a constant macroscopic martensitic deformation compatible with austenite, is related to the existence of a matrix $\mt F\in K^{qc}$ such that $\mt F=\textbf{1}+\vc a\otimes \vc n$. Conditions on the deformation parameters under which such matrices exist have been first investigated in \cite{BallJames1} in the case of two wells, i.e., $N=2$, and then generalized in \cite{BallCarstensen1,BallCarstensen2}. The case where $N=2$ is the most widely studied, as it is the only one for which an explicit characterization of $K^{qc}$ is known, and turns out to be a fundamental tool to explain a wide range of experimental observations. Therefore, we now focus on the possibility of a pair of martensitic variants forming interfaces with austenite. The notation and results of this section follow closely those in  \cite{JamesHyst}.\\

Let us first recall that given two different variants of martensite, represented by $\mt U_1,\mt U_2\in \R^{3\times3}_{Sym^+}$, there exists a rotation $\mt R\in SO(3)$ satisfying $\mt U_2=\mt {RU}_1\mt R^T$. A first useful result is the following:
\begin{proposition}[{\cite[Prop. 12]{JamesHyst}}]
Let $\mt U_1,\mt U_2\in\R^{3\times3}_{Sym^+}$ with $\mt U_1\neq\mt U_2$. 
Suppose further that they are compatible in the sense that there is a matrix $\hat {\mt R}\in SO(3)$ such that
\beq
\label{compatib condit}
\hat{\mt R}\mt U_2-\mt U_1= \vc b\otimes \vc m,
\eeq
$\vc b$, $\vc m\in\R^3$. Then there is a unit vector $\hat{\vc e}\in\R^3$ such that
\beq
\label{e hat}
\mt U_2 = (-\mt {1}+2\hat{\vc e}\otimes \hat{\vc e})\mt U_1(-\mt {1}+2\hat{\vc e}\otimes \hat{\vc e}).
\eeq
Conversely, if \eqref{e hat} is satisfied, then there exist $\hat{\mt R}\in SO(3)$, $\vc b,\vc m\in\R^3$ such that \eqref{compatib condit} holds.
\end{proposition}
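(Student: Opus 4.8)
The statement is an equivalence, and I would prove the two implications separately; the direct implication \eqref{compatib condit}$\Rightarrow$\eqref{e hat} is the harder one.

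For the converse \eqref{e hat}$\Rightarrow$\eqref{compatib condit} the plan is to exhibit a rank--one connection explicitly. Let $\mt Q:=-\mt 1+2\hat{\vc e}\otimes\hat{\vc e}$, the rotation by $\pi$ about $\hat{\vc e}$; it is symmetric and orthogonal with $\mt Q^{2}=\mt 1$, and by \eqref{e hat} one has $\mt U_2=\mt Q\mt U_1\mt Q$. Seeking $\hat{\mt R}\in SO(3)$ with $\hat{\mt R}\mt U_2-\mt U_1=\vc b\otimes\hat{\vc e}$, I observe that such an identity would force $(\hat{\mt R}\mt U_2-\mt U_1)\vc v=0$ for every $\vc v\perp\hat{\vc e}$; since $\mt Q\vc v=-\vc v$ on $\hat{\vc e}^{\perp}$ and $\mt U_2=\mt Q\mt U_1\mt Q$, this is equivalent to asking that $\hat{\mt R}\mt Q$ act as $-\mt 1$ on the plane $\mt U_1(\hat{\vc e}^{\perp})$, which --- for an element of $SO(3)$ --- forces $\hat{\mt R}\mt Q$ to be the rotation by $\pi$ about the line orthogonal to that plane, i.e.\ about $\mt U_1^{-1}\hat{\vc e}$. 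Accordingly I would set
\[
\hat{\mt R}:=\Bigl(-\mt 1+\tfrac{2}{|\mt U_1^{-1}\hat{\vc e}|^{2}}\,\mt U_1^{-1}\hat{\vc e}\otimes\mt U_1^{-1}\hat{\vc e}\Bigr)\mt Q,\qquad \vc b:=(\hat{\mt R}\mt U_2-\mt U_1)\hat{\vc e},
\]
and verify directly that $\hat{\mt R}\in SO(3)$ (a product of two rotations by $\pi$) and that, with this choice, $\hat{\mt R}\mt U_2-\mt U_1$ annihilates $\hat{\vc e}^{\perp}$, so that $\hat{\mt R}\mt U_2-\mt U_1=\vc b\otimes\hat{\vc e}$, which is \eqref{compatib condit} with $\vc m=\hat{\vc e}$. (This gives the ``type I'' connection; interchanging the roles of the normal and the shear direction produces a second, ``type II'', one.)

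For the direct implication, normalise $|\vc m|=1$ and multiply $\hat{\mt R}\mt U_2=\mt U_1+\vc b\otimes\vc m$ by its transpose; using $\hat{\mt R}^{T}\hat{\mt R}=\mt 1$ and the symmetry of $\mt U_1,\mt U_2$ this yields
\[
\mt U_2^{2}=\mt U_1^{2}+\vc r\otimes\vc m+\vc m\otimes\vc r,\qquad \vc r:=\mt U_1\vc b+\tfrac12|\vc b|^{2}\vc m .
\]
Because $\mt U_1$ and $\mt U_2$ are martensitic variants they have the same eigenvalues (as recalled just above the statement), so $\mt U_1^{2}$ and $\mt U_2^{2}$ have equal traces, which forces $\vc r\cdot\vc m=0$; moreover $\vc r\neq 0$, for otherwise $\mt U_2^{2}=\mt U_1^{2}$ and hence $\mt U_1=\mt U_2$. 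Thus $\{\vc m,\hat{\vc r}\}$, with $\hat{\vc r}:=\vc r/|\vc r|$, is an orthonormal pair. The key reduction is now: it suffices to find a unit vector $\hat{\vc e}$ with $\mt U_2^{2}=\mt Q_{\hat{\vc e}}\mt U_1^{2}\mt Q_{\hat{\vc e}}$, where $\mt Q_{\hat{\vc e}}:=-\mt 1+2\hat{\vc e}\otimes\hat{\vc e}$, because then $\mt Q_{\hat{\vc e}}\mt U_1\mt Q_{\hat{\vc e}}$ is a symmetric positive--definite square root of $\mt U_2^{2}$ and therefore equals $\mt U_2$ by uniqueness of such square roots, which is \eqref{e hat}.

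To find this $\hat{\vc e}$, I would expand $\mt Q_{\hat{\vc e}}\mt U_1^{2}\mt Q_{\hat{\vc e}}-\mt U_1^{2}=\vc d\otimes\hat{\vc e}+\hat{\vc e}\otimes\vc d$ with $\vc d:=-2(\mt 1-\hat{\vc e}\otimes\hat{\vc e})\mt U_1^{2}\hat{\vc e}$ (note $\vc d\perp\hat{\vc e}$), and compare with $\mt U_2^{2}-\mt U_1^{2}=\vc r\otimes\vc m+\vc m\otimes\vc r$. Equating the kernels of these two symmetric matrices forces $\hat{\vc e}\in\mathrm{span}\{\vc m,\vc r\}$; then, since $2(\vc r\cdot\hat{\vc e})(\vc m\cdot\hat{\vc e})=\hat{\vc e}\cdot(\vc d\otimes\hat{\vc e}+\hat{\vc e}\otimes\vc d)\hat{\vc e}=0$, one gets $\hat{\vc e}\in\{\pm\vc m,\pm\hat{\vc r}\}$. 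It then remains to check that for at least one of these two choices the matrices coincide \emph{exactly}, i.e.\ that $-2(\mt 1-\vc m\otimes\vc m)\mt U_1^{2}\vc m=\vc r$ or $-2(\mt 1-\hat{\vc r}\otimes\hat{\vc r})\mt U_1^{2}\hat{\vc r}=|\vc r|\vc m$. I expect this last verification to be the main obstacle: it is not a consequence of the trace identity alone, and carrying it out requires bringing in the remaining spectral constraints (e.g.\ $\det\mt U_1=\det\mt U_2$ and $\tr\mt U_1^{4}=\tr\mt U_2^{4}$) together with the explicit form of $\vc r$ in a short but delicate computation --- this step is in essence the classical twinning lemma of \cite{BallJames1}, whereas everything preceding it is routine linear algebra.
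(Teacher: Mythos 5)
The paper does not prove this proposition; it imports it verbatim from \cite{JamesHyst} (cited there as Prop.~12), so there is no internal proof to compare against --- your argument must be judged on its own merits.

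Your converse direction is correct and complete once the one ``verify directly'' step is carried out (and it does go through: for $\vc v\perp\hat{\vc e}$ one has $\hat{\mt R}\mt U_2\vc v=\bigl(-\mt 1+\tfrac{2}{|\mt U_1^{-1}\hat{\vc e}|^{2}}\mt U_1^{-1}\hat{\vc e}\otimes\mt U_1^{-1}\hat{\vc e}\bigr)(-\mt U_1\vc v)=\mt U_1\vc v$, using the symmetry of $\mt U_1$ to kill the projection term). Identifying $\hat{\mt R}$ as the product of the $\pi$-rotations about $\hat{\vc e}$ and $\mt U_1^{-1}\hat{\vc e}/|\mt U_1^{-1}\hat{\vc e}|$ is exactly the type~I solution.

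The forward implication, however, has a genuine gap --- the one you yourself flag. Two remarks. First, you are right to import the equal-eigenvalues hypothesis from the surrounding discussion: as literally stated the proposition only assumes $\mt U_1,\mt U_2\in\R^{3\times3}_{Sym^+}$, and without common eigenvalues the forward implication is false (take $\mt U_1=\diag(1,2,3)$, $\mt U_2=\diag(2,2,3)$, $\hat{\mt R}=\mt 1$, $\vc b=\vc m=\vc e_1$; \eqref{compatib condit} holds but $\mt U_1,\mt U_2$ are not orthogonally conjugate). Second, your reduction to finding $\hat{\vc e}$ with $\mt U_2^2=\mt Q_{\hat{\vc e}}\mt U_1^{2}\mt Q_{\hat{\vc e}}$ via uniqueness of positive-definite square roots is a clean observation, and the deduction $\hat{\vc e}\in\{\pm\vc m,\pm\hat{\vc r}\}$ is sound (one should add that $\vc d\neq\vc 0$, since $\vc d=\vc 0$ would make $\hat{\vc e}$ an eigenvector of $\mt U_1^2$ and force $\mt U_2^2=\mt U_1^2$, hence $\mt U_1=\mt U_2$). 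But everything up to this point is a \emph{necessity} argument: it tells you where $\hat{\vc e}$ must live if it exists, not that it exists. The substantive content of the proposition --- that one of $\vc m$, $\hat{\vc r}$ genuinely satisfies $\vc d=\vc r$ (resp.\ $\vc d=|\vc r|\vc m$), using the remaining spectral identities $\det\mt U_2^2=\det\mt U_1^2$ and $\tr\cof\mt U_2^2=\tr\cof\mt U_1^2$ --- is not a formality to be waved through as ``the classical twinning lemma''; it \emph{is} the theorem (Mallard's law / the reciprocal-twin relation). Until that verification is actually done, the forward direction is unproved.
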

Equation \eqref{compatib condit} is called the compatibility condition for two variants of martensite; the solutions to this equation can be classified into three categories: compound, type I and type II twins. It is possible to prove that (see e.g., \cite{Batt}), once $\mt U_1$ and $\mt U_2$ are given and \eqref{e hat} holds, the compatibility condition always has two solutions $(\hat{\mt  R}_I, \vc b_I\otimes \vc m_I)$ and $(\hat {\mt R}_{II},\vc b_{II}\otimes \vc m_{II})$. The solutions can be expressed as follows:
\begin{align}
\label{10}
&\text{type I}\qquad &&\vc m_I = \hat{\vc e}, \qquad &&\vc b_{I}=2\Big(\frac{\mt U_1^{-1}\hat{\vc e}}{|\mt U_1^{-1}\hat{\vc e}|^2}-\mt U_1\hat{\vc e}\Big),\\
\label{11}
&\text{type II}\qquad &&\vc m_{II} = 2\Big(\hat{\vc e}-\frac{\mt U_1^{2}\hat{\vc e}}{|\mt U_1\hat{\vc e}|^2}\Big),\qquad &&\vc b_{II}=\mt U_1\hat{\vc e},
\end{align}
where $\hat{\vc e}$ is as in \eqref{e hat}. 
If $\hat{\vc  e}$ satisfying \eqref{e hat} is unique up to change of sign, the two solutions \eqref{10} and \eqref{11} of \eqref{compatib condit} are called type I and type II twins respectively. In case there exist two different non-parallel unit vectors satisfying \eqref{e hat}, the resulting pair of solutions \eqref{10}--\eqref{11} are called compound twins. Nonetheless,
it is possible to prove (see e.g., \cite{Batt}) that in the case of compound twins, given two different unit vectors satisfying \eqref{e hat}, namely $\hat{\vc e}_1$ and $\hat{\vc e}_2$, then
$$
\vc b_I^1\otimes \vc m_I^1 :=2\Big(\frac{\mt U_1^{-1}\hat{\vc e}_1}{|\mt U_1^{-1}\hat{\vc e}_1|^2}-\mt U_1\hat{\vc e}_1\Big)\otimes\hat{\vc e}_1= \mt U_1\hat{\vc e}_2 \otimes 2\Big(\hat{\vc e}_2-\frac{\mt U_1^{2}\hat{\vc e}_2}{|\mt U_1\hat{\vc e}_2|^2}\Big)=:\vc b_{II}^2\otimes \vc m_{II}^2.
$$
Therefore, there are just two solutions to \eqref{e hat}, even in the case of compound twins, each of which can be considered as both a type I and a type II twin. Below, however, when we refer to type I or type II solutions of \eqref{compatib condit} we assume implicitly that they are not compound solutions. Furthermore, we sometimes abuse of notation and write that $\mt U_1,\mt U_2$ form a compound twin if the solutions of the twinning equations \eqref{compatib condit} are compound twins.
The following characterization of compound twins is used below:
\begin{proposition}[{\cite[Prop. 1]{JamesHyst}}]
\label{comp dom char}
Let $\mt U_1$ and $\mt U_2$ be two different variants of martensite and $\hat{\vc  e}_1$ a unit vector such that \eqref{e hat} is satisfied. Then there exists a second unit vector $\hat {\vc e}_2$ not parallel to $\hat {\vc e}_1$ satisfying \eqref{e hat} if and only if $\hat {\vc e}_1$ is perpendicular to an eigenvector of $\mt U_1$. When this condition is verified, $\hat {\vc e}_2$ is unique up to change of sign and is perpendicular to both $\hat{\vc  e}_1$ and that eigenvector.
\end{proposition}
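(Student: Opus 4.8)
The plan is to rephrase \eqref{e hat} as a commutation property. For a unit vector $\vc e$ set $\mt Q_{\vc e}:=-\mt 1+2\,\vc e\otimes\vc e$; this is a symmetric involution in $SO(3)$, namely the rotation by $\pi$ about $\vc e$, and conversely every rotation by $\pi$ equals $\mt Q_{\vc e}$ for a unit vector $\vc e$ unique up to sign. Since $\mt Q_{\hat{\vc e}_1}$ is an involution, writing \eqref{e hat} both for $\hat{\vc e}$ and for $\hat{\vc e}_1$, equating the right-hand sides, and conjugating by $\mt Q_{\hat{\vc e}_1}$ shows that a unit vector $\hat{\vc e}$ satisfies \eqref{e hat} if and only if the rotation $\mt S:=\mt Q_{\hat{\vc e}_1}\mt Q_{\hat{\vc e}}$ commutes with $\mt U_1$. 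Since $\mt Q_{\hat{\vc e}}=\mt Q_{\hat{\vc e}_1}\mt S$, the solutions $\hat{\vc e}$ of \eqref{e hat} correspond exactly to those $\mt S\in SO(3)$ that commute with $\mt U_1$ and for which $\mt Q_{\hat{\vc e}_1}\mt S$ is again a rotation by $\pi$.

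I would then prove the following elementary fact in $SO(3)$: if $\mt P$ is the rotation by $\pi$ about a unit vector $\vc u$ and $\mt S\in SO(3)$, then $\mt P\mt S$ is a rotation by $\pi$ if and only if $\mt S=\mt 1$ or $\mt S$ is a nontrivial rotation about an axis orthogonal to $\vc u$; moreover, if $\mt S=\mt Q_{\vc v}$ with $\vc v$ a unit vector orthogonal to $\vc u$, then $\mt P\mt S=\mt Q_{\vc f}$ for a unit vector $\vc f$ parallel to $\vc u\times\vc v$, in particular $\vc f\perp\vc u$ and $\vc f\perp\vc v$. This can be verified with unit quaternions — a rotation has angle $\pi$ exactly when the scalar part of its quaternion vanishes, and the scalar part of $\vc u\,(\cos\tfrac{\beta}{2}+\sin\tfrac{\beta}{2}\,\vc v)$ is $-\sin\tfrac{\beta}{2}\,\langle\vc u,\vc v\rangle$ — or, avoiding quaternions, by using that a rotation is symmetric precisely when it is $\mt 1$ or a rotation by $\pi$, so that $\mt P\mt S$ being symmetric is equivalent to $\mt P\mt S\mt P=\mt S^{-1}$, i.e.\ to conjugation by $\mt P$ inverting $\mt S$; since conjugation by $\mt P$ rotates the axis of $\mt S$ by $\mt P$ and leaves its angle unchanged, this forces $\mt S$ into the stated form.

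Next, from $\mt U_1\neq\mt U_2$ one reads off that $\mt U_1$ is not a multiple of $\mt 1$ and that $\mt Q_{\hat{\vc e}_1}$ does not commute with $\mt U_1$ (otherwise \eqref{e hat} would give $\mt U_2=\mt U_1$); hence $\hat{\vc e}_1$ is not an eigenvector of $\mt U_1$, and, if $\mt U_1$ has a double eigenvalue, $\hat{\vc e}_1$ does not lie in the corresponding eigenplane. I would then distinguish two cases. If $\mt U_1$ has three distinct eigenvalues with unit eigenvectors $\vc w_1,\vc w_2,\vc w_3$, the rotations commuting with $\mt U_1$ are exactly $\mt 1$ and the $\mt Q_{\vc w_i}$, and by the fact above $\mt Q_{\hat{\vc e}_1}\mt Q_{\vc w_i}$ is a rotation by $\pi$ precisely when $\vc w_i\perp\hat{\vc e}_1$; moreover $\hat{\vc e}_1$ can be orthogonal to at most one of the eigenlines, since being orthogonal to two would make it parallel to the third and then $\mt Q_{\hat{\vc e}_1}$ would commute with $\mt U_1$. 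If $\mt U_1$ has a double eigenvalue with eigenplane $P$ and simple unit eigenvector $\vc w$, the rotations commuting with $\mt U_1$ are the rotations about $\vc w$ together with the rotations by $\pi$ about axes in $P$; because $\hat{\vc e}_1$ is neither parallel to $\vc w$ nor contained in $P$, the only commuting $\mt S$ for which $\mt Q_{\hat{\vc e}_1}\mt S$ is a rotation by $\pi$ are $\mt S=\mt 1$ and $\mt S=\mt Q_{\vc n_0}$, where $\vc n_0$ spans the line $P\cap\hat{\vc e}_1^{\perp}$ (genuinely a line, as $P\neq\hat{\vc e}_1^{\perp}$).

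In either case, discarding the trivial choice $\mt S=\mt 1$ (which gives back $\hat{\vc e}=\pm\hat{\vc e}_1$), a solution of \eqref{e hat} not parallel to $\hat{\vc e}_1$ exists if and only if $\mt U_1$ has an eigenvector $\vc w$ orthogonal to $\hat{\vc e}_1$; this $\vc w$ is then unique up to sign, the admissible $\mt S$ is uniquely $\mt Q_{\vc w}$, and, defining $\mt Q_{\hat{\vc e}_2}:=\mt Q_{\hat{\vc e}_1}\mt Q_{\vc w}$, the last part of the fact above gives $\hat{\vc e}_2$ parallel to $\hat{\vc e}_1\times\vc w$, i.e.\ $\hat{\vc e}_2$ is, up to sign, a unit vector orthogonal to both $\hat{\vc e}_1$ and $\vc w$ — which is the assertion. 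I expect the main difficulty to be organisational rather than substantial: stating and proving the $SO(3)$ fact cleanly, in particular ruling out the degenerate subcases in which a product of two rotations collapses to $\mt 1$, and treating the double-eigenvalue case with care, where one must note that the eigenvectors of $\mt U_1$ orthogonal to $\hat{\vc e}_1$ still form a single line, so that ``that eigenvector'', and hence $\hat{\vc e}_2$, are determined up to sign.
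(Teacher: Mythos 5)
The paper does not prove this proposition; it is quoted verbatim from \cite[Prop.~1]{JamesHyst} and used as a black box, so there is no in-paper argument to compare against. Judged on its own terms, your plan is sound. The reformulation of \eqref{e hat} as a commutation condition --- $\hat{\vc e}$ solves \eqref{e hat} iff $\mt S:=\mt Q_{\hat{\vc e}_1}\mt Q_{\hat{\vc e}}$ commutes with $\mt U_1$ and $\mt Q_{\hat{\vc e}_1}\mt S$ is again a $\pi$-rotation --- is correct, and so is the supporting $SO(3)$ fact (a $\pi$-rotation times a rotation is again a $\pi$-rotation iff the second factor is trivial or has axis orthogonal to the first, with the resulting axis along the cross product of the two axes); both of your proposed derivations of it, via the vanishing of the scalar part of the product quaternion or via $\mt P\mt S$ symmetric $\Leftrightarrow \mt P\mt S\mt P=\mt S^{-1}$, go through. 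The case split on the spectrum of $\mt U_1$ then uses the standard description of the $SO(3)$-centraliser of a symmetric matrix and gives existence, uniqueness up to sign, and the orthogonality claims; the preliminary observations that $\mt U_1$ cannot be a multiple of $\mt 1$ and that $\hat{\vc e}_1$ cannot be an eigenvector of $\mt U_1$ (both would force $\mt U_2=\mt U_1$) correctly rule out the degeneracies. Two small points worth making explicit when you write this up: first, the key equivalence ``$\mt Q_{\vc e}$ commutes with the symmetric $\mt U_1$ iff $\vc e$ is an eigenvector of $\mt U_1$'' deserves a one-line proof (apply $\mt Q_{\vc e}$ to $\mt U_1\vc e$), since you invoke it tacitly; second, in the double-eigenvalue case the ``if and only if'' in the statement is vacuous in the sense that both sides hold automatically once $\hat{\vc e}_1$ is not an eigenvector, and it is worth saying so --- your argument already covers it, since $\hat{\vc e}_1\notin P$ makes $P\cap\hat{\vc e}_1^{\perp}$ a genuine line and $\hat{\vc e}_1\not\perp\vc w$ rules out the one-parameter family of rotations about $\vc w$, so no continuum of solutions can appear.
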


Let us now consider a simple laminate, i.e., a constant macroscopic gradient $\nabla \vc y$ equal a.e. to $\lambda \hat {\mt R}\mt U_2+(1-\lambda)\mt U_1$ for some $\lambda\in(0,1)$ and some rank-one connected $\mt {RU}_2,\mt U_1\in K$. Following \cite{BallJames1,JamesHyst} we focus on the possibility for such $\nabla \vc y$
to be compatible with austenite. By Proposition \ref{rank one connections}, 
a necessary condition is that $SO(3)$ has 
a rank-one connection with $\lambda \hat {\mt R}\mt U_2+(1-\lambda)\mt U_1$. The existence of $(\mt R,\lambda,\vc a\otimes \vc n)$ solving
\beq
\label{auste marte}
\mt R\bigl[\lambda \hat{\mt  R}\mt U_2+(1-\lambda)\mt U_1\bigr]-\mt{1}=\mt R\bigl[\lambda(\mt U_1+\vc b\otimes \vc m)+(1-\lambda)\mt U_1\bigr]-\mt {1} = \vc a\otimes \vc n,
\eeq
that is a twinned laminate compatible with austenite, was first studied in \cite{Wechsler} and later in \cite{BallJames1}. 
Lattice deformations and parameters of materials that are usually considered in the literature lead to twins with 
exactly four solutions to equation \eqref{auste marte}. Nonetheless, in some cases the number of solutions can be just zero, one or two, and, under some particular condition on the lattice parameters, as in the case of the material discovered in \cite{JamesNew}, \eqref{auste marte} is satisfied for all $\lambda \in [0,1]$. The following result gives necessary and sufficient conditions for this to hold:
\begin{theorem}[{\cite[Thm. 2]{JamesHyst}}]
\label{thm cof cond}
Let $\mt U_1,\mt U_2\in \R^{3\times3}_{Sym^+}$ be distinct and such that there exist $\hat {\mt R}\in SO(3)$ and $\vc b,\vc m\in\R^3$ satisfying
$$
\hat{\mt  R} \mt U_2 =\mt  U_1 + \vc b\otimes \vc m.
$$
Then, \eqref{auste marte} has a solution $\mt R\in SO(3)$, $\vc a,\vc n\in \R^3$ for each $\lambda\in [0,1]$ if and only if the following \textbf{cofactor conditions} hold:
\begin{description}
\item[(CC1)] The middle eigenvalue $\lambda_2$ of $\mt U_1$ satisfies $\lambda_2=1$,
\item[(CC2)] $ \vc b\cdot \mt U_1\cof (\mt U_1^2-\mt {1})\vc m=0$,
\item[(CC3)] $\tr \mt U_1^2-\det \mt U_1^2-\frac{1}{4}|\vc b|^2|\vc m|^2-2\geq 0.
$
\end{description}
\end{theorem}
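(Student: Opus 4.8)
The plan is to reduce the theorem to a question about the middle singular value of the affine family $\mt F_\lambda:=\lambda\hat{\mt R}\mt U_2+(1-\lambda)\mt U_1=\mt U_1+\lambda\,\vc b\otimes\vc m$, $\lambda\in[0,1]$. By a classical lemma of Ball and James~\cite{BallJames1}, for a fixed $\lambda$ equation~\eqref{auste marte} has a solution $(\mt R,\vc a,\vc n)\in SO(3)\times\R^3\times\R^3$ if and only if the ordered eigenvalues $\mu_1(\lambda)\le\mu_2(\lambda)\le\mu_3(\lambda)$ of $\mt C_\lambda:=\mt F_\lambda^T\mt F_\lambda$ satisfy $\mu_2(\lambda)=1$; since $\mt C_\lambda-\mt 1$ then has a zero eigenvalue, this is equivalent to the pair of conditions $\det(\mt C_\lambda-\mt 1)=0$ and $\tr\cof(\mt C_\lambda-\mt 1)\le 0$, the inequality expressing that the other two eigenvalues of $\mt C_\lambda-\mt 1$ have opposite signs, i.e.\ that $1$ is the \emph{middle} eigenvalue of $\mt C_\lambda$. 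It therefore suffices to prove that these two conditions hold for every $\lambda\in[0,1]$ if and only if \textbf{(CC1)}--\textbf{(CC3)} hold.

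The key structural observation is that $\lambda\mapsto\det(\mt C_\lambda-\mt 1)$ and $\lambda\mapsto\tr\cof(\mt C_\lambda-\mt 1)$ are polynomials in $\lambda$ of degree at most two. Since $\mt U_1$ and $\mt U_2$ are orthogonally conjugate (cf.~\eqref{e hat}) they share the eigenvalues $\lambda_1\le\lambda_2\le\lambda_3$, so $\det\mt F_0=\det\mt F_1=\det\mt U_1=:\delta$; but $\det\mt F_\lambda=\delta\,(1+\lambda\,\vc m\cdot\mt U_1^{-1}\vc b)$ is affine in $\lambda$, hence constant, and $\vc m\cdot\mt U_1^{-1}\vc b=0$. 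The Sherman--Morrison formula then gives $\mt F_\lambda^{-1}=\mt U_1^{-1}-\lambda\,(\mt U_1^{-1}\vc b)\otimes(\mt U_1^{-1}\vc m)$, which is affine in $\lambda$, so $\cof\mt F_\lambda=\delta\,\mt F_\lambda^{-T}$ is affine too. Using $\tr\mt C_\lambda=\tr(\mt F_\lambda^T\mt F_\lambda)$, $\tr\cof\mt C_\lambda=\tr\bigl((\cof\mt F_\lambda)^T\cof\mt F_\lambda\bigr)$, $\det\mt C_\lambda=\delta^2$, and the algebraic identities $\det(\mt C-\mt 1)=\det\mt C-\tr\cof\mt C+\tr\mt C-1$ and $\tr\cof(\mt C-\mt 1)=\tr\cof\mt C-2\tr\mt C+3$, both functions are quadratic in $\lambda$.

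Write $f(\lambda):=\det(\mt C_\lambda-\mt 1)$. As $\mt U_1$ and $\mt U_2$ have the same spectrum, $f(0)=\prod_i(\lambda_i^2-1)=f(1)$. A quadratic that vanishes on $[0,1]$ is identically zero, and for $f$ (already satisfying $f(0)=f(1)$) this amounts to exactly two scalar conditions: $f(0)=0$ and the leading coefficient of $f$ vanishes. The first, together with the $\lambda=0$ case of the criterion above---which requires $1$ to be the \emph{middle} eigenvalue of $\mt U_1$, not merely an eigenvalue---is precisely \textbf{(CC1)}. Computing the leading coefficient of $f$ from the affine forms above gives $|\vc b|^2|\vc m|^2-\delta^2|\mt U_1^{-1}\vc b|^2|\mt U_1^{-1}\vc m|^2$, and substituting the explicit type~I and type~II twinning solutions~\eqref{10}--\eqref{11} shows that, under \textbf{(CC1)}, this vanishes if and only if \textbf{(CC2)} holds. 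Now assume \textbf{(CC1)}--\textbf{(CC2)}, so that $f\equiv 0$ and $1$ is always an eigenvalue of $\mt C_\lambda$; a short computation using \textbf{(CC2)} shows that $g(\lambda):=\tr\cof(\mt C_\lambda-\mt 1)$ has leading coefficient $-|\vc b|^2|\vc m|^2<0$ (here $\vc b,\vc m\neq 0$ since $\mt U_1\neq\mt U_2$), so $g$ is strictly concave, while $g(0)=g(1)=(\lambda_1^2-1)(\lambda_3^2-1)\le 0$ by \textbf{(CC1)}. Hence $g\le 0$ on $[0,1]$ if and only if the value of $g$ at its vertex $\lambda=\tfrac12$ is nonpositive, and one computes $g(\tfrac12)=\tfrac14|\vc b|^2|\vc m|^2-\bigl(\tr\mt U_1^2-\det\mt U_1^2-2\bigr)$, so $g(\tfrac12)\le 0$ is exactly \textbf{(CC3)}. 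Reading these equivalences forwards gives the ``if'' implication; reading them backwards---solvability at $\lambda=0$ forces \textbf{(CC1)}; solvability for all $\lambda$ forces $f\equiv 0$ and hence \textbf{(CC2)}; solvability at $\lambda=\tfrac12$ forces \textbf{(CC3)}---gives the ``only if'' implication.

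The one step that is not mere bookkeeping is the identification, in the previous paragraph, of the vanishing of the leading coefficient of $f$ with \textbf{(CC2)}: unlike the rest of the argument it genuinely uses the detailed structure of the twinning pair $(\vc b,\vc m)$, and it is cleanest to carry out by diagonalising $\mt U_1$ and handling type~I and type~II separately---when the relevant component of $\hat{\vc e}$ vanishes the corresponding component of $\vc b$ or of $\vc m$ vanishes automatically, and in the remaining type~II sub-case the normalisation $|\mt U_1\hat{\vc e}|=1$ yields the missing relation among the components of $\vc m$. Verifying the displayed expression for $g(\tfrac12)$ is of the same nature but shorter; everything else is linear algebra together with the Ball--James reduction.
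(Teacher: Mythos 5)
The paper does not actually prove this theorem: it is recalled verbatim from \cite[Thm.~2]{JamesHyst}, so there is no in-text argument to compare yours against. Evaluated on its own terms, your argument is sound and, as far as I can tell, mirrors the strategy of the cited source: reduce to the Ball--James eigenvalue criterion for rank-one connectivity with $SO(3)$, rephrased as $\det(\mt C_\lambda-\mt 1)=0$ together with $\tr\cof(\mt C_\lambda-\mt 1)\leq 0$; observe, via Sherman--Morrison and $\det\mt F_\lambda\equiv\det\mt U_1$ (which uses that $\mt U_1,\mt U_2$ share eigenvalues, a consequence of~\eqref{e hat}), that both $f$ and $g$ are quadratics in $\lambda$ with $f(0)=f(1)$ and $g(0)=g(1)$; and then read off \textbf{(CC1)}, \textbf{(CC2)}, \textbf{(CC3)} from, respectively, the $\lambda=0$ rank-one criterion, the vanishing of the leading coefficient of $f$, and $g(\tfrac12)\leq 0$. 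Your formula $g(\tfrac12)=\tfrac14|\vc b|^2|\vc m|^2-(\tr\mt U_1^2-\det\mt U_1^2-2)$ checks out.

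One remark on the step you flag as the only ``non-bookkeeping'' one. For a quadratic $f$ with $f(0)=f(1)$, the leading coefficient equals $-f'(0)$, so its vanishing is equivalent to $f'(0)=0$; and $f'(0)$ is a one-line computation,
$$
f'(0)=\tr\Bigl[\cof(\mt U_1^2-\mt 1)\bigl(\mt U_1\vc b\otimes\vc m+\vc m\otimes\mt U_1\vc b\bigr)\Bigr]=2\,\vc b\cdot\mt U_1\cof(\mt U_1^2-\mt 1)\vc m,
$$
using that $\mt U_1$ and $\cof(\mt U_1^2-\mt 1)$ are symmetric. So the identification of the leading coefficient of $f$ with \textbf{(CC2)} is just as mechanical as the rest of the proof, requires neither diagonalisation of $\mt U_1$ nor a type~I/type~II case split, and in fact does not use \textbf{(CC1)}. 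This is essentially the calculation of $f_i'(0)$ that the paper carries out inside the proof of Proposition~\ref{nuovorisultatone}, which you could cite to shorten your argument.
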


In the last part of this section, we report some results from \cite{JamesHyst} related to the cofactor conditions in type I/II twins.
\begin{theorem}[{\cite[Thm. 7]{JamesHyst}}]
\label{CC for I}
Let $\mt U_1,\mt U_2\in \R^{3\times3}_{Sym^+}$ be distinct and such that $\hat {\mt R}\in SO(3)$, $\vc b_{I},\vc m_{I}\in\R^3$ is a type I solution to \eqref{compatib condit}. Suppose further that $\mt U_1,\vc b_{I},\vc m_{I}$ satisfy the cofactor conditions. Then, there exist $\mt R_0\in SO(3)$, $\vc a_0\in\R^3$, $\vc n_0,\vc n_1 \in \mathbb{S}^2$ and $\xi\neq 0$ such that
%
\beq
\label{thm cc1 a}
\mt R_0 \mt U_1= \mt {1} + \vc a_0\otimes \vc n_0,\qquad  \mt R_0 ( \mt U_1 + \vc b_I\otimes \vc m_I)= \mt {1} + \vc a_0\otimes \xi \vc n_1.
\eeq
Furthermore,
\beq
\label{forall lambda}
\mt R_0 [\mt  U_1 +\lambda \vc b_I\otimes\vc  m_I]= \mt {1} + \vc a_0\otimes \bigr(\lambda \xi \vc n_1+(1-\lambda)\vc n_0\bigl),\qquad  \text{for all } \lambda\in[0,1].
\eeq
\end{theorem}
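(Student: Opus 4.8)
The plan is to reduce everything to the classical two-variant compatibility analysis for a type I twin and then to extract, from the cofactor conditions, the extra structure that makes both wells $SO(3)\mt U_1$ and $SO(3)(\mt U_1+\vc b_I\otimes\vc m_I)$ rank-one connected to the identity with a \emph{common} vector $\vc a_0$. First I would recall that since $\hat{\mt R}\mt U_2 = \mt U_1+\vc b_I\otimes\vc m_I$ with $\vc m_I=\hat{\vc e}$ (the type I normal from \eqref{10}), the averaged gradient $\mt U_1+\lambda\vc b_I\otimes\vc m_I$ is, for each $\lambda$, exactly the laminate gradient appearing in \eqref{auste marte}. By Theorem \ref{thm cof cond}, the cofactor conditions (CC1)--(CC3) guarantee that for every $\lambda\in[0,1]$ there exist $\mt R_\lambda\in SO(3)$, $\vc a_\lambda,\vc n_\lambda\in\R^3$ with $\mt R_\lambda[\mt U_1+\lambda\vc b_I\otimes\vc m_I]-\mt 1 = \vc a_\lambda\otimes\vc n_\lambda$. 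So the content of the theorem is not existence of rank-one connections — that is given — but rather that one can choose the family $(\mt R_\lambda,\vc a_\lambda,\vc n_\lambda)$ so that $\mt R_\lambda\equiv\mt R_0$ is \emph{independent of $\lambda$} and $\vc a_\lambda\equiv\vc a_0$ is \emph{independent of $\lambda$}, with only the normal $\vc n_\lambda = \lambda\xi\vc n_1+(1-\lambda)\vc n_0$ varying affinely.

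The mechanism I would use is the standard ``twin plus austenite'' equation solved simultaneously for the two endpoints. Set $\mt A:=\mt R_0\mt U_1$ and $\mt A':=\mt R_0(\mt U_1+\vc b_I\otimes\vc m_I)$ for a rotation $\mt R_0$ to be determined. We want $\mt A-\mt 1=\vc a_0\otimes\vc n_0$ and $\mt A'-\mt 1=\vc a_0\otimes\xi\vc n_1$ with the \emph{same} $\vc a_0$. Subtracting gives $\mt A'-\mt A = \mt R_0(\vc b_I\otimes\vc m_I) = (\mt R_0\vc b_I)\otimes\vc m_I = \vc a_0\otimes(\xi\vc n_1-\vc n_0)$; so automatically the left factor of $\mt A'-\mt A$ is $\mt R_0\vc b_I$, and for a common $\vc a_0$ to exist we need $\vc a_0\parallel\mt R_0\vc b_I$, i.e.\ $\vc a_0 = c\,\mt R_0\vc b_I$ for a scalar $c$, while $\xi\vc n_1-\vc n_0 = c^{-1}\vc m_I$. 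Then \eqref{forall lambda} follows \emph{for free} by linearity: $\mt R_0[\mt U_1+\lambda\vc b_I\otimes\vc m_I]-\mt 1 = \vc a_0\otimes\vc n_0 + \lambda(\mt R_0\vc b_I)\otimes\vc m_I = \vc a_0\otimes\vc n_0 + \lambda c^{-1}\vc a_0\otimes\vc m_I = \vc a_0\otimes(\vc n_0+\lambda(\xi\vc n_1-\vc n_0)) = \vc a_0\otimes(\lambda\xi\vc n_1+(1-\lambda)\vc n_0)$, which is exactly the claimed form once we know $\vc n_0$ and $\xi\vc n_1$ are unit vectors. So the whole theorem collapses to producing one rotation $\mt R_0$ such that \emph{both} $\mt R_0\mt U_1$ and $\mt R_0\mt U_2' := \mt R_0(\mt U_1+\vc b_I\otimes\vc m_I)$ are of the form $\mt 1+(\text{vector})\otimes(\text{unit vector})$ with the two vectors parallel to each other (equivalently, both parallel to $\mt R_0\vc b_I$).

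To construct $\mt R_0$ I would use (CC1): $\lambda_2=1$ means $\mt U_1$ has $1$ as its middle eigenvalue, hence $\mt U_1^2-\mt 1$ is an indefinite symmetric matrix of signature $(+,0,-)$, and there is a classical fact (the Ball--James construction, see \cite{BallJames1}) that $SO(3)\mt U_1$ is rank-one connected to $SO(3)$ precisely when $\lambda_2=1$: there exist $\mt R_0\in SO(3)$, $\vc a_0\in\R^3$, unit $\vc n_0$ with $\mt R_0\mt U_1 = \mt 1+\vc a_0\otimes\vc n_0$, and in fact there are exactly two such solutions. The task is then to check that one can pick the solution $(\mt R_0,\vc a_0,\vc n_0)$ for the $\mt U_1$-austenite interface so that the \emph{same} $\mt R_0$ also resolves the $(\mt U_1+\vc b_I\otimes\vc m_I)$-austenite interface with a parallel $\vc a$. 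Here I expect to use (CC2), $\vc b_I\cdot\mt U_1\cof(\mt U_1^2-\mt 1)\vc m_I=0$, which is exactly the algebraic condition ensuring $\mt U_1+\vc b_I\otimes\vc m_I$ also has middle singular value $1$ (so that it too is rank-one connected to $SO(3)$), together with a computation showing the rotation can be chosen to match; (CC3) controls positivity of the relevant discriminant so that the parameters $\xi,c$ are real and $\vc n_1$ is a genuine unit vector. Concretely I would: (i) diagonalise and write down the two explicit solution pairs for $SO(3)\mt U_1$ vs.\ $SO(3)$; (ii) for each candidate $\mt R_0$, compute $\mt R_0(\mt U_1+\vc b_I\otimes\vc m_I)-\mt 1$ and verify, using (CC2)--(CC3), that it equals $(\mt R_0\vc b_I/c)\otimes(\text{unit vector})$ shifted by $\vc a_0\otimes\vc n_0$ — i.e.\ that its ``non-$\vc a_0$ part'' is indeed a rank-one term with left vector along $\mt R_0\vc b_I$; (iii) read off $\xi$, $\vc n_1$, $c$ and set $\vc a_0 = c\,\mt R_0\vc b_I$, checking $\xi\neq 0$ (this is where $\mt U_1\neq\mt U_2$, equivalently $\vc b_I\otimes\vc m_I\neq 0$, and (CC3) enter) and $|\vc n_1|=1$.

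The main obstacle, and the step I would spend the most care on, is step (ii): showing that the \emph{single} rotation $\mt R_0$ produced by the $\mt U_1$-to-austenite compatibility simultaneously works for the twinned variant, so that the left vectors genuinely coincide rather than merely the interfaces existing separately. This is precisely the place where the cofactor conditions do real work beyond (CC1), and it is essentially a determinant/cofactor identity: one must show $\cof(\mt R_0(\mt U_1+\vc b_I\otimes\vc m_I)-\mt 1)=\mt 0$ using $\cof(\mt U_1+\vc b_I\otimes\vc m_I) = \cof\mt U_1 + \cof\mt U_1(\vc b_I\otimes\vc m_I)^{\text{-type correction}}$ expansions plus (CC2). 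I would anticipate that the cleanest route is to follow the algebraic normal form already developed in \cite{JamesHyst} for Theorem \ref{thm cof cond} — the vectors $\vc a,\vc n$ there are given by explicit formulas in $\lambda$, and showing $\vc a(\lambda)$ is a $\lambda$-independent direction and $\vc n(\lambda)$ affine is then a direct (if slightly tedious) manipulation of those formulas specialised to the type I normal $\vc m_I=\hat{\vc e}$. Everything else — the ``for all $\lambda$'' statement \eqref{forall lambda}, the parallelism of $\vc a_0$ and $\mt R_0\vc b_I$ — is then immediate linear algebra as sketched above.
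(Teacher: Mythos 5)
The paper itself gives no proof of this theorem: it is quoted verbatim as \cite[Thm.~7]{JamesHyst} and used as a black box, so there is no ``paper proof'' to compare your attempt against. What can be done is to assess whether your outline would actually produce a proof.

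Your reduction is the right one, and the linearity observation is correct and useful: once the two endpoint identities in \eqref{thm cc1 a} hold with a common $\vc a_0$, the whole family \eqref{forall lambda} is indeed automatic, since
$\mt R_0[\mt U_1+\lambda\vc b_I\otimes\vc m_I]-\mt 1 = \vc a_0\otimes\vc n_0 + \lambda(\mt R_0\vc b_I)\otimes\vc m_I$, and $\mt R_0\vc b_I\parallel\vc a_0$ together with $\vc m_I\parallel(\xi\vc n_1-\vc n_0)$ collapses the right-hand side to $\vc a_0\otimes\bigl(\lambda\xi\vc n_1+(1-\lambda)\vc n_0\bigr)$. So the whole theorem does reduce to one identity: $\mt R_0\vc b_I\parallel\vc a_0$, where $(\mt R_0,\vc a_0,\vc n_0)$ is a solution of $\mt R_0\mt U_1 = \mt 1+\vc a_0\otimes\vc n_0$.

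Where the outline goes astray is in how you assign work to the cofactor conditions, and this matters because it affects what you would actually have to prove in step~(ii). You write that (CC2) is ``exactly the algebraic condition ensuring $\mt U_1+\vc b_I\otimes\vc m_I$ also has middle singular value $1$.'' It is not: $\mt U_1+\vc b_I\otimes\vc m_I = \hat{\mt R}\mt U_2$, whose singular values are the eigenvalues of $\mt U_2$, and variants $\mt U_1,\mt U_2$ have the same spectrum, so (CC1) alone already gives both endpoints a rank-one connection with $SO(3)$. (CC2) is, instead, the condition forcing $f(\lambda):=\det\bigl((\mt U_1+\lambda\vc b_I\otimes\vc m_I)^T(\mt U_1+\lambda\vc b_I\otimes\vc m_I)-\mt 1\bigr)$ to vanish identically (since $f$ is quadratic, $f(0)=0$ by (CC1), $f(\lambda)=f(1-\lambda)$, and $f'(0)$ is, up to a factor, $\vc b_I\cdot\mt U_1\cof(\mt U_1^2-\mt 1)\vc m_I$), with (CC3) making $1$ the \emph{middle} eigenvalue for the intermediate $\lambda$; this is exactly the computation the paper reuses in the proof of Proposition~\ref{nuovorisultatone}. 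More importantly, the criterion you propose for step (ii), namely $\cof\bigl(\mt R_0(\mt U_1+\vc b_I\otimes\vc m_I)-\mt 1\bigr)=\mt 0$, only gives rank $\le 1$; a rank-one sum $\vc a_0\otimes\vc n_0 + (\mt R_0\vc b_I)\otimes\vc m_I$ can equally well be achieved by $\vc n_0\parallel\vc m_I$ (which is the type~II configuration of Theorem~\ref{CC for II}), so that criterion does not by itself isolate the type~I conclusion $\mt R_0\vc b_I\parallel\vc a_0$. The genuinely hard step — exploiting the explicit type~I formulas \eqref{10} to prove that the left vector of the $\mt U_1$–austenite rank-one connection picked out by (CC1) is actually parallel to $\mt R_0\vc b_I$ — is acknowledged but not carried out in your outline, and as written the cofactor-based criterion you offer for it would not distinguish the type~I from the type~II outcome. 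That is the content you would still need to supply.
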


\begin{theorem}[{\cite[Thm. 8]{JamesHyst}}]
\label{CC for II}
Let $\mt U_1,\mt U_2\in \R^{3\times3}_{Sym^+}$ be distinct and such that $\hat{\mt  R}\in SO(3)$, $\vc b_{II},\vc m_{II}\in\R^3$ is a type II solution to \eqref{compatib condit}. Suppose further that $\mt U_1,\vc b_{II},\vc m_{II}$ satisfy the cofactor conditions. Then, there exist $\mt R_0\in SO(3)$, $\vc a_0,\vc a_1\in\R^3$, $\vc n_0\in \mathbb{S}^2$ and $\xi\neq 0$ such that
%
\beq
\label{thm cc1 b}
\mt R_0 \mt U_1= \mt {1} + \vc a_0\otimes\vc n_0,\qquad \mt R_0 ( \mt U_1 + \vc b_{II}\otimes\vc  m_{II})= \mt {1} + \xi \vc a_1 \otimes \vc n_0.
\eeq
Furthermore, 
\beq
\label{forall lambda 2}
\mt R_0 [ \mt U_1 +\lambda \vc b_{II}\otimes \vc m_{II}]= \mt{1} + \bigr(\lambda \xi \vc a_1+(1-\lambda)\vc  a_0\bigl)\otimes \vc n_0
,\qquad \text{for all } \lambda \in[0,1].
\eeq
\end{theorem}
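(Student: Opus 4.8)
The plan is to prove Theorem~\ref{CC for II} directly from the cofactor conditions, in a way dual to (in fact the transpose of) the argument behind Theorem~\ref{CC for I}: there the twinning plane normal $\vc m_I=\hat{\vc e}$ is itself an austenite interface normal and one shows the other interface vector stays parallel to $\vc a_0$; here I would show instead that the type II direction $\vc m_{II}$ is parallel to one of the two austenite-compatible normals of $\mt U_1$, which immediately freezes the interface normal at $\vc n_0$ and lets the left vector vary affinely. Since $\mt U_1,\mt U_2$ have the same eigenvalues, by \textbf{(CC1)} the middle eigenvalue of $\mt U_1$ is $1$; write $\lambda_1<1<\lambda_3$ for the other two eigenvalues (the degenerate cases $\lambda_1=1$ or $\lambda_3=1$ excluded for now) and $\hat{\vc v}_1,\hat{\vc v}_2,\hat{\vc v}_3$ for an orthonormal eigenbasis. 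The classical criterion for rank-one compatibility with $SO(3)$ (see \cite{BallJames1}) produces exactly two pairs $(\mt R_0^{\pm},\vc a_0^{\pm}\otimes\vc n_0^{\pm})$ with $\mt R_0^{\pm}\mt U_1=\mt 1+\vc a_0^{\pm}\otimes\vc n_0^{\pm}$, and in both of them $\vc a_0^{\pm},\vc n_0^{\pm}\perp\hat{\vc v}_2$, with $\vc n_0^{\pm}$ proportional to $-\sqrt{1-\lambda_1^2}\,\hat{\vc v}_1\pm\sqrt{\lambda_3^2-1}\,\hat{\vc v}_3$.

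The core step is to pin down $\vc m_{II}$. By \textbf{(CC1)} the matrix $\mt U_1^2-\mt 1$ has rank two with $\hat{\vc v}_2$ spanning its kernel, hence $\cof(\mt U_1^2-\mt 1)=(\lambda_1^2-1)(\lambda_3^2-1)\,\hat{\vc v}_2\otimes\hat{\vc v}_2$; using $\mt U_1\hat{\vc v}_2=\hat{\vc v}_2$, condition \textbf{(CC2)} then collapses to $(\vc b_{II}\cdot\hat{\vc v}_2)(\hat{\vc v}_2\cdot\vc m_{II})=0$. Plugging in $\vc b_{II}=\mt U_1\hat{\vc e}$ and $\vc m_{II}=2\bigl(\hat{\vc e}-\mt U_1^2\hat{\vc e}/|\mt U_1\hat{\vc e}|^2\bigr)$ from \eqref{11}, both factors come out proportional to $\hat{\vc e}\cdot\hat{\vc v}_2$, so \textbf{(CC2)} reads $(\hat{\vc e}\cdot\hat{\vc v}_2)^2\bigl(1-|\mt U_1\hat{\vc e}|^{-2}\bigr)=0$. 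Because the solution is not compound, Proposition~\ref{comp dom char} prevents $\hat{\vc e}$ from being orthogonal to the eigenvector $\hat{\vc v}_2$, so $\hat{\vc e}\cdot\hat{\vc v}_2\neq0$ and therefore $|\mt U_1\hat{\vc e}|=1$. From this single identity everything follows: $\vc m_{II}\cdot\hat{\vc e}=2\bigl(|\hat{\vc e}|^2-|\mt U_1\hat{\vc e}|^2/|\mt U_1\hat{\vc e}|^2\bigr)=0$ holds automatically, while $|\mt U_1\hat{\vc e}|=1$ gives $\vc m_{II}\cdot\hat{\vc v}_2=0$, so $\vc m_{II}\parallel\hat{\vc e}\times\hat{\vc v}_2=-(\hat{\vc e}\cdot\hat{\vc v}_3)\,\hat{\vc v}_1+(\hat{\vc e}\cdot\hat{\vc v}_1)\,\hat{\vc v}_3$; and expanding $|\mt U_1\hat{\vc e}|^2=|\hat{\vc e}|^2$ in the eigenbasis gives $(\lambda_3^2-1)(\hat{\vc e}\cdot\hat{\vc v}_3)^2=(1-\lambda_1^2)(\hat{\vc e}\cdot\hat{\vc v}_1)^2$, which is exactly the proportion making $\hat{\vc e}\times\hat{\vc v}_2$ parallel to one of the two $\vc n_0^{\pm}$. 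No use is made of \textbf{(CC3)}: for non-compound type II twins it is implied by \textbf{(CC1)}--\textbf{(CC2)}, as the same expansion shows.

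Fix that sign, so $\vc m_{II}=\gamma\,\vc n_0$ with $\gamma\neq0$, and set $\mt R_0:=\mt R_0^{\pm}$, $\vc a_0:=\vc a_0^{\pm}$, $\vc n_0:=\vc n_0^{\pm}$. Then for every $\lambda\in[0,1]$
\[
\mt R_0\bigl[\mt U_1+\lambda\,\vc b_{II}\otimes\vc m_{II}\bigr]=\mt 1+\vc a_0\otimes\vc n_0+\lambda\gamma\,(\mt R_0\vc b_{II})\otimes\vc n_0=\mt 1+\bigl(\vc a_0+\lambda\gamma\,\mt R_0\vc b_{II}\bigr)\otimes\vc n_0 .
\]
Since $\mt U_1+\vc b_{II}\otimes\vc m_{II}=\hat{\mt R}\mt U_2\notin SO(3)$, the vector $\vc a_0+\gamma\,\mt R_0\vc b_{II}$ is nonzero; choosing any $\xi\neq0$ and putting $\vc a_1:=\xi^{-1}(\vc a_0+\gamma\,\mt R_0\vc b_{II})$ turns the right-hand side into $\mt 1+\bigl(\lambda\xi\vc a_1+(1-\lambda)\vc a_0\bigr)\otimes\vc n_0$, which is \eqref{forall lambda 2}, and the endpoints $\lambda=0,1$ give \eqref{thm cc1 b}. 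Note that, unlike the proof of the rigidity results later in the paper, this argument needs only \textbf{(CC1)}, \textbf{(CC2)}, Proposition~\ref{comp dom char} and the classical rank-one criterion, not Theorem~\ref{thm cof cond}.

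I expect the main obstacle to be the identification of $\vc m_{II}$ with an austenite-compatible normal of $\mt U_1$: squeezing $|\mt U_1\hat{\vc e}|=1$ out of \textbf{(CC2)} requires combining the rank-one form of $\cof(\mt U_1^2-\mt 1)$ with the non-compound hypothesis through Proposition~\ref{comp dom char}, and matching $\hat{\vc e}\times\hat{\vc v}_2$ to the explicit Ball--James normal is sign- and normalization-sensitive (one must keep careful track of the eigenvector orientations). In addition, the degenerate configurations in which $\mt U_1$ has a repeated eigenvalue ($\lambda_1=1$ or $\lambda_3=1$), where $\cof(\mt U_1^2-\mt 1)$ vanishes and \textbf{(CC2)} becomes vacuous, must be disposed of separately. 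Once $\vc m_{II}\parallel\vc n_0$ is established, the remainder is the elementary computation displayed above.
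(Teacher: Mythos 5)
The paper does not prove Theorem~\ref{CC for II}; it cites it directly from \cite[Thm.~8]{JamesHyst}, so there is no in-paper argument to compare against. Evaluated on its own, your proof is correct in the non-degenerate case and uses precisely the right mechanism. The computation that reduces \textbf{(CC2)} to $(\hat{\vc e}\cdot\hat{\vc v}_2)^2\bigl(1-|\mt U_1\hat{\vc e}|^{-2}\bigr)=0$ is sound (using $\mt U_1\cof(\mt U_1^2-\mt 1)=(\lambda_1^2-1)(\lambda_3^2-1)\,\hat{\vc v}_2\otimes\hat{\vc v}_2$), Proposition~\ref{comp dom char} correctly kills the factor $\hat{\vc e}\cdot\hat{\vc v}_2$, and the identity $(\lambda_3^2-1)(\hat{\vc e}\cdot\hat{\vc v}_3)^2=(1-\lambda_1^2)(\hat{\vc e}\cdot\hat{\vc v}_1)^2$ together with $\vc m_{II}\perp\hat{\vc e},\hat{\vc v}_2$ indeed pins $\vc m_{II}$ to one of the two Ball--James habit-plane normals. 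The claim $\vc a_0^\pm\perp\hat{\vc v}_2$ that you use implicitly (so that $\vc a_0+\lambda\gamma\,\mt R_0\vc b_{II}$ stays in the right plane) can be checked as follows: since $\vc n_0^\pm\perp\hat{\vc v}_2$ and $\mt U_1\hat{\vc v}_2=\hat{\vc v}_2$, the relation $\mt R_0\mt U_1=\mt 1+\vc a_0\otimes\vc n_0$ forces $\mt R_0\hat{\vc v}_2=\hat{\vc v}_2$, and then contracting with $\hat{\vc v}_2$ on the left gives $\vc a_0\cdot\hat{\vc v}_2=0$. The final algebraic step (once $\vc m_{II}=\gamma\vc n_0$) and the non-vanishing of $\vc a_0+\gamma\mt R_0\vc b_{II}$ are both handled correctly. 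Two caveats: (i) you rightly flag the repeated-eigenvalue cases $\lambda_1=1$ or $\lambda_3=1$ as needing a separate argument; these should indeed be ruled out or handled (generically, under the distinctness and non-compound hypotheses, they do not occur, but this deserves a sentence). (ii) Your parenthetical claim that \textbf{(CC3)} is implied by \textbf{(CC1)}--\textbf{(CC2)} for non-compound type~II twins is not needed for the theorem and is asserted without proof; as written it is a loose end rather than an error, since the theorem already assumes all three conditions. Overall this is a clean, correct, and self-contained reconstruction of the standard argument.
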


\subsection{Some preliminaries on $k$-rectifiable sets}
\label{Some preliminaries on $k$-rectifiable sets}
In this section we recall some standard results on Lipschitz functions and $k$-rectifiable sets from \cite{AFP,Federer,Morgan} (see also \cite{ABC} for properties of level sets of Lipschitz functions). We denote by $\mathscr H^k$ the $k$-dimensional Hausdorff measure, and write $\mathscr H^k\mres E$ for its restriction to an $\mathscr H^k$ measurable subset $E$. $C_c(\R^d)$ stands for the space of continuous functions with compact support in $\R^d$, while $B^d(\vc x,r)$ denotes the $d$-dimensional ball centred at $x$, with radius $r$ and of volume $\omega_dr^d$. 
We start with the following definitions:
{
\begin{definition}
A Lipschitz $k$-graph $\mathcal G$ is a set of points in $\R^d$ with $d>k$ such that there exists an open and connected set $\omega\subset\R^k$, a Lipschitz map $\boldsymbol\psi\colon \omega\to\R^{d-k}$, and a rotation $\mt Q\in SO(d)$ satisfying
$$
\mathcal G:=\bigl\{
\mt Q\vc x,\, \vc x = (\vc x',\boldsymbol\psi(\vc x')),\,\vc x'\in\omega	\bigr\}.
$$
\end{definition}
}
\begin{definition}
Let $E\subset\R^d$ be an $\mathscr H^k$-measurable set satisfying $\mathscr H^k(E)<\infty$. We say that $E$ is $k$-rectifiable if there exist countably many Lipschitz mappings $\vc f_i\colon\R^k\to\R^d$ such that
$$
\mathscr H^k\Bigl( E\setminus\bigcup_{i=1}^\infty \vc f_i(\R^k)\Bigr)=0.
$$
\end{definition}
An equivalent characterization for such sets is given by the following result:
\begin{proposition}[{\cite[Prop. 2.76]{AFP}}]
\label{SIPUO}
Any $\mathscr H^k$-measurable set $E$ is countably $\mathscr H^k$-rectifiable if and only if there exist countably many Lipschitz $k$-graphs $\mathcal G_i\subset\R^N$, such that 
$$
\mathscr H^k\Bigl(E\setminus\bigcup^\infty_{i=1}\mathcal G_i\Bigr)=0.
$$
\end{proposition}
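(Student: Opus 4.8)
The statement to prove is Proposition \ref{SIPUO}: an $\mathscr H^k$-measurable set $E$ with $\mathscr H^k(E) < \infty$ is countably $\mathscr H^k$-rectifiable if and only if it can be covered, up to an $\mathscr H^k$-null set, by countably many Lipschitz $k$-graphs.

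\textbf{Proof plan.}

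The plan is to prove the two implications separately, with the reverse implication being essentially immediate and the forward implication requiring the real work.

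For the ``if'' direction, suppose $E$ is covered up to an $\mathscr H^k$-null set by countably many Lipschitz $k$-graphs $\mathcal G_i$. Each $\mathcal G_i$ is by definition the image under a rotation $\mt Q_i \in SO(d)$ of the graph $\{(\vc x', \boldsymbol\psi_i(\vc x')) : \vc x' \in \omega_i\}$ of a Lipschitz map $\boldsymbol\psi_i \colon \omega_i \to \R^{d-k}$. First I would extend $\boldsymbol\psi_i$ to a Lipschitz map on all of $\R^k$ (Kirszbraun's theorem, or a componentwise McShane extension, both of which may be quoted from \cite{AFP,Federer}), then define $\vc f_i \colon \R^k \to \R^d$ by $\vc f_i(\vc x') = \mt Q_i(\vc x', \boldsymbol\psi_i(\vc x'))$. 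This is Lipschitz, and $\vc f_i(\R^k) \supset \mathcal G_i$, so $\mathscr H^k(E \setminus \bigcup_i \vc f_i(\R^k)) \le \mathscr H^k(E \setminus \bigcup_i \mathcal G_i) = 0$, giving rectifiability directly from the definition.

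For the ``only if'' direction, assume $E$ is countably $\mathscr H^k$-rectifiable, so there are Lipschitz $\vc f_i \colon \R^k \to \R^d$ with $\mathscr H^k(E \setminus \bigcup_i \vc f_i(\R^k)) = 0$. It suffices to show that for each fixed $i$, the set $\vc f_i(\R^k)$ can itself be covered up to an $\mathscr H^k$-null set by countably many Lipschitz $k$-graphs; a diagonal/relabelling argument over $i$ then finishes. Fix $\vc f = \vc f_i$. The key step is to decompose the domain $\R^k$ into countably many pieces on each of which $\vc f$ behaves like a bi-Lipschitz graph map. By Rademacher's theorem $\vc f$ is differentiable $\mathscr L^k$-a.e.; let $Z = \{\vc x' : D\vc f(\vc x') \text{ fails to exist or has rank} < k\}$. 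On $Z$ one checks $\mathscr H^k(\vc f(Z)) = 0$ (the rank-deficient part contributes no $\mathscr H^k$ mass — a standard consequence of the area formula or of a direct covering argument — and the non-differentiability set is $\mathscr L^k$-null, hence maps to an $\mathscr H^k$-null set since $\vc f$ is Lipschitz). On $\R^k \setminus Z$, at each point the differential is an injective linear map $\R^k \to \R^d$; using a countable dense family of $k$-planes in $\R^d$ and a countable family of thresholds, I would partition $\R^k \setminus Z$ (up to a null set) into Borel pieces $A_{j}$ such that on each $A_j$: the tangent plane $D\vc f(\vc x')(\R^k)$ is uniformly close to a fixed $k$-plane $\pi_j$, and $\vc f$ restricted to $A_j$ is bi-Lipschitz with the property that $p_{\pi_j} \circ \vc f$ is injective on $A_j$, where $p_{\pi_j}$ is orthogonal projection onto $\pi_j$. (This is the standard ``linearization + Lusin-type'' argument; one can quote the Lipschitz-graph decomposition lemma of \cite{Federer,AFP} here rather than redo it.) Then $\vc f(A_j)$ is the graph, over (a subset of) $\pi_j \cong \R^k$, of the Lipschitz function $(p_{\pi_j} \circ \vc f|_{A_j})^{-1}$ composed with $p_{\pi_j^\perp} \circ \vc f$; after choosing $\mt Q_j \in SO(d)$ mapping $\R^k \times \{0\}$ onto $\pi_j$ and extending the resulting map Lipschitz-ly to all of $\R^k$ as above, $\vc f(A_j)$ is contained in a Lipschitz $k$-graph. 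Countably many such graphs, together with the null sets $\vc f(Z)$ and $E \setminus \bigcup_i \vc f_i(\R^k)$, cover $E$ up to $\mathscr H^k$-measure zero.

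\textbf{Main obstacle.} The only genuinely nontrivial point is the decomposition of the domain of each Lipschitz map into countably many pieces on which the map is a bi-Lipschitz graph over a hyperplane — i.e., showing one may pass from ``image of a Lipschitz map'' to ``union of honest Lipschitz graphs.'' The technical heart is controlling the two failure modes (non-differentiability, which is handled by the Lipschitz bound carrying $\mathscr L^k$-null sets to $\mathscr H^k$-null sets, and rank deficiency, which is handled by showing such points carry no $\mathscr H^k$ mass) and then, on the good set, using compactness of the Grassmannian to reduce to finitely-many-directions-at-a-time so that a quantitative inverse function / injectivity-of-projection argument applies uniformly on each piece. Since the paper only needs this as a cited preliminary, I would present the argument at the level of an invocation of the classical structure theory (Federer 3.2.18--3.2.19, or \cite[§2.9]{AFP}) with the key steps indicated as above, rather than reproving the full machinery.
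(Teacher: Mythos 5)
The paper does not prove Proposition \ref{SIPUO}: it is quoted verbatim from \cite[Prop.\ 2.76]{AFP} as a preliminary, with no proof given. Your reconstruction is a correct outline of the standard argument found in \cite{AFP} and in Federer (rank-deficient points carry no $\mathscr H^k$-mass via the area formula, the non-differentiability set is $\mathscr L^k$-null and hence has Lipschitz-image of $\mathscr H^k$-measure zero, a Lusin/Grassmannian-compactness decomposition linearises the map on countably many pieces into bi-Lipschitz graph maps over fixed $k$-planes, and the converse direction is immediate after a Kirszbraun or McShane extension), and you are right that the genuine content is the domain decomposition lemma, which is appropriately cited rather than reproved. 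Since the paper itself offers no proof here, there is nothing to compare against; your sketch is sound and suitably calibrated for a cited preliminary.
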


In what follows, a particular case of \cite[Theorem 3.2.22]{Federer} is also used:
\begin{theorem}
\label{Fed thm}
Let $\Omega\subset \R^3$ be open, bounded and connected, $\mathcal Z\subset \R^3$ be a $1$-rectifiable set and $\vc f\colon \Omega\to \mathcal Z$ a Lipschitz function. Define the $1$-dimensional Jacobian of $\vc f$ by:
$$
J_1 \vc f:=\sqrt{\sum_{i,j}(\nabla\vc f)_{ij}^2}.
$$
Then:
\begin{itemize}
\item for $\mathscr{L}^3$ almost every $\vc x\in\Omega$, either $J_1\vc f(\vc x) = 0$, or the image of $\nabla\vc f(x)$ is a $1$-dimensional vector space, i.e., $\rank \nabla\vc f(\vc x)\leq 1$, $\mathscr L^3$-almost everywhere in $\Omega$;
\item for $\mathscr H^1$ almost all $\boldsymbol\xi\in\mathcal Z$, $\vc f^{-1}(\boldsymbol\xi)$ is $2$-rectifiable;
\item for every integrable $g\colon\Omega\to [-\infty,\infty]$
\beq
\label{coarea}
\int_\Omega g(\vc x)J_1\vc f(\vc x)\,\mathrm{d}\vc x
=\int_{\mathcal Z}\int_{\vc f^{-1}(\boldsymbol\xi)\cap\Omega}g(\vc s)\,\mathrm{d}\mathscr H^2(\vc s)\, \mathrm{d}\mathscr H^1(\boldsymbol\xi)
\eeq
\end{itemize} 
\end{theorem}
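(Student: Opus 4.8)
\emph{Proof strategy.} The statement is Federer's coarea formula for a Lipschitz map whose target is a $1$-rectifiable set, and the plan is to reduce it to the classical scalar coarea formula for Lipschitz functions $\R^3\to\R$ by flattening $\mathcal Z$ chart by chart. Since $\mathcal Z$ is $1$-rectifiable with $\mathscr H^1(\mathcal Z)<\infty$, Proposition~\ref{SIPUO} covers $\mathcal Z$ up to an $\mathscr H^1$-null set by countably many Lipschitz $1$-graphs; after refining the cover and reparametrising by arclength I may take these to be the images of injective bi-Lipschitz curves $\gamma_k\colon I_k\to\R^3$, $I_k\subset\R$, with $|\gamma_k'|=1$ a.e. Disjointifying gives $\mathcal Z=N_0\sqcup\bigsqcup_k E_k$ with $\mathscr H^1(N_0)=0$ and $E_k\subset\gamma_k(I_k)$ Borel. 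I then set $A_k:=\gamma_k^{-1}(E_k)$, $U_k:=\vc f^{-1}(E_k)$, $\Omega_0:=\vc f^{-1}(N_0)$, so that $\Omega=\Omega_0\sqcup\bigsqcup_k U_k$, and on $U_k$ I introduce the scalar Lipschitz function $h_k:=\gamma_k^{-1}\circ\vc f$ (Lipschitz since $\gamma_k^{-1}$ is), which I extend to a Lipschitz $\bar h_k\colon\R^3\to\R$; note $\vc f=\gamma_k\circ h_k$ on $U_k$.

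The workhorse is the elementary lemma: if $u\colon\R^3\to\R$ is Lipschitz and $S\subset\R$ is $\mathscr L^1$-null, then $\nabla u=0$ at $\mathscr L^3$-a.e.\ point of $u^{-1}(S)$ --- immediate from $\int_{u^{-1}(S)}|\nabla u|\,\mathrm d\vc x=\int_S\mathscr H^2(u^{-1}(s))\,\mathrm ds=0$. I would use it twice. First, applied to each component $f_i$ of $\vc f$ and to $S=\pi_i(N_0)$ (the $i$-th coordinate projection of $N_0$, which is $\mathscr L^1$-null because $\pi_i$ is $1$-Lipschitz), together with $f_i(\Omega_0)\subset\pi_i(N_0)$, it gives $\nabla\vc f=0$ a.e.\ on $\Omega_0$. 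Second, applied to $\bar h_k$ and to the $\mathscr L^1$-null set where $\gamma_k$ is not differentiable, it lets the classical chain rule for $\vc f=\gamma_k\circ h_k$ go through at a.e.\ point of $U_k$, yielding $\nabla\vc f(\vc x)=\gamma_k'(h_k(\vc x))\otimes\nabla h_k(\vc x)$ a.e.\ on $U_k$ (interpreted as $0$ where $\nabla h_k=0$, using the a.e.\ bound $|\nabla\vc f|\le\mathrm{Lip}(\gamma_k)\,|\nabla h_k|$ on $U_k$); alternatively the non-differentiability of $\gamma_k$ is avoided altogether by invoking the $C^1$-rectifiability refinement. This already gives the first bullet --- a.e.\ on $\Omega$, $\nabla\vc f$ is $0$ or rank one --- and shows $J_1\vc f=\|\nabla\vc f\|_F=|\gamma_k'(h_k(\vc x))|\,|\nabla h_k(\vc x)|=|\nabla h_k(\vc x)|$ a.e.\ on $U_k$, while $J_1\vc f=0$ a.e.\ on $\Omega_0$.

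For the remaining two bullets I work on a fixed $U_k$. The classical coarea formula for $\bar h_k$ with integrand $g\chi_{U_k}$ gives that $\bar h_k^{-1}(t)\cap U_k$ is $2$-rectifiable for $\mathscr L^1$-a.e.\ $t$, and
\beq
\int_{U_k}g\,J_1\vc f\,\mathrm d\vc x=\int_{U_k}g\,|\nabla h_k|\,\mathrm d\vc x=\int_{\R}\int_{\bar h_k^{-1}(t)\cap U_k}g\,\mathrm d\mathscr H^2\,\mathrm dt .
\eeq
Because the $E_k$ are pairwise disjoint, $\bar h_k^{-1}(t)\cap U_k=\vc f^{-1}(\gamma_k(t))$ when $t\in A_k$ and is empty otherwise, while $\vc f^{-1}(\boldsymbol\xi)\subset U_k$ for every $\boldsymbol\xi\in E_k$. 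The change of variables $\boldsymbol\xi=\gamma_k(t)$ --- that is, the area formula for the injective Lipschitz curve $\gamma_k$, whose $1$-Jacobian is $|\gamma_k'|\equiv1$ --- then converts the last display into $\int_{E_k}\int_{\vc f^{-1}(\boldsymbol\xi)}g\,\mathrm d\mathscr H^2\,\mathrm d\mathscr H^1(\boldsymbol\xi)$, with $\vc f^{-1}(\boldsymbol\xi)$ being $2$-rectifiable for $\mathscr H^1$-a.e.\ $\boldsymbol\xi\in E_k$. Summing over $k$, and discarding $\Omega_0$ on the left (where $J_1\vc f=0$ a.e.) and $N_0$ on the right (where $\mathscr H^1$ vanishes), produces \eqref{coarea} and the second bullet.

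The main obstacle here is not conceptual but organisational: handling the two exceptional sets --- the $\mathscr H^1$-null piece $N_0$ of $\mathcal Z$ and the points of $U_k$ where the composition $\vc f=\gamma_k\circ h_k$ resists naive differentiation --- and the measurability bookkeeping that comes with the chart decomposition (Borel regularity of the pieces $E_k$, $\mathscr H^2$-measurability of the fibres $\vc f^{-1}(\boldsymbol\xi)$, countable additivity in $k$). Both exceptional sets are disposed of by the one-line coarea lemma above, so nothing beyond the classical scalar coarea and area formulas is needed.
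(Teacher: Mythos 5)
The paper itself does not prove this statement; it invokes it as ``a particular case of \cite[Theorem~3.2.22]{Federer}'' and moves on, so there is no in-paper argument to compare against. What you have written is a self-contained derivation that reduces Federer's tangential coarea formula, in this specific $\R^3\to$(1-rectifiable set) setting, to the classical scalar coarea formula by flattening $\mathcal Z$ chart by chart. That reduction is standard and your execution of it is essentially correct: the decomposition $\mathcal Z=N_0\sqcup\bigsqcup_k E_k$ via Proposition~\ref{SIPUO} (with $E_k$ contained in arclength-parametrised bi-Lipschitz curve images), the factorisation $\vc f=\gamma_k\circ h_k$ on $U_k=\vc f^{-1}(E_k)$, the one-line coarea lemma to kill the two exceptional null sets, and the change of variables $t\mapsto\gamma_k(t)$ with unit $1$-Jacobian all fit together to give the three bullets. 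The identification $J_1\vc f=|\nabla h_k|$ on $U_k$ is also correct: once $\nabla\vc f=\gamma_k'(h_k)\otimes\nabla h_k$ with $|\gamma_k'|=1$, the Frobenius norm of a rank-one matrix $\vc a\otimes\vc b$ is $|\vc a||\vc b|$, matching the stated $J_1\vc f$.

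Two small points worth tightening if this were to be written out in full. First, the bound $|\nabla\vc f|\le\mathrm{Lip}(\gamma_k)|\nabla\bar h_k|$ a.e.\ on $U_k$, used to interpret the chain rule where $\nabla\bar h_k$ vanishes, is itself a consequence of the standard fact that two Lipschitz functions agreeing on a measurable set have equal a.e.\ gradients on that set, applied to $\vc f$ and $\gamma_k\circ\bar h_k$; it is cleaner to state that fact explicitly and then argue only about $\gamma_k\circ\bar h_k$, whose differential vanishes wherever $\nabla\bar h_k=0$ simply because $\gamma_k$ is Lipschitz. Second, the disjointification of the graphs generally produces $\mathscr H^1$-measurable rather than Borel pieces $E_k$; to guarantee $U_k=\vc f^{-1}(E_k)$ is $\mathscr L^3$-measurable one should either intersect each graph with a Borel $\sigma$-finite carrier of $\mathscr H^1\mres\mathcal Z$ before disjointifying, or invoke the Lusin--Suslin measurability of images/preimages of Borel sets under Borel maps. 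Neither issue affects the validity of the argument; they are the bookkeeping items you already flagged as ``organisational.'' Compared with citing Federer outright, your route is more elementary (only the scalar coarea and area formulas are used) but less general (it exploits heavily that the target dimension is $1$, where a level set of $h_k$ is a single number rather than a surface); the paper's choice to cite is the more economical one for its purposes.
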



\section{The moving mask assumption}
\label{Sep phase}
{The aim of this section is to give a precise definition of the moving mask assumption (see Definition \ref{defi mi} below), and to frame it in the context of dynamics for nonlinear continuum mechanics. This is done by recalling first the simplified model derived in \cite{FDP1} to describe the evolution of martensitic transformation in the context of nonlinear continuum mechanics. In this framework, we introduce some hypotheses approximating experimental observation. These hypotheses are made precise in Definition \ref{defi mi}. We remark that the model in \cite{FDP1} is used here just to frame the moving mask assumption, and that the rest of the paper relies on Definition \ref{defi mi} only, which could be hence taken by the reader as a standalone assumption. \\
}

In \cite{FDP1} we introduced a continuum model for the evolution of martensitic transformations. After passing to the limit in which the elastic constants tend to infinity and the interface energy density tends to zero, we deduced that the deformation gradients and the temperature field generate in the limit a Young measure $\nu_{\vc x,t}$ (see as a reference \cite{Muller,Pedregal}) and a function $\theta$ satisfying in a suitable sense
\begin{align}
\label{equazPbEv1}
\rho_0\theta_t - d\Delta \theta &=  - \theta_T\frac{\partial}{\partial t}\int_{\R^{3\times3}}\eta_1(\mt A)\,\mathrm{d}\nu_{\vc x,t}(\mt A),\qquad&\text{a.e. in $\Omega\times(0,T)$},\\
\label{equazPbEv2}
\supp \nu_{\vc x, t}&\subset SO(3)\cup K,\qquad  &\text{a.e. in $\Omega\times(0,T)$},
\end{align}
complemented with some initial and boundary conditions. 
Here, $\rho_0$ is the density of the body, $d$ is a diffusivity coefficient which is supposed to be constant, and $\eta_1$ is a smooth function such that 
$$
\eta_1(\mt F) = 0, \quad\text{for all $\mt F\in SO(3)$},\qquad \eta_1(\mt F) = -\frac\alpha{\theta_T}, \quad\text{for all $\mt F\in K$},
$$
for some constant $\alpha>0$ representing the latent heat of the transformation. This system of equations is underdetermined, and should therefore be closed with some constitutive relation between $\int_{\R^{3\times3}}\eta_1(\mt A)\,\mathrm{d}\nu_{\vc x,t}(\mt A)$, and $\theta$ and $\nu_{\vc x,t}$. Nonetheless, we aim to characterise solutions independently of the constitutive relation. In order to do this we introduce some hypotheses on the solutions, that are based on experimental observation and that together we call the \textit{moving mask} approximation, defined precisely in Definition \ref{defi mi} below, using the following ingredients: 
\begin{itemize}
\item the phases are separated, that is there exist open sets $\Omega_A(t),\Omega_M(t)\subset\Omega$ such that
$$
\Omega_A(t)\cap\Omega_M(t)=\varnothing,
\qquad \mathscr{L}^3 \bigl( \Omega\setminus(\Omega_A(t)\cup\Omega_M(t))\bigr)=0, \quad\text{a.e. $t\in(0,T)$},
$$
and
\begin{align*}
\nu_{\vc x,t}(SO(3))= 1,\qquad\text{a.e. $\vc x\in\Omega_A(t)$, a.e. $t\in(0,T)$}, \\
\nu_{\vc x,t}(K)= 1,\qquad\text{a.e. $\vc x\in\Omega_M(t)$, a.e. $t\in(0,T)$}.
\end{align*}
The domain can hence be divided for almost every $t\in(0,T)$ into two regions, the region with martensite $\Omega_M(t)$, and the region with austenite $\Omega_A(t)$. Thus, 
\beq
\label{separato}
\int_{\R^{3\times3}}\eta_1(\mt A)\,\mathrm{d}\nu_{\vc x,t}(\mt A) = -\frac{\alpha}{\theta_T} \chi_{\Omega_M}(\vc x,t),
\eeq
where $\chi_{\Omega_M}(\vc x,t)$ is the characteristic function of $\Omega_M(t)$. The austenite-martensite phase boundary is sharp in this case. 
In terms of macroscopic deformation gradients this reads
\[
\begin{split} &\nabla \vc y (\vc x,t) \in K^{qc},\quad\text{ a.e. in $\Omega_M(t)$, a.e. $t\in(0,T)$},\\ 
&\nabla \vc y (\vc x,t) \in SO(3),\quad\text{ a.e. in $\Omega_A(t)$, a.e. $t\in(0,T)$},\\
\end{split}
\]
\item during the phase transition, the macroscopic deformation gradient remains equal to a constant rotation in the austenite region. This is the case, for example, when the austenite region is connected;
\item the phase interface moves continuously. More precisely, for almost every point $\vc x$ in the domain, there exists a time when $\vc x$ is contained in the phase interface (see also \cite[Remark 5.2]{FDP1});
\item the microstructures do not change after the transformation has happened. This assumption makes particular sense in the context of materials satisfying the cofactor conditions, where austenite and finely twinned martensite can be exactly compatible across interfaces, and even more in Zn\textsubscript{45}Au\textsubscript{30}Cu\textsubscript{25} where the phase transition has very low thermal hysteresis and thermal expansion is hence negligible. 
\end{itemize}
As remarked in the introduction, this construction reflects the idea of a moving mask that uncovers a martensitic microstructure, as can be seen in the video of \cite{JamesNew}. Mathematically we can define the moving mask approximation as follows:
\begin{definition}
\label{defi mi}
{
We say that $\nabla\vc y\in L^\infty(\Omega;\R^{3\times3})$ satisfies the moving mask approximation if 
\begin{itemize}
\item for each $t\in[0,T]$ there exist $\Omega_M(t),\Omega_A(t)\subset\Omega$ disjoint and open, such that $$\mathscr{L}^3 \bigl( \Omega\setminus(\Omega_A(t)\cup\Omega_M(t))\bigr)=0;$$
\item either
\[
\Omega_A(t_2)\subset\Omega_A(t_1),\qquad\text{ for all $0\leq t_1\leq t_2\leq T$},
\]
or
\[
\Omega_A(t_1)\subset\Omega_A(t_2),\qquad\text{ for all $0\leq t_1\leq t_2\leq T$};
\]
\item for a.e. $\vc x\in\Omega$ there exists $t=t(\vc x)\in[0,T]$ such that $\vc x \in \overline{\Omega}_A(t)\cap\overline{\Omega}_M(t)$;
\item there exists $\mt Q\in  SO(3)$ such that for every $t\in[0,T]$ the map $\vc y_M(\cdot,t)$ satisfying
$$
\nabla \vc y_M(\vc x,t)= 
\begin{cases}
\nabla \vc y (\vc x), \quad &\text{a.e. in }\Omega_M(t)\\
\mt Q, \quad &\text{a.e. in }\Omega_A(t)\\
\end{cases}
$$
is in $W^{1,\infty}(\Omega;\R^3).$
\end{itemize} 
}
\end{definition}
\begin{remark}
\rm
We note that, in the case $\Omega_M(s)\subset\Omega_M(t)$ for each $s,t\in[0,T]$ with $s<t$, we have
$$
\bigcap_{t\in[0,T]}\Omega_A(t) = \varnothing,\qquad \bigcup_{t\in [0,T]}\Omega_M(t) =\Omega.
$$
\end{remark}
\begin{remark}
\rm
If we assume \eqref{separato}, then the formula for differentiation of integrals on time dependent domains implies that
\beq
\label{derivata materiale}
\begin{split}
\Bigl\langle \frac\partial{\partial t}\int_{\R^{3\times3}}\eta_1(\mt A)\,\mathrm{d}\nu_{\vc x,t}(\mt A),\psi\Bigr\rangle =\langle \dot\chi_{\Omega_M}(\nabla \vc y),\psi\rangle = \dert \int_{\Omega_M} \psi \,\mathrm d\vc x\\ = \int_{\Gamma(t)} (\vc v \cdot \vc n) \psi\,\mathrm d\mathscr H^2,\qquad\forall \psi \in C^\infty_0(\Omega),
\end{split}
\eeq
provided $\Omega_A(t),\Omega_M(t)$ 
and $\vc v\cdot \vc n$ are smooth enough (see e.g., \cite{Flanders}). 
Here $\Gamma(t):=\Omega\setminus(\Omega_A\cup\Omega_M)(t)$ is a surface separating $\Omega_A(t)$ from $\Omega_M(t)$, $\vc n$ denotes the outer normal to $\Omega_M$ and $\vc v(\vc s)$ is the velocity of the interface at the point $\vc s\in \Gamma(t)$ at time $t$. By $\langle \cdot,\cdot\rangle$ we denoted the duality pairing between a distribution and a test function. {A version of \eqref{derivata materiale} in the case of some solutions to \eqref{equazPbEv1}--\eqref{equazPbEv2} satisfying the moving mask assumptions is given by Corollary \ref{derivata materiale coroll} below.}
\end{remark}
\section{Generalized Hadamard conditions}
\label{Hadamard generalizations}
In this section we restrict our attention to deformation gradients $\nabla \vc y$ that satisfy the moving mask approximation as stated in Definition \ref{defi mi}.
%
In order to say something more about solutions under these assumptions, we prove below a variant of the Hadamard jump condition reflecting this hypothesis. In what follows, we restrict, without loss of generality, to the case $\Omega_M(s)\subset\Omega_M(t)$ for every $s<t$. As before, below $\Omega\subset\R^3$ is an open bounded connected set with Lipschitz boundary. {For simplicity, rather than working with the deformation map $\vc y$, in this section we mostly work with the displacement map $\vc z := \vc y - \mt Q\vc x$, where $\mt Q$ is as in Definition \ref{defi mi}.\\ 

We start by proving the result when the phase interfaces are planar. This situation describes, for example, the propagation of a simple martensitic laminate in the austenite phase.
}
\begin{proposition}
\label{plane Hadamard}
Let $\Gamma(t)$ be a family of parallel planes perpendicular to $\vc n\in \mathbb{S}^2$, $$\Gamma(t):=\bigl\{\vc x\in\R^3\colon \vc x\cdot  \vc n = h(t)\bigr\},$$
for some non-decreasing function $h\in C([0,T])$ satisfying $$h(0)=\inf_{\vc x\in \Omega} \vc x\cdot\vc n,\qquad h(T)=\sup_{\vc x\in \Omega} \vc x\cdot\vc n.$$ 
For $t \in [0,T]$ define
$$
\Omega_M(t):=\Omega\cap\bigl\{\vc x\cdot\vc  n < h(t)\bigr\},\qquad\Omega_A(t):=\Omega\cap\bigl\{\vc x\cdot \vc n > h(t)\bigr\}.
$$
Let $\vc z\in W^{1,\infty}(\Omega;\mathbb{R}^3)$ be such that $\vc Z=\vc Z(\vc x,t)$ satisfying
\begin{equation}
\label{Z cond}
\nabla \vc Z(\vc x,t)= 
\begin{cases}
\nabla \vc z(\vc x), \quad &\text{a.e. in }\Omega_M(t)\\
\mt 0, \quad &\text{a.e. in }\Omega_A(t)\\
\end{cases}
\end{equation}
is in $W^{1,\infty}(\Omega;\mathbb{R}^3)$ for a.e. $t\in (0,T)$. Then, 
\begin{enumerate}
\item\label{numero 1 lista} there exists $ \vc a\in L^{\infty}(\Omega;\mathbb{R}^3)$ such that 
\[
\nabla  \vc z(\vc x)= \vc a(\vc x)\otimes \vc n,\qquad \text{a.e. }\vc x \in \Omega.
\]
\item if $\Omega\cap\Gamma(t)$ is connected for every $t\in(0,T)$ then $\vc z=\vc f(\vc x\cdot \vc n)$ for some $\vc f\in W^{1,\infty}((0,T);\R^3)$.
%
\end{enumerate}
\end{proposition}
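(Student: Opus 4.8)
The plan is to exploit the time-dependent family of extensions $\vc Z(\cdot,t)$ to deduce that $\nabla \vc z$ has rank one with fixed normal $\vc n$ at a.e. point. First I would fix a value $t\in(0,T)$ for which $\nabla\vc Z(\cdot,t)\in W^{1,\infty}(\Omega;\R^3)$, i.e. $\vc Z(\cdot,t)$ has a gradient which agrees with $\nabla\vc z$ on $\Omega_M(t)=\Omega\cap\{\vc x\cdot\vc n<h(t)\}$ and with $\mt 0$ on $\Omega_A(t)$. Since $\vc Z(\cdot,t)$ is Lipschitz and its gradient vanishes on the open set $\Omega_A(t)$ (which, $\Omega$ being connected and $\Gamma(t)$ a hyperplane, is a nonempty open set whenever $h(t)<\sup_\Omega \vc x\cdot\vc n$), the function $\vc Z(\cdot,t)$ is locally constant on $\Omega_A(t)$. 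Now I would slide $t$: for $s<t$ both $\vc Z(\cdot,s)$ and $\vc Z(\cdot,t)$ are Lipschitz extensions, and on the slab $\Omega\cap\{h(s)<\vc x\cdot\vc n<h(t)\}$ the first has gradient $\mt 0$ while the second has gradient $\nabla\vc z$. Comparing the two, and using that their difference is Lipschitz with gradient supported appropriately, I obtain that $\vc z$ restricted to each slab between two level sets of $\vc x\cdot\vc n$ is rigidly tied to the hyperplane structure; more precisely, applying Proposition \ref{rank one connections} on suitable subdomains (or rather its idea: a Lipschitz map that is constant on one side of a hyperplane has a gradient which, where it is defined, must be of the form $\vc a\otimes\vc n$) gives $\nabla\vc z(\vc x)=\vc a(\vc x)\otimes\vc n$ for a.e. $\vc x$ in each slab, hence a.e. in $\Omega$, with $\vc a\in L^\infty(\Omega;\R^3)$ since $\vc z$ is Lipschitz.

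More concretely, for part (1) the cleanest route is: fix $t$ with $\nabla \vc Z(\cdot,t)$ Lipschitz, and note $\nabla\vc Z(\cdot,t)=\mt 0$ a.e. on the open set $\Omega_A(t)$, so $\vc Z(\cdot,t)$ is constant on each connected component of $\Omega_A(t)$. Consider the Lipschitz map $\vc w := \vc z - \vc Z(\cdot,t)$ on $\Omega$; its gradient is $\mt 0$ a.e. on $\Omega_M(t)$ and equals $\nabla\vc z$ a.e. on $\Omega_A(t)$ — wait, that is the wrong sign of information, so instead I directly work with $\vc Z(\cdot,t)$ itself. Since $\vc Z(\cdot,t)$ is Lipschitz on $\Omega$ and $\nabla\vc Z(\cdot,t)$ vanishes a.e. on the half-space part $\Omega_A(t)$, for two times $t_1<t_2$ the map $\vc Z(\cdot,t_1)-\vc Z(\cdot,t_2)$ is Lipschitz on $\Omega$ with gradient vanishing a.e. outside the slab $S:=\Omega\cap\{h(t_1)<\vc x\cdot\vc n<h(t_2)\}$ and equal to $-\nabla\vc z$ a.e. on $S$; in particular on $\Omega\cap\{\vc x\cdot\vc n>h(t_2)\}$ it is constant, and on $\Omega\cap\{\vc x\cdot\vc n<h(t_1)\}$ it is also constant. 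Mollifying in the $\vc n$-direction, or directly invoking that a Lipschitz function constant outside a slab bounded by hyperplanes $\perp\vc n$ has gradient of the form $\vc a\otimes\vc n$ a.e. inside the slab (this is essentially the one-dimensional structure of such functions: write $\vc x=(\vc x'\cdot,s\vc n)$ and note that constancy on both sides forces the gradient to have no $\vc x'$-component, by a difference-quotient argument in directions $\perp\vc n$), gives $\nabla\vc z(\vc x)=\vc a(\vc x)\otimes\vc n$ a.e. on $S$. Letting $t_1\downarrow 0$, $t_2\uparrow T$ and using $h(0)=\inf\vc x\cdot\vc n$, $h(T)=\sup\vc x\cdot\vc n$, the slabs exhaust $\Omega$ and the claim follows, with $\|\vc a\|_{L^\infty}\le\mathrm{Lip}(\vc z)$.

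For part (2), assume $\Omega\cap\Gamma(t)$ is connected for every $t$. From part (1), $\partial_{\vc e}\vc z=\vc 0$ in the distributional sense for every $\vc e\perp\vc n$, so $\vc z$ is, on each hyperplane slice $\Omega\cap\{\vc x\cdot\vc n=\sigma\}$, constant along all directions tangent to that hyperplane; connectedness of the slice then upgrades this to: $\vc z$ is constant on $\Omega\cap\{\vc x\cdot\vc n=\sigma\}$ for a.e. $\sigma$. Define $\vc f(\sigma)$ to be that common value. That $\vc f$ is Lipschitz in $\sigma$ follows because $\vc z$ is Lipschitz: $|\vc f(\sigma_1)-\vc f(\sigma_2)|=|\vc z(\vc x_1)-\vc z(\vc x_2)|$ for points $\vc x_i$ in the respective slices chosen with $|\vc x_1-\vc x_2|=|\sigma_1-\sigma_2|$ (segment normal to the hyperplanes, staying in $\Omega$ by sliding along the level sets and using connectedness/openness), giving $\mathrm{Lip}(\vc f)\le\mathrm{Lip}(\vc z)$, and of course $\sigma$ ranges over an interval of length $\le T$ so a reparametrisation puts $\vc f\in W^{1,\infty}((0,T);\R^3)$. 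Hence $\vc z(\vc x)=\vc f(\vc x\cdot\vc n)$ a.e., as claimed.

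The main obstacle I expect is the rigorous passage from "gradient vanishes outside a slab and is defined only a.e. inside" to "gradient is $\vc a\otimes\vc n$ a.e. inside the slab" without assuming more regularity than Lipschitz: one must be careful that $\vc Z(\cdot,t)\in W^{1,\infty}$ only for \emph{a.e.} $t$, so the slab endpoints $h(t_1),h(t_2)$ must be chosen in a full-measure set, and $h$ being merely continuous and non-decreasing, one needs that $\{h(t):t\in \text{good set}\}$ is dense in $[\inf\vc x\cdot\vc n,\sup\vc x\cdot\vc n]$ — which it is, since $h$ is continuous, surjective onto that interval, and the good set of $t$ has full measure. The second delicate point is justifying the difference-quotient argument in tangential directions purely from Lipschitz regularity plus the a.e. structure of the gradient; this is standard (a Lipschitz function whose distributional gradient, a priori an $L^\infty$ field, has no component along directions spanning a hyperplane is independent of those variables), but it should be stated as the key technical lemma underpinning the whole proof.
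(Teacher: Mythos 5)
Your proof has a genuine gap in part (1), and it is not merely a technical one. The claim you invoke---that a Lipschitz map which is constant on one side of a hyperplane (or outside a slab bounded by hyperplanes perpendicular to $\vc n$) must have gradient of the form $\vc a\otimes\vc n$ a.e.\ on the other side---is \emph{false}, and recognising that it is false is precisely the motivation for the moving mask construction. The paper discusses this explicitly in Section~\ref{nonlonear elastic}: the Ball--James example and the polyconvexity result of \cite{Iwaniec} produce Lipschitz maps constant on a half-space whose gradients on the other side are not rank-one a.e.\ (indeed, whose polyconvex hull contains no rank-one connection to $\mt 0$). Your ``difference-quotient argument in directions $\perp\vc n$'' does not work because for $\vc e\perp\vc n$ the points $\vc x$ and $\vc x+h\vc e$ both lie inside the slab, so constancy outside the slab provides no constraint; a simple example is $\vc w(\vc x)=(c-\vc x\cdot\vc n)_+\,g(\vc x')$ with $g$ nonconstant. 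Using only two times $t_1<t_2$ and the constancy of $\vc Z(\cdot,t_1)-\vc Z(\cdot,t_2)$ outside the slab can therefore never deliver the rank-one conclusion inside the slab.

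What you are missing is the way the full one-parameter family of interfaces is used. The correct mechanism is pointwise: for $\mathscr L^3$-a.e.\ $\hat{\vc x}\in\Omega$ one can find $t^*$ with $\hat{\vc x}\in\Gamma(t^*)$ and $\vc Z(\cdot,t^*)\in W^{1,\infty}$ (continuity and surjectivity of $h$, together with the full-measure good set of times, guarantee this). Since $\nabla\vc Z(\cdot,t^*)=\mt 0$ a.e.\ in the open set $\Omega_A(t^*)$ and $\vc Z(\cdot,t^*)$ is continuous, $\vc Z(\cdot,t^*)$ is constant on $\Gamma(t^*)\cap B(\hat{\vc x},r)$ for small $r$; and by continuity $\vc z=\vc Z(\cdot,t^*)$ on that hyperplane patch, so $\vc z$ itself is constant there. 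Combined with differentiability of $\vc z$ at $\hat{\vc x}$ (which holds a.e.\ by Rademacher), this forces $\partial\vc z/\partial x_i(\hat{\vc x})=0$ for the two tangential directions, i.e.\ $\nabla\vc z(\hat{\vc x})=\vc a(\hat{\vc x})\otimes\vc n$. This argument uses the hyperplane through each point, not a pair of bracketing hyperplanes. Your treatment of part (2) is essentially correct once (1) is in hand, and your observation that good times are dense and that Lipschitz plus vanishing tangential distributional derivatives gives one-dimensional structure is fine; but the heart of part (1) needs the pointwise interface-through-the-point argument rather than the slab argument.
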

\begin{proof}
By rotating the system of coordinates we can assume without loss of generality that $\vc n=\vc e_3$. 
Let us consider the set ${\mathcal B}_1\subset\Omega$ of points where $\vc z$ is differentiable, and the set ${\mathcal B}_2$ of points $\vc x\in\Omega$ such that there exists $t^*\in(0,T)$ for which $\vc x\in \Gamma(t^*)$ and $\vc Z(\cdot,t^*) \in W^{1,\infty}(\Omega;\R^3)$. By continuity of $h$ we have that $\meas\bigl(\Omega\setminus({\mathcal B}_1\cap{\mathcal B}_2)\bigr)=0$. Let us thus consider a generic point $\hat{\vc x} \in {\mathcal B}_1\cap {\mathcal B}_2$, and notice that, since $\Omega$ is open, there exists $r>0$ such that the ball $B(\hat{\vc x},r)\subset\Omega$. By \eqref{Z cond}, $\vc Z(\cdot,t^*)$ must be constant in each connected component of $\Omega_A(t)$. In particular, as $\vc Z(\cdot,t^*)\in W^{1,\infty}(\Omega,\mathbb{R}^3)$ is  continuous, it must be a constant on $\Gamma(t^*)\cap B(\hat{\vc x},r)$. At the same time, continuity of $\vc z$ and $\vc Z(\cdot,t^*)$ implies also $\vc z(\vc x) = \vc Z(\vc x,t^*)$ for every $\vc x\in \Gamma(t^*)\cap B(\hat{\vc x},r)$. Therefore, 
the function $\vc z(\vc x)$ must be constant on $\Gamma(t^*)\cap B(\hat{\vc x},r)$. 
%
This implies, 
$$
\frac{\partial \vc z}{\partial x_i}(\hat{\vc x})=0,\qquad i=1,2.
$$ 
The arbitrariness of $\hat{\vc x}\in\mathcal{B}_1\cap\mathcal{B}_2$ yields the first statement. 
On the other hand, if $\Omega\cap\Gamma(t)$ is connected for a.e. $t\in(0,T)$, then $\vc z(\vc x)$ is constant on $\Gamma(t^*)\cap\Omega$ for a.e. $t\in(0,T)$ and hence $\vc z=\vc z(x_3)$. This concludes the proof.
\end{proof}

\begin{remark}
\label{sono levelset i piani}
\normalfont
We could replace the hypothesis concerning the connectedness of $\Omega_A(t)$ by assuming that $\vc Z(\vc x,t)$ is equal to a constant $\vc c(t)\in\R^3$ in $\Omega_A(t)$ for a.e. $t\in [0,T]$. Both these assumptions are automatically satisfied if $\Omega$ is convex.
\end{remark}
\begin{remark}
\normalfont
\label{level set rk}
{In the case $\vc z=\vc f(\vc x\cdot \vc n)$, and $\Gamma(t)$ is a single plane, the phase interfaces must coincide with the level sets of $\vc f$. Therefore, given an experimentally measured martensitic macroscopic deformation gradient, under the assumption that it satisfies the moving mask approximation, and is of the form $\mt 1+\vc a(\vc x\cdot\vc n)\otimes \vc n$, for some $\vc a\in\R^3,\vc n\in \mathbb{S}^2$, we can reconstruct the position of  austenite-martensite phase interfaces, by taking the level sets of $\vc f (\vc x\cdot\vc n)= \int_0^{\vc x\cdot n}\vc a(s)\,\mathrm{d}s.$
%
Furthermore, in the case $\vc z=\vc f(\vc x\cdot \vc n)$, and $\Gamma(t)$ is a single plane, the discontinuities in the macroscopic deformation gradient can occur only across the planes $\vc x\cdot\vc n = constant$. This is, for example, the case for type II twins satisfying the cofactor conditions, for which we refer the reader to Proposition \ref{type ii inter}.}
\end{remark}

Proposition \ref{plane Hadamard} can be partially generalized to the case where $\Gamma(t)$ is a family of curved interfaces. As a first step, we need to introduce the concept of moving interfaces for our problem, generalizing the previous requirements on planar such interfaces.
\begin{definition}
\label{DEf inter}
We say that $\Gamma(t)\subset\Omega$ is a \textit{family of moving interfaces in $\Omega$} if:
\begin{enumerate}[(i)]
\item \label{def i}there exist two families of open disjoint sets $\Omega_M(t),\Omega_A(t)\subset\Omega$ and a bounded open interval $I_T:=[0,T]$ such that for every $t$ in $I_T$, 
$$\Omega = \Omega_M(t)\cup\Omega_A(t)\cup \Gamma(t)
\qquad \text{and} \qquad \Gamma(t)\cap\Omega_M(t)=\Gamma(t)\cap\Omega_A(t)=\varnothing.
%
$$
Furthermore, $\Omega_M(t)$ is non-decreasing in t, i.e., 
$$\Omega_M(t)\subset\Omega_M(s),\qquad \Omega_A(s)\subset\Omega_A(t),\qquad \forall t<s\in I_T;$$
\item
\label{def mi 2} 
the set 
\[
\mathcal B:=
\Set{\vc x\in
\Omega\,\Bigg|\; \text{\parbox{3.2in}{\centering $\vc x\in\Gamma(t^*)$, $t^*\in I_T$ and there exist $\mathcal U_\vc x\subset\Omega$ open and connected, $\vc x\in\mathcal{U}_\vc x$, and a Lipschitz $2$-graph $\mathcal{G}_\vc x$ differentiable at $\vc x$ such that $\mathcal{G}_\vc x\cap \mathcal U_\vc x \subset \Gamma(t^*)\cap\mathcal U_\vc x\subset\overline\Omega_{M}(t^*)\cap
\overline\Omega_{A}(t^*)\cap\mathcal{U}_\vc x$}}}
\]
\noindent 
is measurable and $\mathscr L^3\bigl(\Omega\setminus\mathcal B\bigr)=0$.
\end{enumerate}
Points in $\mathcal B$ are called regular points for $\Gamma(t)$. 
\end{definition}
At this point we can also introduce the concept of a regular moving mask approximation:
{
\begin{definition}
We say that $\vc y\in W^{1,\infty}(\Omega;\R^3)$ satisfies a regular moving mask approximation if it satisfies the moving mask approximation and $$\Gamma(t)=\Omega\setminus(\Omega_A(t)\cup\Omega_M(t))$$ is a family of moving interfaces in $\Omega$, where $\Omega_A,\Omega_M$ are as in Definition \rm\ref{defi mi}.
\end{definition}
}
\begin{figure}
\centering
\begin{tikzpicture}
\draw (-5,1.5) -- (5,1.5);
\draw (-5,-1) -- (5,-1);
\draw (5,-1) -- (5,1.5);
\draw (-5,-1) -- (-5,1.5);

\draw (-1,1) -- (1,1);
\filldraw [red] (0,1) circle (2pt) node[anchor=south,black] {$\vc x_a$};
\draw (-5,0) .. controls (-2,1) and (-2,-1) .. (0,0.5);
\draw (0,0.5) .. controls (1,-1) and (2,0) .. (5,0);
\filldraw [red] (0,0.5) circle (2pt) node[anchor=west,black] {$\vc x_b$};
\filldraw [red] (4.65,0) circle (0pt) node[anchor=north,black] {$\Gamma(t)$};
\filldraw [red] (1,1) circle (0pt) node[anchor=west,black] {$\Gamma(t)$};
\filldraw [red] (-3.5,-0.5) circle (0pt) node[anchor=west,black] {$\Omega_M(t)$};
\filldraw [red] (-3.5,0.7) circle (0pt) node[anchor=west,black] {$\Omega_A(t)$};
 
\end{tikzpicture}
\caption{
\label{punti irr}
Points which are not regular: $\vc x_a$ is in a smooth $k$-graph contained in $\Gamma(t)$, but is not separating $\Omega_A$ from $\Omega_M$. $\Gamma(t)$ does not coincide with a Lipschitz function differentiable at $\vc x_b$. 
}
\end{figure}
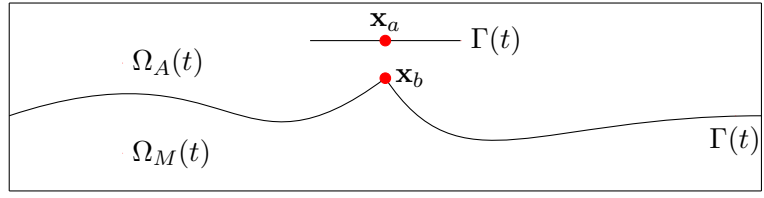

\begin{remark}
\normalfont
The requirement $\Gamma(t^*)\cap\mathcal U\subset\overline\Omega_{A}(t^*)\cap\overline\Omega_{M}(t^*)\cap\mathcal{U}$ in Definition \ref{DEf inter}(\ref{def mi 2}) is mainly to guarantee that the set where an interface is cutting either $\Omega_A$ or $\Omega_M$ and not separating one from the other is small (see e.g., the point $\vc x_a$ in Figure \ref{punti irr}). In this way, families of moving interfaces satisfying the separation condition may also describe further nucleations in the interior of $\Omega_A$ during the phase transition. 
\end{remark}

%
{Below, we say that a curve $\vc c\colon [t_0,t_1]\to\R^3$, for some $t_0,t_1\in\R$, is simple if $\vc c(s)\neq \vc c(t)$ for each $s,t\in[t_0,t_1]$.} The following theorem generalizes Proposition \ref{plane Hadamard} to curved interfaces:
\begin{theorem}
\label{curved Hadamard}
Let $\Gamma(t)$ be a family of moving interfaces in $\Omega$. Assume $\vc z\in W^{1,\infty}(\Omega;\mathbb{R}^3)$ is such that the function $\vc Z=\vc Z(\vc x,t)$ satisfying 
\beq
\label{Z cond 2}
\begin{cases}
\nabla \vc z(\vc x), \quad &\text{a.e. in }\Omega_M(t)\\
\mt 0, \quad &\text{a.e. in }\Omega_A(t)\\
\end{cases}
\end{equation}
with $\Omega_A(t),\Omega_M(t)$ as in Definition {\rm\ref{DEf inter}}, is in $W^{1,\infty}(\Omega;\mathbb{R}^3)$ for every $t\in I_T$. 
Then, there exist $\vc a\in L^{\infty}(\Omega;\mathbb{R}^3)$, $\vc n \in L^{\infty}(\Omega;\mathbb{S}^2)$ such that
\begin{equation}
\label{r1 curve}
\nabla \vc z(\vc x)=\vc a(\vc x)\otimes \vc n(\vc x),\qquad\text{a.e. }\vc x\in \Omega.
\end{equation}
Conversely, let $\vc z\in W^{1,\infty}(\Omega;\mathbb{R}^3)$ be such that \eqref{r1 curve} is satisfied and $\vc z(\Omega)$ is contained in the image of an 
absolutely continuous simple curve $ \vc c\colon I_T\to\R^3$ of finite length. Then, if $|\vc a|>0$ a.e. in $\Omega$, there exists a family of moving interfaces in $\Omega$, and a $\vc Z=\vc Z(\vc x,t)$ in $W^{1,\infty}(\Omega;\mathbb{R}^3)$ satisfying \eqref{Z cond 2} for every $t\in I_T$. Furthermore, $\mathscr{L}^3 \bigl( \Omega\setminus(\Omega_A(t)\cup\Omega_M(t))\bigr)=0$ for every $t\in I_T$.
\end{theorem}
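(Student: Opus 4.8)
The forward implication will follow the structure of Proposition \ref{plane Hadamard}, but localised at regular points of $\Gamma(t)$. The plan is: fix a regular point $\vc x\in\mathcal B$ at which $\vc z$ is additionally differentiable — by Rademacher the set of such points has full measure in $\mathcal B$, hence in $\Omega$. At such an $\vc x$ there is a time $t^*$, a connected neighbourhood $\mathcal U_\vc x$, and a Lipschitz $2$-graph $\mathcal G_\vc x$, differentiable at $\vc x$, with $\mathcal G_\vc x\cap\mathcal U_\vc x\subset\Gamma(t^*)\cap\mathcal U_\vc x$ separating $\overline\Omega_M(t^*)$ from $\overline\Omega_A(t^*)$ locally. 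Since $\nabla\vc Z(\cdot,t^*)=\mt 0$ a.e.\ on the (open) austenite side inside $\mathcal U_\vc x$ and $\vc Z(\cdot,t^*)\in W^{1,\infty}$ is continuous, $\vc Z(\cdot,t^*)$ is locally constant on $\Omega_A(t^*)\cap\mathcal U_\vc x$; by continuity it takes that same constant value on $\Gamma(t^*)\cap\mathcal U_\vc x$, and by continuity of $\vc z$ and the matching $\vc z=\vc Z(\cdot,t^*)$ on the closure of the martensite side we conclude $\vc z$ is constant on $\mathcal G_\vc x\cap\mathcal U_\vc x$. Differentiating this constancy along the tangent plane of $\mathcal G_\vc x$ at $\vc x$ (which exists since $\mathcal G_\vc x$ is differentiable there) forces $\nabla\vc z(\vc x)$ to annihilate a $2$-dimensional subspace, i.e.\ $\nabla\vc z(\vc x)=\vc a(\vc x)\otimes\vc n(\vc x)$ with $\vc n(\vc x)$ the graph normal. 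Measurability of $\vc a,\vc n$ then comes from measurability of $\nabla\vc z$; boundedness is immediate from $\vc z\in W^{1,\infty}$. This gives \eqref{r1 curve}.

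For the converse, assume \eqref{r1 curve} with $|\vc a|>0$ a.e., and that $\vc z(\Omega)$ lies in the image of an absolutely continuous simple curve $\vc c\colon I_T\to\R^3$ of finite length. The idea is to use the curve to manufacture the mask. Reparametrise $\vc c$ by a strictly increasing, absolutely continuous function so that $\vc c$ remains simple; simplicity lets us define a well-defined "time" $\tau(\vc x)\in I_T$ by $\vc c(\tau(\vc x))=\vc z(\vc x)$, i.e.\ $\tau=\vc c^{-1}\circ\vc z$. Then set $\Gamma(t):=\{\vc x\in\Omega\colon\tau(\vc x)=t\}$, $\Omega_M(t):=\{\tau<t\}$, $\Omega_A(t):=\{\tau>t\}$; the monotonicity requirement of Definition \ref{DEf inter}(\ref{def i}) is automatic, and $\Gamma(t)$ is a level set of the Lipschitz (near-)scalar $\tau$. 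One defines $\vc Z(\vc x,t)$ by integrating: since $\nabla\vc z=\vc a\otimes\vc n$ is rank-one, $\vc z$ factors (locally) through a scalar, and the candidate is $\vc Z(\cdot,t)=\vc z$ on $\Omega_M(t)$, extended by the constant $\vc c(t)$ on $\Omega_A(t)$; continuity across $\Gamma(t)$ holds because on $\Gamma(t)$ we have $\vc z=\vc c(t)$ by construction, and one checks $\vc Z(\cdot,t)\in W^{1,\infty}$ with $\nabla\vc Z=\nabla\vc z\,\chi_{\Omega_M(t)}$. The hypothesis $|\vc a|>0$ a.e.\ is what prevents $\vc z$ from being locally constant on a fat set, so that the level sets $\Gamma(t)$ are genuinely $\mathscr L^3$-null and $\mathscr L^3(\Omega\setminus(\Omega_A(t)\cup\Omega_M(t)))=0$; this last point will likely invoke Theorem \ref{Fed thm} (coarea) applied to $\vc z$ viewed as a Lipschitz map into the $1$-rectifiable set $\vc c(I_T)$, whose $1$-Jacobian is positive exactly where $|\vc a|>0$, so almost every level set has $\mathscr L^3$-measure zero after re-examination — more precisely the coarea formula \eqref{coarea} forces $\mathscr L^3(\{J_1\vc z=0\})$ to carry all the mass of level sets, but $J_1\vc z=|\nabla\vc z|=|\vc a|>0$ a.e.

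The main obstacle is the converse direction, and specifically verifying that the constructed $\Gamma(t)$ is a genuine \emph{family of moving interfaces} in the precise sense of Definition \ref{DEf inter}(\ref{def mi 2}): one must show that $\mathscr L^3$-a.e.\ point of $\Omega$ is a regular point, i.e.\ locally the level set $\{\tau=t^*\}$ coincides with a Lipschitz $2$-graph differentiable at that point and actually separates the two phases. This is a regularity-of-level-sets statement for the Lipschitz function $\tau=\vc c^{-1}\circ\vc z$ — delicate because $\vc c^{-1}$ need only be continuous, not Lipschitz, so $\tau$ itself may fail to be Lipschitz. The resolution I anticipate is to work directly with $\vc z$ rather than $\tau$: since $\nabla\vc z(\vc x)=\vc a(\vc x)\otimes\vc n(\vc x)$ with $|\vc a|>0$, at a.e.\ $\vc x$ the implicit function theorem (in its Lipschitz/a.e.-differentiable form, cf.\ the results cited in \cite{ABC} on level sets of Lipschitz functions, together with Theorem \ref{Fed thm}) yields that the level set of $\vc z$ through $\vc x$ is, near $\vc x$, a Lipschitz $2$-graph with normal $\vc n(\vc x)$; combining with the monotone ordering of the $\Omega_M(t)$ gives the separation property. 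Handling the pathological points — where $\vc c$ has zero derivative, or $\vc z$ is not differentiable, or the level set pinches — and showing their union is $\mathscr L^3$-null is where the real work lies, and the finite-length plus absolute-continuity hypotheses on $\vc c$ are exactly what make that error set controllable.
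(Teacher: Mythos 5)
Your forward direction matches the paper's proof almost step for step: localise at a regular, differentiability point of $\vc z$, use the separating Lipschitz $2$-graph $\mathcal G_{\vc x}$ from Definition \ref{DEf inter}(\ref{def mi 2}) to show $\vc z$ is locally constant on $\mathcal G_{\vc x}$, then flatten the graph (paper) or differentiate tangentially (you) to kill two directional derivatives. The only thing you wave at is measurability of $\vc a,\vc n$, for which the paper has a dedicated lemma (Lemma \ref{measurable}); ``measurability of $\nabla\vc z$'' alone does not hand you a measurable unit-normal selection, so that step needs the explicit construction. Still, this half is essentially the paper's argument.

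In the converse, however, there is a genuine gap that the paper spends most of the proof addressing, and your sketch conflates two distinct objects. You claim that, at a.e.\ $\vc x$, the Lipschitz implicit function theorem ``yields that the level set of $\vc z$ through $\vc x$ is, near $\vc x$, a Lipschitz $2$-graph with normal $\vc n(\vc x)$''. What the implicit function theorem (in Halkin's a.e.-differentiable form, as the paper uses) actually produces is a local Lipschitz $2$-graph that is a level set for a single scalar component $z_i$ with $a_i(\vc x)\neq 0$. There is no a priori reason this coincides with a level set for the full vector map $\vc z$; for $C^1$ maps the constant rank theorem would close that gap, but for a merely Lipschitz map with rank-one a.e.\ gradient such a constant rank theorem is not available (the paper explicitly says it could not be located). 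The paper bridges this by a bespoke argument: it uses differentiability of $\vc z$ at $\hat{\vc x}$ to show $z_1$ is strictly increasing/decreasing along the normal direction; combined with the fact that $\vc z(\Omega)$ lies in the image of a \emph{simple} curve $\vc c$ and that $\vc c\circ\tau$ is continuous, it deduces (by contradiction with simplicity) that constancy of the one component $c_1$ forces constancy of all components of $\vc c$. This step is the crux, and your proposal does not contain it. Relatedly, your alternative route via $\tau=\vc c^{-1}\circ\vc z$ is problematic for exactly the reason you flag (loss of Lipschitz regularity), but you do not then actually supply the replacement argument.

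Two further steps required by Definition \ref{DEf inter} are absent from your sketch. First, $\Omega_M(t)$ and $\Omega_A(t)$ must be \emph{open}; the paper proves this via a quantitative Lipschitz estimate (choosing $R:=\tfrac12\|\nabla\vc z\|_{L^\infty}^{-1}|\vc c(t^*)-\vc z(\hat{\vc x})|$) together with simplicity of $\vc c$ to rule out a nearby point mapping past $\vc c(t^*)$. Second, the separation requirement $\Gamma(t^*)\cap\mathcal U\subset\overline\Omega_A(t^*)\cap\overline\Omega_M(t^*)$ needs its own argument: the paper shows that if a neighbourhood of a point in $\Gamma(t^*)$ meets only one phase, then differentiability of $\vc z$ there forces $\nabla\vc z=\mt 0$, and then the coarea formula \eqref{coarea} with $|\vc a|>0$ a.e.\ bounds the bad set. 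You gesture at these (``the real work lies''), but leave them unaddressed; as things stand the converse direction is not proved.
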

{
\begin{remark}
\rm{The assumptions on the image of $\vc z$ in Theorem \ref{curved Hadamard} are motivated by the following observation: if $\vc z \in C^1(\Omega;\R^3)$, and $\nabla\vc z$ is rank-one everywhere in $\Omega$, then the constant rank theorem implies that, around every $\vc x\in\Omega$, the image of $\vc z$ is a simple absolutely continuous curve. However, the set $\vc z(\Omega)$ can a priori show branching and other complex structures even in the regular case (e.g., if $\Omega$ is non-convex). For the sake of clarity of the proof, in this paper we restrict ourselves to the easier case where $\vc z(\Omega)$ is a simple absolutely continuous curve.
{Nonetheless, a statement similar to the second implication in Theorem \ref{curved Hadamard} can be proved for maps $\vc z \colon \Omega\to\R^3$ whose image satisfies}
\begin{itemize}
\item {$\vc z (\Omega)$ is $1-$rectifiable};
\item for $\mathscr H^1-$a.e. $\boldsymbol\xi\in\vc z(\Omega)$ there exist an open ball $\mathcal D_{\boldsymbol\xi}\subset\R^3$ such that $\mathcal D_{\boldsymbol\xi}\cap \vc z(\Omega)$ is a simple curve of finite length which is absolutely continuous. 
\end{itemize}
}
\end{remark}

\begin{remark}
\rm{
Assume that the moving mask assumption holds, and that we can reconstruct $\vc z$ from experimental observations. Then, provided the image of $\vc z$ satisfies the stated assumptions, the second part of Theorem \ref{curved Hadamard} gives a useful tool to reconstruct phase interfaces during the phase transformation.
}
\end{remark}
}
For the proof of Theorem \ref{curved Hadamard} we need the following Lemma:
\begin{lemma}
\label{measurable}
Let $\mt f\in L^\infty(\Omega;\R^{3\times3})$ be such that $\mt f(\vc x)=(\vc b\otimes\vc  m)(\vc x)$ for a.e. $\vc x\in\Omega$. Then, there exist $\vc a,\vc n\in L^\infty(\Omega;\R^3)$ such that $\mt f(\vc x) = \vc a(\vc x)\otimes \vc n(\vc x)$ and $|\vc n(\vc x)|=1$ for a.e. $\vc x\in\Omega$.
\end{lemma}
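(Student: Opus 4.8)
The plan is to construct $\vc a$ and $\vc n$ explicitly and pointwise from $\mt f$, making sure the selection can be done measurably. For a.e. $\vc x\in\Omega$, $\mt f(\vc x)$ is a rank-one matrix, so its range is a one-dimensional subspace of $\R^3$ (or $\{\mt 0\}$ on the null set $N:=\{\vc x:\mt f(\vc x)=\mt 0\}$, which I handle separately). The first step is to extract a canonical nonzero column: let $j(\vc x)\in\{1,2,3\}$ be the smallest index such that the $j$-th column $\mt f(\vc x)\vc e_j$ is nonzero, and set
\[
\vc a(\vc x):=\mt f(\vc x)\vc e_{j(\vc x)}.
\]
Since $\{\vc x: j(\vc x)=k\}=\{\vc x:\mt f(\vc x)\vc e_1=\cdots=\mt f(\vc x)\vc e_{k-1}=\mt 0,\ \mt f(\vc x)\vc e_k\neq\mt 0\}$ is measurable for each $k$ (as $\mt f$ is measurable), $j$ is a measurable function, and hence $\vc a$ is measurable; it is bounded because $|\vc a(\vc x)|\leq\|\mt f(\vc x)\|\leq\|\mt f\|_{L^\infty}$. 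On $N$ I simply set $\vc a(\vc x)=\mt 0$.

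Next I recover $\vc n$. Writing $\mt f(\vc x)=\vc b(\vc x)\otimes\vc m(\vc x)$ a.e., the $k$-th column of $\mt f(\vc x)$ is $m_k(\vc x)\vc b(\vc x)$, so every nonzero column is a scalar multiple of $\vc b(\vc x)$, and in particular $\vc a(\vc x)=m_{j(\vc x)}(\vc x)\vc b(\vc x)$ with $m_{j(\vc x)}(\vc x)\neq 0$ off $N$. Consequently $\mt f(\vc x)=\vc a(\vc x)\otimes\tilde{\vc n}(\vc x)$ where $\tilde{\vc n}(\vc x):=\vc m(\vc x)/m_{j(\vc x)}(\vc x)$; but I do not want to rely on $\vc b,\vc m$ themselves being measurable. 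Instead I note that off $N$ the vector $\tilde{\vc n}(\vc x)$ is uniquely determined by $\mt f(\vc x)$ and $\vc a(\vc x)$ through $\mt f(\vc x)^T\vc a(\vc x)=|\vc a(\vc x)|^2\,\tilde{\vc n}(\vc x)$, so I may simply \emph{define}
\[
\tilde{\vc n}(\vc x):=\frac{\mt f(\vc x)^T\vc a(\vc x)}{|\vc a(\vc x)|^2},\qquad \vc n(\vc x):=\frac{\tilde{\vc n}(\vc x)}{|\tilde{\vc n}(\vc x)|}\quad\text{on }\Omega\setminus N,
\]
and $\vc n(\vc x):=\vc e_1$ on $N$. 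This is a composition of measurable maps with continuous maps on the open set where $\vc a\neq\mt 0$ (respectively $\tilde{\vc n}\neq\mt 0$), hence measurable; and $|\vc n|=1$ everywhere by construction. Then $\vc a(\vc x)\otimes\vc n(\vc x)=|\tilde{\vc n}(\vc x)|^{-1}\,\vc a(\vc x)\otimes\tilde{\vc n}(\vc x)$, and after absorbing the scalar $|\tilde{\vc n}(\vc x)|^{-1}$ into $\vc a$ — i.e. replacing $\vc a$ by $\vc a/|\tilde{\vc n}|$, still measurable and bounded since $|\tilde{\vc n}(\vc x)|\geq |\mt f(\vc x)^T\vc a(\vc x)\cdot\vc e_{j(\vc x)}|/|\vc a(\vc x)|^2 = 1$ wait, more simply $\tilde{\vc n}(\vc x)\cdot\vc e_{j(\vc x)}=1$ so $|\tilde{\vc n}(\vc x)|\geq 1$ and the rescaled $\vc a$ is bounded by $\|\mt f\|_{L^\infty}$ — we obtain $\mt f(\vc x)=\vc a(\vc x)\otimes\vc n(\vc x)$ a.e. with $|\vc n|=1$ and $\vc a,\vc n\in L^\infty(\Omega;\R^3)$.

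The only genuinely delicate point is the measurable selection, i.e. ensuring that the pointwise choices of $j(\vc x)$ and the normalizations patch together into measurable functions; the computation above resolves this by choosing everything as an explicit Borel function of $\mt f(\vc x)$ rather than of the non-unique factors $\vc b,\vc m$. I expect no other obstacle: everything else is the elementary linear algebra of rank-one matrices together with the bound $|\vc a(\vc x)|\leq\|\mt f(\vc x)\|$ and $|\tilde{\vc n}(\vc x)|\geq 1$, which give the $L^\infty$ estimates for free.
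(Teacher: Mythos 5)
Your approach is sound and takes a genuinely different measurable-selection route from the paper, but it contains one small computational slip that flips the sense of the final rescaling. The paper works with the symmetric rank-one matrix $\mt f^T\mt f=|\vc b|^2\,\vc m\otimes\vc m$: after dividing by $\tr(\mt f^T\mt f)$ it obtains the normalized projector $\mt g=\hat{\vc m}\otimes\hat{\vc m}$ and reads off a unit $\vc n$ from a suitable row/column of $\mt g$, partitioning $\Omega$ according to which diagonal entry $g_{ii}$ is nonzero; then $\vc a:=\mt f\vc n$ completes the factorisation. You instead fix $\vc a$ first as the first nonzero column $\mt f\vc e_{j(\vc x)}$ of $\mt f$, and recover the unique $\tilde{\vc n}$ with $\mt f=\vc a\otimes\tilde{\vc n}$ via $\tilde{\vc n}:=\mt f^T\vc a/|\vc a|^2$; the identity $\tilde{\vc n}\cdot\vc e_{j(\vc x)}=1$ is correct and gives $|\tilde{\vc n}|\geq 1$. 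Both are explicit Borel functions of $\mt f$, so the measurability content is the same; yours avoids having to treat the degenerate set $\{\tr(\mt f^T\mt f)=0\}$ separately since you handle $N=\{\mt f=\mt 0\}$ directly.

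The slip: from $\mt f=\vc a\otimes\tilde{\vc n}$ and $\vc n:=\tilde{\vc n}/|\tilde{\vc n}|$ you get $\mt f=\bigl(|\tilde{\vc n}|\vc a\bigr)\otimes\vc n$, so the correct replacement is $\vc a\mapsto |\tilde{\vc n}|\,\vc a$, not $\vc a/|\tilde{\vc n}|$; as written, your last display gives $\vc a\otimes\vc n=|\tilde{\vc n}|^{-2}\mt f$, which is not $\mt f$ unless $|\tilde{\vc n}|=1$. Consequently the boundedness argument you sketched (using $|\tilde{\vc n}|\geq 1$ to control a division) is aimed the wrong way. With the corrected rescaling the $L^\infty$ bound is in fact immediate and does not need $|\tilde{\vc n}|\geq1$ at all: since $|\vc n|=1$, one has $\bigl||\tilde{\vc n}(\vc x)|\vc a(\vc x)\bigr|=\|\mt f(\vc x)\|\leq\|\mt f\|_{L^\infty}$ for the Frobenius (or operator) norm. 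Once you fix this one line, the proof is correct.
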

\begin{proof}
This is just a matter of measurability of $\vc a,\vc n$. As $\mt f$ is measurable, so is $\mt f^T\mt f= (|\vc b|^2 \vc m\otimes\vc  m)$, so is its trace $\tr(\mt f^T\mt f)=|\vc b|^2|\vc m|^2$ and so is the function 
\[
\mt g := \begin{cases}
|\vc b|^{-2}|\vc m|^{-2}\mt f^T\mt f,\qquad &\text{ if $|\vc b|^2|\vc m|^2\neq 0$,}\\
0,\qquad &\text{ otherwise}.
\end{cases}
\]
Therefore, we define $\Omega_1:=\{\vc x\in\Omega\colon g_{11}(\vc x) \neq 0\}$ and
$$\vc n(\vc x) = \bigl((g_{11}(\vc x))^{\frac12},\;g_{12}(\vc x)(g_{11}(\vc x))^{-\frac12},\;g_{13}(\vc x)(g_{11}(\vc x))^{-\frac12}\bigr)^T,$$
for almost every $\vc x\in\Omega_1$. This is actually possible because $g_{ii} \geq 0$ a.e. in $\Omega$, for $i=1,2,3$. Define also
$$
\Omega_2:=\{\vc x\in\Omega\setminus \Omega_1 \colon g_{22} \neq 0\},\qquad \Omega_3:=\{\vc x\in\Omega\setminus (\Omega_1\cup\Omega_2) \colon g_{33} \neq 0\},
$$
and define $n$ in $\Omega_2$, $\Omega_3$ respectively by
\[
\begin{split}
\vc n = \bigl(g_{21}(g_{22})^{-\frac12},\;(g_{22})^{\frac12},\;g_{32}(g_{22})^{-\frac12}\bigr)^T,\\ 
\vc n = \bigl(g_{31}(g_{33})^{-\frac12},\;g_{32}(g_{33})^{-\frac12},\;(g_{33})^{\frac12}\bigr)^T.
\end{split}
\]
Therefore, choosing $\vc n$ arbitrarily and such that $|\vc n|=1$ in the set where $\mt g = \mt 0$, we have constructed $\vc n\in L^\infty(\Omega;\R^3)$ as desired. Defining
$
\vc a := \mt f\vc n,
$
we thus conclude the proof.
\end{proof}

\begin{proof}[Proof of Theorem \ref{curved Hadamard}]
We first prove \eqref{r1 curve}.\\
Let $N_{\vc z}$ be the set where $\vc z$ is not differentiable and remark that, by the hypotheses, $N:=N_{\vc z}\cup (\Omega\setminus\mathcal B)$ is an $\mathscr L^3$-negligible set. Let $\vc x_0\in\Omega\setminus N$ and take $\mathcal U_{\vc x_0}$ to be a neighbourhood of $\vc x_0$ as in Definition \ref{DEf inter} (\ref{def mi 2}). By taking a smaller connected neighbourhood of $\vc x_0$, which we still denote by $\mathcal U_{\vc x_0}$, we can assume that $\mathcal G_{\vc x_0}\cap \mathcal U_{\vc x_0}$ is connected. 
{We first claim that $\vc z$ is constant on $\mathcal{G}_{\vc x_0}\cap \mathcal U_{\vc x_0}$. Indeed, as} 
$\nabla \vc Z(\vc x,t^*)=0$ a.e. in $\Omega_A(t^*)$, the continuity of $\vc Z(\vc x,t^*)$ implies that $\vc Z(\vc x,t^*)$ must be constant on every connected component of $\overline{\Omega}_A(t^*)$. Since Definition \ref{DEf inter} (\ref{def mi 2}) implies $\mathcal G_{\vc x_0}\cap \mathcal U_{\vc x_0}\subset \overline\Omega_A(t^*)$, we must have $\vc Z(\cdot,t^*)=\hat{\vc c}$ for some $\hat{\vc c}\in \R^3$ on $\mathcal G_{\vc x_0}\cap \mathcal U_{\vc x_0}$. On the other hand, continuity of $\vc z, \vc Z(\cdot,t^*)$ together with \eqref{Z cond 2} imply that on every connected component of $\overline{\Omega}_M(t^*)$ $\vc z = \vc Z(\cdot,t^*)+\bar{\vc c}$ for some $\bar{\vc c}\in\R^3$ depending on the connected component. Therefore, as by Definition \ref{DEf inter} (\ref{def mi 2}) $\mathcal G_{\vc x_0}\cap \mathcal U_{\vc x_0}\subset \overline\Omega_M(t^*)$, the fact that $\vc Z(\cdot,t^*)$ is constant on $\mathcal G_{\vc x_0}\cap \mathcal U_{\vc x_0}$ implies that so must  be $\vc z$.\\

%
Now, as $\mathcal G_{\vc x_0}$ is a Lipschitz $2$-graph, we can find a Lipschitz change of coordinates 
$\boldsymbol \psi \colon\mathcal U_{\vc x_0}\to V$ such that
$$ 
\boldsymbol \psi(\mathcal G_{\vc x_0}\cap \mathcal U_{\vc x_0})=\bigl\{ \vc x\in\R^3\colon \vc x\cdot \vc n(\vc x_0)=c_\Gamma\bigr \}\cap V,
$$ for some {open connected} $V\subset\mathbb R^3$, $c_\Gamma\in\mathbb{R}$ and where $\vc n(\vc x_0)$ is the normal vector to $\mathcal G_{\vc x_0}$ at ${\vc x_0}$ pointing outwards from $\Omega_M(t^*)$. Let us denote $\boldsymbol \psi(\vc x)=\bar{\vc x}$ for every $\vc x\in \mathcal{U}_{\vc x_0}$. We define $\bar{\vc z}$ as $\bar{\vc z}(\bar{\vc x})=\vc z(\boldsymbol \psi^{-1}(\bar{\vc x}))$, and assuming without loss of generality that $\vc n(\vc x)=\vc e_3$, we get that
$$
\bar{\vc z}(\bar {\vc x}_0)=\bar{\vc z}(\bar{\vc  x}_0+s\vc e_i)=\hat{\vc  c},\qquad i=1,2, 
$$
for each $s$ such that $\bar{\vc  x}_0+s\vc e_i\in V.$ This is due to the fact that $\vc z(\vc x)=\hat {\vc c}$ for every $\vc x\in \mathcal U_{\vc x_0}\cap\mathcal{G}_{\vc x_0}$. 
Therefore,
$$
\frac{\partial \bar{\vc  z}}{\partial \bar{x}_i}(\bar{\vc  x})=0,\qquad i=1,2.
$$
On the other hand, as $\vc x_0\in\Omega\setminus N$, we have
$$
\nabla_\vc x \vc z(\vc x_0)=\nabla_{\bar{\vc x}}\bar {\vc z}(\boldsymbol\psi(\vc x_0))\nabla_\vc x\boldsymbol\psi(\vc x_0).
$$
Since $\vc x_0$ is a regular point, $\boldsymbol\psi$ can be chosen to be differentiable in $\vc x_0$, and therefore $\bar{ \vc x}_0$ is a point of differentiability for $\bar{\vc z}$. Therefore, there exists $\vc a\in\mathbb R^3$ such that
$$
\nabla_{\bar{\vc x}}\bar{\vc z}(\bar{\vc x}_0)=\vc a\otimes \vc n(\vc x_0).
$$
By putting together the last two identities and using the fact that $N$ is negligible we finally deduce \eqref{r1 curve}. Measurability of $\vc a,\vc n$ follows from Lemma \ref{measurable}. \\

We now prove the second statement. 
We first remark that since $\vc c$ is absolutely continuous and of finite length, it belongs also to $W^{1,1}(I_T;\R^3)$ and there exists $\vc d\in W^{1,\infty}(I_T^*;\R^3)$ for some interval $I^*_T\subset\R$ such that $\vc c(I_T)=\vc d(I^*_T)$ (see e.g., \cite{ABC} and references therein). Therefore $\vc c(I_T)$ is $1$-rectifiable.
By Theorem \ref{Fed thm}, $\Gamma(t):=\vc z^{-1}(\vc c(t))\cap\Omega$ are $2$-rectifiable surfaces for almost every $t$, and $\vc z$ is equal to a constant on them. Defining
\beq
\label{omeghi}
\begin{split}
\Omega_M(t) &:=\bigl\{\vc x\in\Omega\colon  \exists s\in[0,t) \text{ such that } \vc x\in \vc z^{-1}(\vc c(s)) \bigr\},\\
 \Omega_A(t)&:=\bigl\{\vc x\in\Omega\colon \vc x\notin \vc z^{-1}(\vc c(s)),\, \forall s\in[0,t] \bigr\},
\end{split}
\eeq
it is easy to see that Definition \ref{DEf inter}\eqref{def i} is satisfied, {provided we can show that} $\Omega_A(t),\Omega_M(t)$ are open. To this end, let us fix $t^*\in I_T$, $\hat{\vc x}\in\Omega_M(t^*)$, and let us denote by $s_{\hat{\vc x}}\in [0,t^*)$ the point such that $\vc z(\hat{\vc x}) = \vc c(s_{\hat{\vc x}})$. 
As $\vc z$ is Lipschitz, 
we can define $R:=\frac12 \|\nabla \vc z\|_{L^{\infty}}^{-1}|\vc c(t^*)-\vc z(\hat {\vc x})|$, so that
\beq
\label{a contrad}
|\vc z(\vc x)-\vc z(\hat {\vc x})|\leq \|\nabla \vc z\|_{L^{\infty}} |\vc x- \hat {\vc x}|\leq\frac12 |\vc c(t^*)-\vc z(\hat {\vc x})| = \frac12|\vc c(t^*)-\vc c(s_{\hat{\vc x}})|, 
\eeq
for all $\vc x\in B_R(\hat{\vc x})$. Suppose now that in $B_R(\hat{\vc x})$ there exists a point $\vc x_0$ such that $\vc z(\vc x_0)=\vc c(t_0)$ for some $t_0\geq t$. Then the segment connecting $\vc x_0$ to $\hat{\vc x}$ is still contained in $B_R(\hat{\vc x})$, and its image through $\vc z$ must be a connected part of the image of $\vc c$. But as $\vc c$ is a simple curve, this implies that there exists $\vc x_1\in B_R(\hat{\vc x})$ such that $\vc z(\vc x_1)=\vc c(t^*)$, which is in contradiction with \eqref{a contrad}. 
Therefore, for every $t^*\in I_T$, $\hat{\vc x}\in\Omega_M(t^*)$ there exists an open ball centred at $\hat {\vc x}$ contained in $\Omega_M(t^*)$, and therefore $\Omega_M(t^*)$ is open. The same argument can be used to show that also $\Omega_A(t)$ is open for each $t$. Clearly, $\Gamma(t)$ is sequentially closed in $\Omega$ and $\Omega_M(t)\cup\Gamma(t), \Omega_A(t)\cup\Gamma(t)$ are closed in $\Omega$ as well. In this way we have also shown that $\vc Z(x,t)$ defined as
$$
\vc Z(x,t)= 
\begin{cases}
\vc z(x), \quad &\text{in }\Omega_M(t)\\
\vc c(t), \quad &\text{in }\Omega_A(t)\\
\end{cases}
$$
is in $W^{1,\infty}(\Omega;\mathbb{R}^3)$ for every $t\in I_T$. \\

Now, since $\Gamma(t)$ is $2$-rectifiable for almost every $t$, in order to show that $$\mathscr{L}^3 \bigl( \Omega\setminus(\Omega_A(t)\cup\Omega_M(t))\bigr)=0$$ for every $t\in I_T$ it is sufficient to prove that
$$
\mathcal C:=\bigl \{\vc x\in\Omega\colon \vc x\in \Gamma(t),\,t\in I_T,\,\Gamma(t) \text{ is not $2$-rectifiable}\bigr\} 
$$
has null $\mathscr L^3$ measure. By Theorem \ref{Fed thm}, $\mathcal C$ is the preimage through $\vc z$, which is continuous, of a set of measure zero, and is hence measurable. 
Now, we notice that by choosing $g$ to be the indicator function on $\mathcal C$ in the coarea formula \eqref{coarea}, and identifying $\mathcal{Z}$ with the support of $\vc c$, we have
\beq
\label{coarea use}
0 \leq \mint g(\vc x) |\vc a|\, \mathrm{d}\vc x
=\int_{\mathcal Z}\int_{\vc z^{-1}(\boldsymbol\xi)}g(\vc s)\,\mathrm{d}\mathscr H^2(\vc s)\,\mathrm{d}\mathscr H^1(\boldsymbol\xi)=0
\eeq
as, by Theorem \ref{Fed thm}, this can just happen for a set of measure zero in $\mathcal Z$. 
This, together with the fact that $|\vc a|>0$ a.e., leads to $\mathscr L^3(\mathcal{C})=0$.\\ 

The rest of the proof is devoted to prove that Definition \ref{DEf inter} \eqref{def mi 2} is satisfied. To this aim, we first claim that for every point $\vc x\in \mathcal D$, with
$$
\mathcal D:=\bigl \{\vc x\in\Omega\colon \nabla\vc z(\vc x)\text{ exists, and }\nabla\vc z(\vc x)\neq\mt0\bigr\} ,
$$
there exist a Lipschitz $2$-graph $\mathcal G_\vc x$ which is differentiable at $\vc x$, an open neighbourhood $\mathcal U_\vc x$  and a $t^*\in I_T$ satisfying $\mathcal G_\vc x\cap \mathcal U_\vc x \subset \Gamma(t^*)\cap \mathcal U_\vc x$. 
In order to do that, we would need a generalised version of the constant rank theorem. However we were not able to find a version of it in the literature suitable to our application. We hence strongly exploit the structure of the image of $\vc{z}$ and a weak version of the implicit function theorem. Here and below, given a vector $\vc v\in\R^3$, we denote by $v_i$ its $i-$th component. Let us consider a generic $\hat{\vc x}\in \mathcal D$ and suppose, without loss of generality, that $a_1(\hat{\vc x})\neq 0$ and that $\vc n(\hat{\vc x})=\vc e_3$. In this case, a version of the implicit function theorem as the one in \cite[Thm. E]{Halkin} gives the existence of a connected neighbourhood $\mathcal N$ of $(\hat{x}_1,\hat{x}_2)$, and of a function $\psi\colon\mathcal{N}\to\R$, such that $\psi(\hat{x}_1,\hat{x}_2)=\hat{x}_3$, and $z_1({x}_1,{x}_2,\psi({x}_1,{x}_2))=z_1(\hat{\vc x})$ for every $(x_1,x_2)\in \mathcal{N}$. Furthermore $\psi$ is differentiable in $(\hat{x}_1,\hat{x}_2)$ and hence continuous and Lipschitz in $\mathcal N$, and $\nabla \psi(\hat{x}_1,\hat{x}_2) = \vc 0$. \\
Fixed $\eps = \frac{|a_1(\hat{\vc x})|}2$, the fact that $\vc z$ is differentiable in $\hat{\vc x}$ implies the existence of $\delta>0$ such that
$$
z_1(\hat{\vc x}+\rho \vc e_3)-z_1(\hat{\vc x})=\rho a_1+r\rho,\qquad\forall |\rho|<\delta,
$$
and where $|r|<\eps$. Therefore, 
\beq
\label{diverso sempre}
\begin{split}
z_1(\hat{\vc x}+\rho \vc e_3)>z_1(\hat{\vc x}),\quad\text{if $a_1(\hat{\vc x})\delta>a_1(\hat{\vc x})\rho>0$},\\
z_1(\hat{\vc x}+\rho \vc e_3)<z_1(\hat{\vc x}),\quad\text{if $-a_1(\hat{\vc x})\delta<a_1(\hat{\vc x})\rho<0$},
\end{split}
\eeq
for all $|\rho|<\delta$. This implies the existence of $h>0$ and $\vc c(t^*+h),\vc c(t^*-h)$ in $\vc z(\Omega)$ such that $$c_1(t^*+h)>c_1(t^*)>c_1(t^*-h).$$ Here, $t^*\in I_T$ is such that $\vc z(\hat{\vc x}) = \vc c(t^*)$. Furthermore, since $\vc z$ is Lipschitz, the dependence of $c(t(\vc x)) := \vc z(\vc x)$ is continuous. This together with \eqref{diverso sempre} and the fact that $\vc c$ is simple, imply that the unique path connecting $\vc c(t^*\pm h)$ to $\vc c(t^*)$ must be such that $c_1(t^*+s)>c_1(t^*)>c_1(t^*-s)$ either for every $s\in(0,h)$ or for every $s\in(-h,0)$. 
Suppose now the existence of $(x_1,x_2)\in\mathcal{N}$ such that $\vc z(x_1,x_2,\psi(x_1,x_2)) \neq \vc c(t^*)$. By continuity of $c(t(\vc x))$ there exist $(\tilde x_1,\tilde x_2)\in\mathcal{N}$ such that $\vc z(\tilde x_1,\tilde x_2,\psi(\tilde x_1,\tilde x_2)) = \vc c(t^*+s)$ for some $s$ with $0<|s|<h$. Thus, at the same time we should have $c_1(t^*+s)=c_1(t^*)$ because we are on a level set for $z_1$, and $c_1(t^*+s)\neq c_1(t^*)$, which leads to a contradiction. 
%
%
We hence showed that $c_1$ is constant implies also that $c_2,c_3$ are constants, that is $\vc z({x}_1,{x}_2,\psi({x}_1,{x}_2))=\vc z(\hat{\vc x})=\vc c(t^*)$ for every $(x_1,x_2)\in \mathcal{N}$. This concludes the proof of the claim.
\\

It remains to prove that for all $\vc x\in\mathcal D$, it holds $\Gamma(t^*)\cap \mathcal U_\vc x\subset \overline\Omega_{L}(t^*)\cap\overline\Omega_{R}(t^*)\cap\mathcal{U}_\vc x$, where again $t^*	\in I_T$ is such that $\vc x\in\Gamma(t^*).$
Suppose first that there exists a neighbourhood $\mathcal U$ of ${\vc x_0}\in \Gamma(t^*)$ for some $t^*\in I_T$ such that 
\beq
\label{sat 11}
\Omega_A(t^*)\cap\mathcal U=\varnothing\qquad \text{ or }\qquad \Omega_M(t^*)\cap\mathcal U=\varnothing.
\eeq 
This is $\vc z(\vc x_0)=\vc c(t^*)$, and $\vc z(\vc x) =\vc c(s(\vc x))$ with $s(\vc x)>t^*$ or $s(\vc x)<t^*$ for every $\vc x \in \mathcal{U}.$ We want to prove that either $\vc z$ is not differentiable in $\vc x_0$, or $\nabla\vc z(\vc x_0)=\mt0$. Suppose not, then there exists $\beta_j\in\R\setminus\{0\}$ and a unit vector $\vc v_j$ 
such that $\nabla z_j(\vc x_0)\cdot \vc v_j=\beta_j$ for some $j=1,2,3$. Observe also that the differentiability of $\vc z$ implies the existence of $\delta_j\in(0,1)$ such that
\beq
\label{z_j pos}
z_j(\vc x_0+\alpha \vc v_j)- z_j(\vc x_0)-\alpha\beta_j=\alpha r_j(\alpha \vc v_j), \qquad\forall\alpha\colon |\alpha|<\delta_j,
\eeq
for some continuous functions $r_j$ bounded in modulus by $\frac{\beta_j}{2}$. 
This implies that $ z_j(\vc x_0+\alpha \vc v_j)- z_j(\vc x_0)$ has the same sign as $\alpha\beta_j$. Therefore, as $\vc c$ is simple, there exists an interval $(t_{\delta_j},t^*)$ (or $(t^*,t_{\delta_j})$) where $c_j(t)-c_j(t^*)$ is both strictly positive and strictly negative for every $t\in(t_{\delta_j},t^*)$ (or in $(t^*,t_{\delta_j})$), thus leading to a contradiction. Therefore, if $\vc z$ is differentiable at $\vc x_0\in\Gamma(t^*)$ and $\nabla\vc z(\vc x_0)\neq \mt0$, then $\vc x_0\in\overline\Omega_A(t^*)\cap\overline\Omega_M(t^*)$. 

Now, by the coarea formula \eqref{coarea} with $g$ chosen to be the characteristic function of $\mathcal D$, we notice that
$$
0 = \int_{\mathcal Z}\int_{f^{-1}(\boldsymbol\xi)\cap\Omega}g(\vc s)\,\mathrm{d}\mathscr H^2(\vc s)\,\mathrm{d}\mathscr H^1(\boldsymbol\xi),
$$
and we deduce that $\vc z$ is differentiable with $\nabla\vc z\neq\mt0$ for $\mathscr H^2$-almost every $\vc x\in\Gamma(t)$ for almost every $t\in I_T$. Let us call $J_T$ the subset of $I_T$ such that $\vc x\in \mathcal D$ $\mathscr H^2$-almost everywhere in $\Gamma(t)$ for every $t\in J_T$. By arguing as above for the set $\mathcal C$, the coarea formula implies that the set of $\vc x\in\Omega$ such that $\vc z(\vc x)\in \vc c(I_T\setminus J_T)$ has measure zero. 
We can hence focus without loss of generality on $\Gamma(t^*)$ for some $t^*\in J_T$. 
Suppose now that there does not exist a neighbourhood of $\vc x_s\in\Gamma(t^*)$ such that $\Gamma(t^*)\cap\mathcal U\subset\overline\Omega_A(t^*)\cap\overline\Omega_M(t^*)$. In this case, as $\Gamma(t^*)$ is closed in $\Omega$, there exists 
a neighbourhood $\mathcal U_s$ of $\vc x_s$ satisfying \eqref{sat 11}. 
However, as $\nabla\vc z$ exists and is non null $\mathscr H^2$ almost everywhere on $\Gamma(t^*)$, there exists $\vc x_a\in\Gamma(t^*)\cap\mathcal U_s$, $\vc x_a\in\mathcal{D}$, and which must hence be in $\overline\Omega_A(t^*)\cap\overline\Omega_M(t^*)$, thus leading to a contradiction. We have therefore proved that the constructed family of moving interfaces $\Gamma(t)$ satisfies the condition in Definition \ref{DEf inter} (\ref{def mi 2}), which concludes the proof.
\end{proof}

The following corollaries are straightforward consequences of the above theorem:
{
\begin{corollary}
\label{reg mov mask}
Let $\vc y\in W^{1,\infty}(\Omega;\R^3)$ satisfy a regular moving mask approximation. Then, there exist $\vc a \in L^\infty(\Omega;\R^3),\vc n\in L^\infty(\Omega;\mathbb{S}^2)$ such that
\beq
\label{mov mask}
\nabla \vc y = \mt Q + \vc a(\vc x)\otimes\vc n(\vc x),\qquad \text{a.e. $\vc x\in \Omega$.}
\eeq
Conversely, if $\vc y\in W^{1,\infty}(\Omega;\R^3)$ satisfies \eqref{mov mask} for some $\mt Q\in SO(3),\vc a \in L^\infty(\Omega;\R^3),\vc n\in L^\infty(\Omega;\mathbb{S}^2)$ and $\vc z(\Omega)$, with $\vc z(\vc x)=\vc y(\vc x)-\mt Q\vc x$, is contained in an absolutely continuous simple curve of finite length, then $\vc y$ satisfies a regular moving mask approximation. 
\end{corollary}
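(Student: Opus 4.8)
Both implications are obtained from Theorem \ref{curved Hadamard} by passing from the deformation map $\vc y$ to the displacement $\vc z:=\vc y-\mt Q\vc x$, for which $\nabla\vc z=\nabla\vc y-\mt Q$; the whole proof consists in matching the data of Definition \ref{defi mi} with those of Definition \ref{DEf inter} and of Theorem \ref{curved Hadamard} under this substitution. For the first implication, a regular moving mask approximation supplies, by definition, a rotation $\mt Q\in SO(3)$ and, for every $t\in[0,T]$, a map $\vc y_M(\cdot,t)\in W^{1,\infty}(\Omega;\R^3)$ with $\nabla\vc y_M(\cdot,t)$ equal to $\nabla\vc y$ a.e.\ on $\Omega_M(t)$ and to $\mt Q$ a.e.\ on $\Omega_A(t)$, together with the property that $\Gamma(t)=\Omega\setminus(\Omega_A(t)\cup\Omega_M(t))$ is a family of moving interfaces. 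Setting $\vc Z(\vc x,t):=\vc y_M(\vc x,t)-\mt Q\vc x$ we get $\vc Z(\cdot,t)\in W^{1,\infty}(\Omega;\R^3)$ with $\nabla\vc Z(\cdot,t)$ equal to $\nabla\vc z$ a.e.\ on $\Omega_M(t)$ and to $\mt 0$ a.e.\ on $\Omega_A(t)$, i.e.\ \eqref{Z cond 2} holds for every $t$. The first part of Theorem \ref{curved Hadamard} then applies and produces $\vc a\in L^\infty(\Omega;\R^3)$ and $\vc n\in L^\infty(\Omega;\mathbb{S}^2)$ with $\nabla\vc z=\vc a\otimes\vc n$ a.e., and adding $\mt Q$ gives \eqref{mov mask}.

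For the converse, \eqref{mov mask} says precisely that $\vc z=\vc y-\mt Q\vc x\in W^{1,\infty}(\Omega;\R^3)$ satisfies \eqref{r1 curve}, and by hypothesis $\vc z(\Omega)$ is contained in the image of an absolutely continuous simple curve of finite length, so the second part of Theorem \ref{curved Hadamard} applies. It produces a family of moving interfaces $\Gamma(t)$ in $\Omega$ --- hence open disjoint sets $\Omega_M(t)$, non-decreasing in $t$, and $\Omega_A(t)$ with $\mathscr{L}^3(\Omega\setminus(\Omega_A(t)\cup\Omega_M(t)))=0$ for every $t\in[0,T]$ --- and a $\vc Z(\cdot,t)\in W^{1,\infty}(\Omega;\R^3)$ satisfying \eqref{Z cond 2}. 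Putting $\vc y_M(\vc x,t):=\vc Z(\vc x,t)+\mt Q\vc x$ one verifies the clauses of Definition \ref{defi mi} in turn: openness, disjointness and the null complement are those just listed; $\Omega_M(t)$ non-decreasing gives $\Omega_A(t_2)\subset\Omega_A(t_1)$ for $t_1\le t_2$, which is one of the two admissible alternatives; $\vc y_M(\cdot,t)\in W^{1,\infty}(\Omega;\R^3)$ has the prescribed gradient by construction; and the requirement that a.e.\ $\vc x$ lie in $\overline\Omega_A(t)\cap\overline\Omega_M(t)$ for some $t$ is already part of the definition of a family of moving interfaces (Definition \ref{DEf inter}(\ref{def mi 2})). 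Since $\Gamma(t)$ is such a family, the resulting moving mask approximation is, by definition, regular.

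The argument is thus essentially bookkeeping; the one point that actually requires attention is that the converse half of Theorem \ref{curved Hadamard} carries the extra hypothesis $|\vc a|>0$ a.e.\ in $\Omega$, which does not appear in the corollary. In the martensitic setting this is harmless, since $\nabla\vc y\neq\mt Q$ on the transformed region; to cover the general case one should either include this hypothesis, or observe that on $\{\vc a=\vc 0\}$ one has $\nabla\vc y=\mt Q$, so that no interface need pass through it, then run the construction of Theorem \ref{curved Hadamard} on the complement of $\{\vc a=\vc 0\}$ and complete $\Omega_M(t)$ and $\Omega_A(t)$ there in a consistent monotone way.
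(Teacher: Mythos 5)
Your proof follows the same route the paper implicitly intends --- the corollary is declared a ``straightforward consequence'' of Theorem~\ref{curved Hadamard} without any written argument, and your bookkeeping (passing between $\vc y$ and $\vc z=\vc y-\mt Q\vc x$, and between $\vc y_M$ and $\vc Z$, then matching the items of Definition~\ref{defi mi} with Definition~\ref{DEf inter}) fills it in correctly in both directions.

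The one substantive remark you make is also a genuine catch: the converse half of Theorem~\ref{curved Hadamard} does carry the hypothesis $|\vc a|>0$ a.e., which Corollary~\ref{reg mov mask} as printed does not state, so the corollary is not a literal specialisation of the theorem. Your two suggested remedies are in the right spirit but unequal in rigour. Adding $|\vc a|>0$ a.e.\ as a hypothesis is clean and, for the paper's actual use of the result, costless: there $\nabla\vc y\in K^{qc}$, $\det\nabla\vc y=\mathfrak D\neq 1$, and $\vc a\cdot\vc n=\mathfrak D-1\neq 0$, so $\vc a\neq\vc 0$ automatically; note though that the corollary as stated makes no reference to $K^{qc}$, so the phrase ``in the martensitic setting this is harmless'' does not by itself repair the abstract statement. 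The alternative you sketch --- run the construction on $\Omega\setminus\{\vc a=\vc 0\}$ and complete $\Omega_M(t),\Omega_A(t)$ ``in a consistent monotone way'' --- is not straightforward. Where $\{\vc a=\vc 0\}$ has nonempty interior, $\vc z$ is locally constant there, the corresponding level set of $\vc z$ has positive Lebesgue measure and cannot serve as a $\Gamma(t)$, so one must indeed switch to an entirely different foliation (e.g.\ parallel planes) inside that region and stitch it to the level-set foliation on its boundary; this works in simple geometries (a slab) but is nontrivial to do while keeping $\Gamma(t)$ a family of moving interfaces and $\vc Z(\cdot,t)$ Lipschitz. If $\{\vc a=\vc 0\}$ has positive measure but empty interior (a fat Cantor-type set), it is not clear the patching can be carried out at all. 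So the cleaner fix is the first one: state $|\vc a|>0$ a.e.\ as a hypothesis of the converse.
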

}
\begin{corollary}
\label{Coroll 1}
Let $T>0$, $\Gamma(t)$ be a family of moving interfaces and $\overline{\Omega}_A(t),$ $\overline\Omega_M(t)$ be connected for every $t\in(0,T)$. Then, $\vc Z\in W^{1,\infty}(\Omega,\R^3)$ for each $t\in (0,T)$ satisfying \eqref{Z cond 2} is equal to 
$$
\vc Z(x,t)= 
\begin{cases}
\vc z(x)+\vc c_1(t), \quad &\text{in }\Omega_M(t),\\
\vc c_2(t), \quad &\text{in }\Omega_A(t),
\end{cases}
$$
for some $\vc z\in W^{1,\infty}(\Omega,\R^3)$ such that $\nabla \vc z =\vc a\otimes\vc  n$ almost everywhere. 
\end{corollary}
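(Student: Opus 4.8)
This is an immediate consequence of Theorem~\ref{curved Hadamard} together with the elementary fact that a Lipschitz function whose gradient vanishes a.e.\ on an open set is constant on each connected component of that set. First, since $\Gamma(t)$ is a family of moving interfaces in $\Omega$ and, by hypothesis, $\vc z\in W^{1,\infty}(\Omega;\R^3)$ while the function $\vc Z=\vc Z(\vc x,t)$ of \eqref{Z cond 2} lies in $W^{1,\infty}(\Omega;\R^3)$ for each $t\in(0,T)$, the first part of Theorem~\ref{curved Hadamard} applies verbatim and produces $\vc a\in L^\infty(\Omega;\R^3)$, $\vc n\in L^\infty(\Omega;\mathbb{S}^2)$ with $\nabla\vc z=\vc a\otimes\vc n$ a.e.\ in $\Omega$. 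This already supplies the map $\vc z$ and the rank-one decomposition demanded by the statement, so it remains only to identify $\vc Z(\cdot,t)$ on the two phases.

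Next I would fix $t\in(0,T)$ and work with the Lipschitz (hence continuous on $\overline\Omega$) representative of $\vc Z(\cdot,t)$. By \eqref{Z cond 2}, $\nabla\vc Z(\cdot,t)=\mt 0$ a.e.\ on the open set $\Omega_A(t)$, so $\vc Z(\cdot,t)$ is constant on each connected component of $\Omega_A(t)$; continuity propagates this to each connected component of $\overline\Omega_A(t)$, exactly as in the proof of Theorem~\ref{curved Hadamard}. Since $\overline\Omega_A(t)$ is connected by hypothesis, there is a single vector $\vc c_2(t)\in\R^3$ with $\vc Z(\cdot,t)\equiv\vc c_2(t)$ on $\overline\Omega_A(t)\supseteq\Omega_A(t)$. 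Symmetrically, $\nabla\bigl(\vc Z(\cdot,t)-\vc z\bigr)=\mt 0$ a.e.\ on the open set $\Omega_M(t)$, so $\vc Z(\cdot,t)-\vc z$ is constant on each component of $\Omega_M(t)$ and, by the same continuity-plus-connectedness argument and the connectedness of $\overline\Omega_M(t)$, equal to a single vector $\vc c_1(t)\in\R^3$ on $\overline\Omega_M(t)$; hence $\vc Z(\cdot,t)=\vc z+\vc c_1(t)$ on $\Omega_M(t)$. The vectors $\vc c_1(t),\vc c_2(t)$ are thus defined for each $t$ (no regularity in $t$ is asserted), and assembling the two descriptions gives the claimed formula for $\vc Z$.

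The only point deserving attention is the passage from ``constant on each connected component of the open set $\Omega_A(t)$'' to ``constant on the connected set $\overline\Omega_A(t)$'' (and likewise for $\Omega_M(t)$), since $\Omega_A(t)$ and $\Omega_M(t)$ need not themselves be connected: one must use that the continuous Lipschitz extension takes a single value on a component of the closure whenever it is locally constant on the dense open subset it contains. This is precisely the step already invoked in the proof of Theorem~\ref{curved Hadamard}, and the same elementary argument applies here; alternatively, and more transparently, one may add the harmless and physically natural hypothesis that $\Omega_A(t)$ and $\Omega_M(t)$ are connected, under which this step is immediate. In either case the corollary follows at once from Theorem~\ref{curved Hadamard} and the two phase-by-phase computations above.
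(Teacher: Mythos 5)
Your proof takes essentially the same route as the paper: apply the first part of Theorem~\ref{curved Hadamard} to obtain $\nabla\vc z=\vc a\otimes\vc n$, then for each fixed $t$ combine the vanishing of $\nabla\vc Z(\cdot,t)$ a.e.\ on $\Omega_A(t)$ (resp.\ of $\nabla(\vc Z(\cdot,t)-\vc z)$ a.e.\ on $\Omega_M(t)$) with continuity of the Lipschitz representative and the assumed connectedness of $\overline\Omega_A(t)$, $\overline\Omega_M(t)$. The paper's own proof is a one-line appeal to precisely these three ingredients, so your more explicit version matches it, and your remark that the continuity-plus-connectedness-of-closure step is the same one already used in the proof of Theorem~\ref{curved Hadamard} is accurate.
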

\begin{proof}
The statement follows directly from Theorem \ref{curved Hadamard}, the fact that $\vc Z(\vc x,t)$ is continuous in $\vc x$ for each $t$ and the hypothesis that $\overline\Omega_A(t), \overline\Omega_M(t)$ are connected. 
\end{proof}

{Corollary \ref{Coroll 1} hence implies that, under the above hypotheses, for each $t\in I_T$, the interface $\Gamma(t)$ is a subset of $\{\vc x\in\Omega\colon \vc z(\vc x)=\vc c_2(t)-\vc c_1(t)\} 
$, that is of a level set of $\vc z.$ This also means that the image of $\vc z$ is a one-dimensional curve. 
However $\Gamma(t)$ does not need to coincide with the family of moving interfaces constructed in the proof of Theorem \ref{curved Hadamard}, even if $\vc c_2-\vc c_1$ is absolutely continuous, simple and of finite length. 
Indeed, in the proof of Theorem \ref{curved Hadamard} the phase interfaces must be constructed as subsets of level sets for $\vc z$, but the construction of $\Omega_A(t),\Omega_M(t)$ is arbitrary and could be done differently. 
For example one could swap the definition of $\Omega_A(t)$ and $\Omega_M(t)$ in \eqref{omeghi}, or replace $s\in[0,t)$ and $s\in[0,t]$ in the definition of $\Omega_M$ and $\Omega_A$ respectively with $|s-t_0|< t$ and $s\in I_T\setminus [t_0-t,t_0+t]$ for some $t_0\in I_T$, thus getting a different family of moving interfaces.\\}
\indent 
The next corollary gives some information about the interface velocity. We define the normal velocity of $\Gamma(t^*)$ at time $t^*\in I_T$ and at $\vc x\in\Gamma(t^*)$, namely $(\vc v\cdot \vc n)(\vc x,t^*)$, as $\dot{\boldsymbol \gamma}(t^*)\cdot \vc n(\vc x,t^*)$, where $\vc n(\vc x,t^*)$ is the unit normal to $\Gamma(t^*)$ at $\vc x\in\Gamma(t^*)$, and $\boldsymbol \gamma(t)$ is a generic absolutely continuous path differentiable at $t^*$ such that $\boldsymbol \gamma(t)\in\Gamma(t)$ for each $t\in I_T$ and $\boldsymbol \gamma(t^*)=\vc x$. Clearly $(\vc v\cdot \vc n)(\vc x,t^*)$ is well defined if its value is independent of the choice of $\boldsymbol \gamma$ among the admissible paths, and if $\vc n(\vc x,t^*)$ is well defined. 

\begin{corollary}
\label{Coroll 2}
Let $\vc z\in W^{1,\infty}(\Omega,\R^3)$ satisfy \eqref{r1 curve}, $|a|>0$ a.e. in $\Omega$, and $\vc z(\Omega)$ be contained in the image of an absolutely continuous simple curve $\vc c:I_T\to\R^3$ of finite length. Assume further that $\Gamma(t)$ is the family of moving interfaces constructed in the proof of Theorem {\rm\ref{curved Hadamard}}. Then, the normal velocity of $\Gamma(t)$ at a point $\vc x\in \Gamma(t)$, denoted $(\vc v\cdot \vc n)(\vc x,t)$, satisfies
%
\beq
\label{velocita}
\vc a(\vc x)(\vc v\cdot \vc n)(\vc x,t)=\dot{\vc  c}(t),{
\qquad\text{a.e. }t\in (0,T),\,\mathscr H^2\text{-a.e. }\vc x\in\Gamma(t).}
\eeq
\end{corollary}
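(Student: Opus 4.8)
The plan is to exploit that, by construction in the proof of Theorem \ref{curved Hadamard}, $\Gamma(t)=\vc z^{-1}(\vc c(t))\cap\Omega$, so that $\vc z$ is constant, equal to $\vc c(t)$, on $\Gamma(t)$; differentiating this relation along an admissible path will immediately produce \eqref{velocita}. First I would fix a ``good'' pair $(t^*,\vc x)$: a time $t^*\in(0,T)$ at which $\dot{\vc c}(t^*)$ exists (a.e.\ $t$, since $\vc c$ is absolutely continuous), and a point $\vc x\in\Gamma(t^*)$ at which $\vc z$ is differentiable with $\nabla\vc z(\vc x)=\vc a(\vc x)\otimes\vc n(\vc x)$, $|\vc a(\vc x)|>0$, and $\Gamma(t^*)$ admits an approximate tangent plane. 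Using that $J_1\vc z=|\vc a|$ together with the coarea formula \eqref{coarea} and the $2$-rectifiability of $\Gamma(t)$ for a.e.\ $t$ (exactly as in the treatment of the sets $\mathcal C$ and $J_T$ in the proof of Theorem \ref{curved Hadamard}), such $\vc x$ form an $\mathscr H^2$-full subset of $\Gamma(t^*)$ for a.e.\ $t^*\in(0,T)$.

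Next I would identify the normal. Since $|\vc a(\vc x)|>0$, pick a component index $j$ with $a_j(\vc x)\neq 0$; then $z_j$ is differentiable at $\vc x$ with $\nabla z_j(\vc x)=a_j(\vc x)\vc n(\vc x)\neq\vc 0$, and $\Gamma(t^*)\subset z_j^{-1}(c_j(t^*))$. Applying the weak implicit function theorem (as in the proof of Theorem \ref{curved Hadamard}, via \cite[Thm. E]{Halkin}) to $z_j$ shows that near $\vc x$ the scalar level set $z_j^{-1}(c_j(t^*))$ is a Lipschitz $2$-graph differentiable at $\vc x$ with tangent plane $\vc n(\vc x)^\perp$; since $\Gamma(t^*)$ is contained in it and has a $2$-dimensional approximate tangent plane at $\vc x$, that plane must coincide with $\vc n(\vc x)^\perp$, i.e.\ the unit normal $\vc n(\vc x,t^*)$ to $\Gamma(t^*)$ at $\vc x$ equals $\pm\vc n(\vc x)$.

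Then I would take any admissible path $\boldsymbol\gamma$, that is, an absolutely continuous curve with $\boldsymbol\gamma(t)\in\Gamma(t)$ for all $t\in I_T$, $\boldsymbol\gamma(t^*)=\vc x$, and $\boldsymbol\gamma$ differentiable at $t^*$. By definition of $\Gamma(t)$ we have $\vc z(\boldsymbol\gamma(t))=\vc c(t)$ for all $t$; differentiating at $t^*$ (legitimate because $\vc z$ is differentiable at $\vc x$ and $\boldsymbol\gamma$ at $t^*$, with $|\boldsymbol\gamma(t)-\vc x|=O(|t-t^*|)$) gives $\dot{\vc c}(t^*)=\nabla\vc z(\vc x)\,\dot{\boldsymbol\gamma}(t^*)=\vc a(\vc x)\,\bigl(\vc n(\vc x)\cdot\dot{\boldsymbol\gamma}(t^*)\bigr)$. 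In particular $\dot{\vc c}(t^*)$ is parallel to $\vc a(\vc x)$, so the scalar $\vc n(\vc x)\cdot\dot{\boldsymbol\gamma}(t^*)=\dot{\vc c}(t^*)\cdot\vc a(\vc x)/|\vc a(\vc x)|^2$ does not depend on the chosen path; combined with $\vc n(\vc x,t^*)=\pm\vc n(\vc x)$ this shows that $(\vc v\cdot\vc n)(\vc x,t^*)$ is well defined and, choosing the signs of $\vc a,\vc n$ consistently with the orientation of the interface normal, yields $\vc a(\vc x)(\vc v\cdot\vc n)(\vc x,t^*)=\dot{\vc c}(t^*)$.

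I expect the main obstacle to be the second step, namely rigorously pinning down that the approximate normal to the level-set family $\Gamma(t)$ is $\pm\vc n$ at $\mathscr H^2$-a.e.\ point (and checking that admissible paths actually exist, so that the normal velocity is not vacuously defined), since this is where the mere Lipschitz regularity of $\vc z$, the implicit-function-theorem argument, and the coarea bookkeeping from the proof of Theorem \ref{curved Hadamard} all have to be reused with care; the differentiation step producing \eqref{velocita} itself is routine once the setup is in place.
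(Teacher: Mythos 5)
Your proposal is correct and follows essentially the same route as the paper: reduce to $\mathscr H^2$-full subsets of $\Gamma(t)$ for a.e.\ $t$ via the coarea formula (taking $g$ as the indicator of the bad set of non-differentiability), identify the interface normal at such points with $\pm\vc n(\vc x)$, and then differentiate the identity $\vc z(\boldsymbol\gamma(t))=\vc c(t)$ along an admissible path using the chain rule at a point of differentiability of $\vc z$. The only difference in emphasis is that where the paper disposes of the normal-identification step with a one-line reference ``as showed in the proof of Theorem 4.1,'' you spell it out explicitly via the Halkin implicit-function-theorem argument applied to a nonzero scalar component $z_j$, which is precisely the mechanism used there; your flagged concern about the existence of admissible paths is a legitimate subtlety the paper passes over silently, but it does not change the substance of the argument.
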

\begin{proof}
By the coarea formula \eqref{coarea} with $g$ chosen to be the characteristic function of the set where $z$ is not differentiable and $|a|>0$, we notice that
$$
0 = \Tint\int_{\vc z^{-1}(\vc c(t))\cap\Omega}g(\vc s)\,\mathrm{d}\mathscr H^2(\vc s)|\dot {\vc c}(t)|\,\mathrm{d}t.
$$
As the argument in the integral is non negative, we deduce that $ \vc z$ is differentiable and $|\vc a|>0$ for $\mathscr H^2$-almost every $\vc x\in\Gamma(t)$ for almost every $t\in I_T$. As showed in the proof of Theorem \ref{curved Hadamard} $\vc n$ is well defined for all these $\vc x$. Let us consider $\boldsymbol \gamma(t)$, an absolutely continuous path in $\Omega$ such that $\boldsymbol \gamma(t) \in\Gamma(t)$ for every $t\in I_T$. We have
$$
\vc z(\boldsymbol \gamma(t))=\vc c(t),\qquad \forall t\in(t_0,t_1),
$$
for some $0\leq t_0<t_1\leq T$. {
Taking the time derivative of this identity we get
$$
\dot{\vc  c}(t)=\nabla \vc z(\boldsymbol \gamma(t))\dot {\boldsymbol \gamma}(t) = \vc a(\boldsymbol \gamma(t))\bigl( \dot {\boldsymbol \gamma}(t)\cdot \vc n(\boldsymbol \gamma(t),t)\bigr)=
\vc a(\vc x)\bigl(\dot {\boldsymbol \gamma}(t)\cdot \vc n(\vc x,t)\bigr),
$$
which is the claimed result, as $\bigl(\dot {\boldsymbol \gamma}(t)\cdot \vc n(\vc x,t)\bigr)$ is independent of $\gamma$ chosen for a.e. $t\in I_T$, a.e. $\vc x\in \Gamma(t).$}
\end{proof}

\begin{remark}
\normalfont
An important consequence of the above corollary is that $\frac{\vc a(\vc x)}{|\vc a(\vc x)|}$ is constant $\mathscr H^2$ almost everywhere on $\Gamma(t)$ for almost every $t$. At the same time, there might be jumps in $|\vc a(\vc x)|$ along a single interface and jumps for $\frac{\vc a(\vc x)}{|\vc a(\vc x)|}$ across interfaces. 
\end{remark}

\begin{remark}
\normalfont
If we assume the determinant of $\nabla \vc y=\mt 1+\nabla \vc z=\mt{1}+\vc a\otimes \vc n$ to be a positive constant $\mathfrak{D}$ almost everywhere in $\Omega$, than we can deduce that on almost all interfaces $|\vc a(\vc x)|$ can jump if and only if there is a jump in $\vc n(\vc x)$. Indeed, this is a direct consequence of the following two facts: the first is that the direction of $\vc a(\vc x)$ is fixed on almost all interfaces, the second is that, $\det(\nabla\vc y) = \mathfrak{D}$ a.e. in $\Omega$ implies  $\vc a\cdot \vc n=\mathfrak{D}-1$ a.e. in $\Omega$, and hence, by the coarea formula (see the argument in the proof of Corollary \ref{Coroll 2}), $\vc a\cdot \vc n=\mathfrak{D}-1$, $\mathscr H^2$-almost everywhere on $\Gamma(t)$ for almost all $t$. 
\end{remark}

A different perspective on the velocity of $\Gamma(t)$ is given by
\begin{corollary}
\label{derivata materiale coroll}
Let $\vc z\in W^{1,\infty}(\Omega,\R^3)$ satisfy \eqref{r1 curve}, $|\vc a|>0$ a.e. in $\Omega$, and $\vc z(\Omega)$ be contained in the image of an absolutely continuous simple curve $\vc c:I_T\to\R^3$ of finite length. Assume further that $\Gamma(t)$ is the family of moving interfaces constructed in the proof of Theorem {\rm\ref{curved Hadamard}}. Then,
$$
\langle\dot{\chi}_{\Omega_M},\xi\rangle = |\dot {\vc c}(t)| \int_{\Gamma(t)}\frac{\xi(\vc s)}{|\vc a(\vc s)|}\,\mathrm{d}\mathscr H^2(\vc s),\qquad\forall\xi\in C^0(\Omega), \text{ a.e. $t\in(0,T)$}.
$$
\end{corollary}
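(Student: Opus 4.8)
The plan is to prove the slightly stronger statement that the scalar function $\Phi(t):=\langle\chi_{\Omega_M}(\cdot,t),\xi\rangle=\int_{\Omega_M(t)}\xi(\vc x)\,\mathrm d\vc x$ is absolutely continuous on $[0,T]$ and that, for a.e.\ $t$, $\Phi'(t)$ equals the claimed right-hand side; since $\langle\dot\chi_{\Omega_M},\xi\rangle=\Phi'(t)$ this is the assertion. The whole argument is one application of the coarea formula of Theorem \ref{Fed thm} followed by the length formula for $\vc c$. Two facts are used throughout: first, by the construction in the proof of Theorem \ref{curved Hadamard}, $\Omega_M(t)=\Omega\cap\vc z^{-1}(\vc c([0,t)))$ (which is open there) and $\Gamma(t)=\Omega\cap\vc z^{-1}(\vc c(t))$; second, since $\nabla\vc z=\vc a\otimes\vc n$ with $|\vc n|=1$ a.e., the $1$-Jacobian is $J_1\vc z=\bigl(\sum_{i,j}a_i^2n_j^2\bigr)^{1/2}=|\vc a|$ a.e.\ in $\Omega$.

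First I would fix $t$ and apply \eqref{coarea}, with $\mathcal Z$ identified with the image of $\vc c$, to the function $g=\tfrac{\xi}{|\vc a|}\chi_{\Omega_M(t)}$. Because $1/|\vc a|$ need not lie in $L^1(\Omega)$, one applies \eqref{coarea} first to the bounded truncations $\tfrac{\xi^{\pm}}{|\vc a|}\chi_{\Omega_M(t)\cap\{|\vc a|>1/k\}}$ and lets $k\to\infty$ by monotone convergence, using that $|\vc a|>0$ a.e.\ in $\Omega$ and, again by \eqref{coarea} applied to $\chi_{\{|\vc a|=0\}}$, that $|\vc a|>0$ holds $\mathscr H^2$-a.e.\ on $\vc z^{-1}(\boldsymbol\xi)$ for $\mathscr H^1$-a.e.\ $\boldsymbol\xi$. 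Since $\vc c$ is simple, for $\vc s\in\vc z^{-1}(\boldsymbol\xi)$ with $\boldsymbol\xi=\vc c(\tau)$ one has $\vc s\in\Omega_M(t)\iff\tau<t$, so the restriction of $\chi_{\Omega_M(t)}$ to $\vc z^{-1}(\boldsymbol\xi)$ equals $\chi_{\vc c([0,t))}(\boldsymbol\xi)$; this gives
\[
\Phi(t)=\int_{\vc c([0,t))}\Bigl(\int_{\vc z^{-1}(\boldsymbol\xi)\cap\Omega}\frac{\xi(\vc s)}{|\vc a(\vc s)|}\,\mathrm d\mathscr H^2(\vc s)\Bigr)\,\mathrm d\mathscr H^1(\boldsymbol\xi).
\]

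Next I would use that $\vc c$, being simple and absolutely continuous of finite length, admits an arclength reparametrisation with the same image (as already invoked in the proof of Theorem \ref{curved Hadamard}), so that $\mathscr H^1\mres\vc c([0,t))$ is the push-forward of the measure $|\dot{\vc c}(\tau)|\,\mathrm d\tau$ on $[0,t)$. Applying this — first for $\xi\ge0$, then for general $\xi=\xi^+-\xi^-$, each term being finite because $\int_{\Omega_M(T)}|\xi|\le\|\xi\|_{L^\infty}|\Omega|<\infty$ — and recalling $\vc z^{-1}(\vc c(\tau))\cap\Omega=\Gamma(\tau)$, we obtain
\[
\Phi(t)=\int_0^t|\dot{\vc c}(\tau)|\int_{\Gamma(\tau)}\frac{\xi(\vc s)}{|\vc a(\vc s)|}\,\mathrm d\mathscr H^2(\vc s)\,\mathrm d\tau,\qquad t\in[0,T].
\]
The integrand lies in $L^1(0,T)$ by the same estimate (its integral against $|\xi|$ equals $\int_{\Omega_M(T)}|\xi|$), so by the Lebesgue differentiation theorem $\Phi$ is absolutely continuous with $\Phi'(t)=|\dot{\vc c}(t)|\int_{\Gamma(t)}\xi/|\vc a|\,\mathrm d\mathscr H^2$ for a.e.\ $t$, which is the stated identity. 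Heuristically this is just the transport identity $\langle\dot\chi_{\Omega_M},\xi\rangle=\int_{\Gamma(t)}(\vc v\cdot\vc n)\,\xi\,\mathrm d\mathscr H^2$ together with $(\vc v\cdot\vc n)(\vc x,t)=|\dot{\vc c}(t)|/|\vc a(\vc x)|$ coming from Corollary \ref{Coroll 2}; but the classical transport theorem is not available at the present regularity of $\Gamma(t)$, which is precisely why one argues through the coarea formula.

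The main obstacle is the measure-theoretic bookkeeping rather than any conceptual difficulty: making the coarea formula applicable even though $1/|\vc a|$ is only a.e.\ positive and possibly non-integrable (hence the truncation plus monotone convergence), correctly identifying the trace of $\chi_{\Omega_M(t)}$ on the level sets of $\vc z$ with $\chi_{\vc c([0,t))}$ via simplicity of $\vc c$, and justifying the length formula for the merely absolutely continuous curve $\vc c$ so that the factor $|\dot{\vc c}(t)|$, and not that of some reparametrisation, appears in the final identity.
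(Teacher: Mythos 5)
Your proof is correct and follows essentially the same route as the paper's: apply the coarea formula of Theorem \ref{Fed thm} with $g=\tfrac{\xi}{|\vc a|}\chi_{\Omega_M(t)}$, use $J_1\vc z=|\vc a|$ and simplicity of $\vc c$ to rewrite $\int_{\Omega_M(t)}\xi$ as $\int_0^t|\dot{\vc c}(\tau)|\int_{\Gamma(\tau)}\xi/|\vc a|\,\mathrm d\mathscr H^2\,\mathrm d\tau$, and differentiate in $t$. You spell out the measure-theoretic steps (truncation to handle the possible non-integrability of $1/|\vc a|$, identification of $\chi_{\Omega_M(t)}$ on the level sets with $\chi_{\vc c([0,t))}$, and the length formula converting $\mathscr H^1\mres\vc c(I_T)$ to $|\dot{\vc c}|\,\mathrm d\tau$) that the paper treats as implicit, but the underlying argument is identical.
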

\begin{proof}
We first notice that, by the coarea formula,
\[
\begin{split}
\mint\chi_{\Omega_M}(\vc x,t)\xi(\vc x)\,\mathrm{d}\vc x = \int_0^T\int_{\vc z^{-1}(\vc c(\tau))\cap\Omega_A(t)}\frac{\xi(\vc s)}{|\vc a(\vc s)|}\,\mathrm{d}\mathscr H^2(\vc s)|\dot {\vc c}(\tau)|\,\mathrm{d}\tau \\= 
\int_0^t\int_{\vc z^{-1}(\vc c(\tau))\cap\Omega}\frac{\xi(\vc s)}{|\vc a(\vc s)|}\,\mathrm{d}\mathscr H^2(\vc s)|\dot {\vc c}(\tau)|\,\mathrm{d}\tau.
\end{split}
\]
Therefore,
$$
\dert \mint \chi_{\Omega_M}(\vc x,t)\xi(\vc x)\,\mathrm{d}\vc x =  |\dot {\vc c}(t)| \int_{\vc z^{-1}(\vc c(t))\cap\Omega}\frac{\xi(\vc s)}{|\vc a(\vc s)|}\,\mathrm{d}\mathscr H^2(\vc s).
$$
which is the claimed result.

\end{proof}


\section{Basic properties of microstructures}
\label{Basic}
According to the results of the previous sections, we can restrict our attention to deformation 
gradients satisfying for every $t\in(0,T)$
\[
\begin{cases} \nabla \vc y( \vc x,t) = \mt 1 + \vc a(\vc x)\otimes \vc n(\vc x),\qquad  &\text {a.e. }\vc x\in\Omega_M(t), \\
\nabla \vc y(\vc x,t) = \mt 1,\qquad  &\text{a.e. }\vc x\in\Omega_A(t),
\end{cases}
\]
for some $\vc a(\vc x)\in L^\infty(\Omega,\R^3),\,\vc n(\vc x)\in L^\infty(\Omega,\mathbb{S}^2)$ such that $\vc n(\vc x)$ is normal to the austenite-martensite interface in $\vc x$ at a certain time $t^*\in(0,t)$. Here we assumed without loss of generality to have $\mt Q=\mt 1$ in Definition \ref{defi mi}. In this way the martensitic macroscopic deformation gradient is a function of the moving, possibly curved, austenite-martensite interface during phase transition. In light of the above considerations, we assume that martensitic microstructures arising from austenite to martensite phase transitions are described by deformation gradients $\nabla \vc y$ of the form
\beq
\tag{H1}
\label{h1}
\nabla \vc y(\vc x) = \mt 1 + \vc a(\vc x)\otimes \vc n(\vc x),\qquad\nabla \vc y \in K^{qc},\qquad  \text {a.e. $\vc x\in\Omega.$}
\eeq
As the determinant is constant in $K$, it follows that
\beq
\tag{H2}
\label{h2}
\det \nabla \vc y (\vc x) = \mathfrak{D}\qquad\text {a.e. $\vc x\in\Omega,$}
\eeq
for some constant $\mathfrak{D}>0$. \eqref{h1} and \eqref{h2} imply
\begin{equation}
\label{H2 det}
\vc a(\vc x)\cdot \vc n(\vc x) = \mathfrak{D} - 1,
\eeq
for a.e. $\vc x\in\Omega$, and 
\begin{equation}
\label{H3}
\curl(a_i(\vc x) \vc n(\vc x))=\vc 0\qquad\text{ for each } i=1,2,3,
\end{equation}
in a weak sense. \\

In conclusion, in what follows we look at martensite microstructures as a part of the domain where (H1) and (H2) hold. 
We first begin with an estimate for the norm of $\vc a(\vc x)$:
 \begin{proposition} Let $\lambda_{max}$ and $\lambda_{min}$ be respectively the biggest and the smallest eigenvalues of the martensite deformation matrices $\mt U_i\in K$. Let also $\vc y\in W^{1,\infty}(\Omega;\R^3)$ satisfy \eqref{h1}, \eqref{h2}. Then, we have $$|\mathfrak{D}-1|\leq |\vc a|\leq\lambda_{max}-\lambda_{min},\qquad \text{a.e. in } \Omega.$$
\end{proposition}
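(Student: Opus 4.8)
The lower bound is elementary: since $|\vc n(\vc x)|=1$ a.e.\ in $\Omega$, the Cauchy--Schwarz inequality together with \eqref{H2 det} gives $|\vc a(\vc x)|\geq|\vc a(\vc x)\cdot\vc n(\vc x)|=|\mathfrak{D}-1|$ for a.e.\ $\vc x\in\Omega$. All the work is in the upper bound, and the plan is to first bound the singular values of $\nabla\vc y(\vc x)$ using the hypothesis $\nabla\vc y\in K^{qc}$, and then convert those bounds into a bound on $|\vc a|$ via the rank-one structure.

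For the first step I would test the definition of $K^{qc}$ against two auxiliary functions. The map $\mt F\mapsto\|\mt F\|_{op}^2$ is a composition of the operator norm with $t\mapsto t^2$, hence convex and a fortiori quasiconvex; since $\|\mt R\mt U_i\|_{op}=\|\mt U_i\|_{op}=\lambda_{max}$ for every $\mt R\in SO(3)$ and every $i$, we obtain $\|\nabla\vc y(\vc x)\|_{op}\leq\lambda_{max}$ a.e., i.e.\ the largest singular value of $\nabla\vc y(\vc x)$ is at most $\lambda_{max}$. Likewise $\mt F\mapsto\|\cof\mt F\|_{op}^2$ is a convex function of $\cof\mt F$, hence polyconvex and therefore quasiconvex; using $\cof(\mt R\mt U_i)=\mt R\,\cof\mt U_i$ and the fact that the eigenvalues of the positive definite matrix $\cof\mt U_i$ are the pairwise products of those of $\mt U_i$ (so its largest is $\lambda_2\lambda_{max}$, with $\lambda_2$ the middle eigenvalue), we get $\|\cof\nabla\vc y(\vc x)\|_{op}\leq\lambda_2\lambda_{max}$ a.e. Because \eqref{h2} makes $\nabla\vc y(\vc x)$ invertible with $\cof\nabla\vc y(\vc x)=\mathfrak{D}\,(\nabla\vc y(\vc x))^{-T}$, this says that the smallest singular value of $\nabla\vc y(\vc x)$ equals $\mathfrak{D}/\|\cof\nabla\vc y(\vc x)\|_{op}\geq\mathfrak{D}/(\lambda_2\lambda_{max})=\lambda_{min}$. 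Hence all three singular values of $\nabla\vc y(\vc x)$ lie in $[\lambda_{min},\lambda_{max}]$ for a.e.\ $\vc x$.

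For the second step, fix such an $\vc x$ and write $\mt F=\mt 1+\vc a\otimes\vc n$. The symmetric matrix $\mt F^T\mt F-\mt 1=\vc a\otimes\vc n+\vc n\otimes\vc a+|\vc a|^2\,\vc n\otimes\vc n$ has range contained in the span of $\vc a$ and $\vc n$, a subspace of dimension at most two; therefore $\mt F^T\mt F$ has $1$ as an eigenvalue, i.e.\ $\mt F$ has $1$ as a singular value. Denoting by $\sigma,\tau$ the remaining two singular values, $\det\mt F=\mathfrak{D}>0$ gives $\sigma\tau=\mathfrak{D}$, while $\tr(\mt F^T\mt F)=3+2\,\vc a\cdot\vc n+|\vc a|^2=1+2\mathfrak{D}+|\vc a|^2$ (using \eqref{H2 det}) gives $\sigma^2+\tau^2=2\mathfrak{D}+|\vc a|^2$. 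Subtracting $2\sigma\tau$ yields $(\sigma-\tau)^2=|\vc a|^2$, so $|\vc a|=|\sigma-\tau|\leq\lambda_{max}-\lambda_{min}$ by the first step, which is the desired bound.

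I do not foresee a real obstacle; the points deserving care are the verification that the two auxiliary functions are quasiconvex (so that the defining inequality of $K^{qc}$ may be applied pointwise a.e.), the use of \eqref{h2} to guarantee $\det\mt F>0$ so that the product of the singular values is exactly $\mathfrak{D}$, and the observation that one singular value equals $1$ precisely because the rank-one perturbation acts nontrivially only on a two-dimensional subspace. An equivalent but slightly longer alternative for the second step is to compute $\sigma$ and $\tau$ directly by restricting $\mt F$ to the two-dimensional subspace spanned by $\vc a$ and $\vc n$ and diagonalizing the resulting $2\times2$ block.
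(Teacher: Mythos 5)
Your proof is correct, and the core identity is the same as in the paper: writing $\mt F=\mt 1+\vc a\otimes\vc n$ with singular values $\{1,\sigma,\tau\}$, one uses $\det\mt F=\mathfrak D$ and $\tr(\mt F^T\mt F)=1+2\mathfrak D+|\vc a|^2$ to obtain $|\vc a|=|\sigma-\tau|$, then bounds by $\lambda_{max}-\lambda_{min}$. The paper, however, gets $\sigma_{mid}=1$, $\sigma_{min}\sigma_{max}=\mathfrak D$ and especially the bounds $\lambda_{min}\leq\sigma_{min}\leq1\leq\sigma_{max}\leq\lambda_{max}$ by invoking results from Ball--Carstensen (cited as \cite{BallCarstensen2}), whereas you derive the singular-value bounds from scratch by testing the definition of $K^{qc}$ against the convex (hence quasiconvex) test functions $\mt F\mapsto\|\mt F\|_{op}^2$ and $\mt F\mapsto\|\cof\mt F\|_{op}^2$, using $\cof\nabla\vc y=\mathfrak D(\nabla\vc y)^{-T}$ to convert the cofactor bound into a lower bound on the smallest singular value. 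Your route is more self-contained and makes explicit exactly which quasiconvex constraints are being exploited; the paper's is shorter because it outsources the eigenvalue estimates to the literature. You also avoid needing to know that $1$ is the \emph{middle} singular value, since both remaining singular values lie in $[\lambda_{min},\lambda_{max}]$ regardless of ordering. Both arguments are valid.
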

\begin{proof}
The first inequality follows trivially from Cauchy-Schwarz and the fact that $\vc a\cdot \vc n = \mathfrak{D}-1$. In order to get the other one, we observe that by the polar decomposition theorem we have that 
$$
\nabla \vc y(\vc x) = \mt R(\vc x)\mt F(\vc x),
$$
for almost every $\vc x\in\Omega$, where $\mt R(\vc x)\in SO(3)$ and $\mt F(\vc x)$ is symmetric positive definite. The argument below holds for almost every $\vc x\in\Omega$. By arguing as in \cite{BallCarstensen2} one can deduce that, in order to have a rank-one connection with the identity matrix, the eigenvalues of $\mt F$, namely $\sigma_{min}\leq\sigma_{mid}\leq\sigma_{max}$, must satisfy
$$
\sigma_{mid}=1,\qquad\qquad\sigma_{min}\sigma_{max}=\mathfrak{D}.
$$
Therefore, we have
\begin{equation}
\label{stima 2 norma}
\det \mt F = \mathfrak{D} = \sigma_{min}\sigma_{max},\qquad\qquad
\tr(\mt F^T\mt F) = 1+\sigma_{min}^2+\sigma_{max}^2.
\end{equation}
On the other hand,
$$
\nabla \vc y^T\nabla \vc y = \mt F^T\mt F = \mt 1 +\vc  a\otimes \vc n + \vc n\otimes \vc a +|\vc a|^2\vc n\otimes \vc n,
$$
which yields
\begin{equation}
\label{stima 1 norma}
\tr(\mt F^T\mt F) = 3 +2\vc a\cdot \vc n +|\vc a|^2 = 1+2\mathfrak{D}+|\vc a|^2.
\end{equation}
Therefore, by putting together~\eqref{stima 1 norma} and \eqref{stima 2 norma} we deduce
\[
0\leq |\vc a|^2 = (\sigma_{max}-\sigma_{min})^2\leq(\lambda_{max}-\lambda_{min})^2.
\]
Here we also made use of the following relation between eigenvalues proved in~\cite{BallCarstensen2}:
$$
\lambda_{min}\leq \sigma_{min}\leq1\leq \sigma_{max}\leq\lambda_{max}.
$$
\end{proof}

Another interesting property regards the divergence of $\vc n\otimes \vc a$: 
\begin{proposition}
\label{divnulla}
Let $\vc z\in W^{1,\infty}(\Omega;\mathbb{R}^3)$ be such that 
$$
\nabla \vc z(\vc x) = \vc a(\vc x)\otimes \vc n(\vc x),\qquad \vc a\cdot \vc n=\mathfrak{D}-1\in \R,\qquad\text{a.e. in }\Omega.
$$
Then, $\nabla\cdot(\vc n\otimes \vc a)=\vc 0$ in the sense of distributions. 
Furthermore, if $\vc n\in W^{1,1}(\Omega;\mathbb{R}^3)$ with $|\vc n|=1$ a.e. in $\Omega$, then $\nabla\cdot \vc a=0$ in the sense of distributions.
\end{proposition}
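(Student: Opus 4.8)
The plan is to reduce both claims to one algebraic observation. Writing $\nabla\vc z=\vc a\otimes\vc n$ with the Jacobian convention $(\nabla\vc z)_{ij}=\partial_j z_i$, we have $\vc n\otimes\vc a=(\nabla\vc z)^T$, and $\dv\vc z=\tr(\nabla\vc z)=\vc a\cdot\vc n=\mathfrak D-1$ is constant; formally then $\dv(\vc n\otimes\vc a)=\dv\big((\nabla\vc z)^T\big)=\nabla(\dv\vc z)=\vc 0$. The whole point is to justify these manipulations at the regularity available, and then to exploit $|\vc n|\equiv1$ to pass from $\vc n\otimes\vc a$ to $\vc a$.

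For the first statement I would test against $\boldsymbol\varphi\in C_c^\infty(\Omega;\R^3)$, so that
\[
\langle\dv(\vc n\otimes\vc a),\boldsymbol\varphi\rangle=-\mint\sum_{i,j}(\partial_i z_j)(\partial_j\varphi_i)\,\mathrm d\vc x .
\]
Since $z_j\in W^{1,\infty}(\Omega)\subset W^{1,1}_{loc}(\Omega)$ and each $\partial_j\varphi_i\in C_c^\infty(\Omega)$, an integration by parts in $x_i$ gives $\mint\sum_{i,j} z_j\,\partial_i\partial_j\varphi_i\,\mathrm d\vc x=\mint\sum_j z_j\,\partial_j(\dv\boldsymbol\varphi)\,\mathrm d\vc x$, and a further integration by parts in $x_j$ yields $-\mint(\dv\vc z)(\dv\boldsymbol\varphi)\,\mathrm d\vc x=-(\mathfrak D-1)\mint\dv\boldsymbol\varphi\,\mathrm d\vc x=0$, the last integral vanishing because $\boldsymbol\varphi$ is compactly supported. (Equivalently one may mollify: $\dv\big((\nabla\vc z_\varepsilon)^T\big)=\nabla(\dv\vc z_\varepsilon)=\vc 0$ for $\vc z_\varepsilon=\vc z\ast\rho_\varepsilon$, and $(\nabla\vc z_\varepsilon)^T\to(\nabla\vc z)^T$ in $L^1_{loc}$, so $\dv(\vc n\otimes\vc a)=0$ in $\mathcal D'(\Omega)$.)

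For the second statement, $|\vc n|=1$ a.e.\ gives the pointwise identity $\vc a=(\nabla\vc z)\vc n$, because $\sum_j(\partial_j z_i)n_j=a_i|\vc n|^2=a_i$; in particular $\vc a\in L^\infty(\Omega;\R^3)$. I would then set $\vc z_\varepsilon:=\vc z\ast\rho_\varepsilon$, which is smooth on $\Omega_\varepsilon:=\{\vc x\in\Omega:\dist(\vc x,\partial\Omega)>\varepsilon\}$, satisfies $\dv\vc z_\varepsilon=(\dv\vc z)\ast\rho_\varepsilon\equiv\mathfrak D-1$ there, has $\|\nabla\vc z_\varepsilon\|_{L^\infty}\le\|\nabla\vc z\|_{L^\infty}$, and obeys $\nabla\vc z_\varepsilon\to\vc a\otimes\vc n$ a.e.\ and in every $L^p_{loc}$, $p<\infty$. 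Since $\vc n\in W^{1,1}$, the field $\vc a_\varepsilon:=(\nabla\vc z_\varepsilon)\vc n$ lies in $W^{1,1}_{loc}(\Omega_\varepsilon)$, and the product rule together with $\dv\vc z_\varepsilon\equiv\mathfrak D-1$ gives
\[
\dv\vc a_\varepsilon=\vc n\cdot\nabla(\dv\vc z_\varepsilon)+\tr\!\big((\nabla\vc z_\varepsilon)(\nabla\vc n)\big)=\tr\!\big((\nabla\vc z_\varepsilon)(\nabla\vc n)\big).
\]
Because $|\nabla\vc z_\varepsilon|\le\|\nabla\vc z\|_{L^\infty}$ and $|\nabla\vc n|\in L^1_{loc}$, dominated convergence yields $\vc a_\varepsilon\to\vc a$ and $\dv\vc a_\varepsilon\to\tr\!\big((\vc a\otimes\vc n)(\nabla\vc n)\big)$ in $L^1_{loc}$, hence $\dv\vc a=\tr\!\big((\vc a\otimes\vc n)(\nabla\vc n)\big)$ in $\mathcal D'(\Omega)$. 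Finally $\tr\!\big((\vc a\otimes\vc n)(\nabla\vc n)\big)=\sum_i a_i\sum_j n_j\,\partial_i n_j=\tfrac12\,\vc a\cdot\nabla(|\vc n|^2)=\vc 0$ a.e., since $|\vc n|^2\equiv1$ and $\vc n\in W^{1,1}$ force $\sum_j n_j\partial_i n_j=0$ a.e.\ (the tangency identity for Sobolev maps into the sphere). Thus $\dv\vc a=0$.

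The delicate step is the limit passage in the formula for $\dv\vc a_\varepsilon$: since $\vc n$ is merely $W^{1,1}$, $\nabla\vc n$ need not lie in any $L^q$ with $q>1$, so Hölder's inequality is unavailable; one must instead combine the a.e.\ convergence $\nabla\vc z_\varepsilon\to\vc a\otimes\vc n$ with the uniform bound $\|\nabla\vc z_\varepsilon\|_{L^\infty}\le\|\nabla\vc z\|_{L^\infty}$ and apply dominated convergence with dominating function a constant multiple of $|\nabla\vc n|\in L^1_{loc}$. Everything else is a routine use of the product and chain rules for Sobolev functions.
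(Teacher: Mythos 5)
Your proof is correct. For the first statement you do essentially what the paper does: both arguments amount to the distributional identity $\nabla\cdot\bigl((\nabla\vc z)^T\bigr)=\nabla(\dv\vc z)=\vc 0$, which you verify by two integrations by parts against a test field (or by mollification), while the paper simply states it as a chain of distributional identities.

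For the second statement your route is genuinely different in technique. The paper works entirely in the weak formulation and \emph{reuses} the first statement: it writes $\mint\vc a\cdot\nabla\ff=\mint(\vc n\otimes\vc a):\nabla(\vc n\ff)-\mint\ff\,(\vc n\otimes\vc a):\nabla\vc n$, observes that the first integral vanishes because it is the pairing of $\nabla\cdot(\vc n\otimes\vc a)=\vc 0$ with the compactly supported $W^{1,1}\cap L^\infty$ field $\vc n\ff$ (a density argument the paper leaves implicit), and that the second equals $-\tfrac12\mint\ff\,\vc a\cdot\nabla|\vc n|^2=0$. You instead mollify $\vc z$, use $\vc a_\eps=(\nabla\vc z_\eps)\vc n$, exploit $\dv\vc z_\eps\equiv\mathfrak D-1$ to kill the first term of the product rule, and pass to the limit by dominated convergence with dominating function $C|\nabla\vc n|\in L^1_{loc}$. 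The kernel of both arguments is the same identity $\tfrac12\,\vc a\cdot\nabla|\vc n|^2=\sum_{i,j}a_i n_j\partial_i n_j=0$, valid a.e. because $|\vc n|^2\equiv1$ and $\vc n\in W^{1,1}\cap L^\infty$. Your mollification route has the small advantage of making the regularity bookkeeping fully explicit (in particular, it avoids having to extend the distributional test class from $C^\infty_c$ to compactly supported $W^{1,1}$ fields), at the cost of being a bit longer; the paper's version is shorter but relies on that routine, unspoken density step and on having already established the first statement.
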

\begin{proof}
On the one hand, we have
$$
\nabla(\nabla\cdot \vc z) = \nabla(\mathfrak{D}-1)=\vc0.
$$
On the other hand
$$
\nabla(\nabla\cdot\vc  z) = \nabla\cdot (\nabla \vc z)^T = \nabla\cdot (\vc n\otimes\vc  a).
$$
Here, both identities should be understood in the sense of distributions. By putting them together we hence get the first statement. As a consequence, if $\vc n\in W^{1,1}(\Omega;\mathbb{R}^3)$ such that $|\vc n|=1$ a.e. in $\Omega$, we can write,
\[
\begin{split}
\mint \vc a\cdot\nabla\ff\,\mathrm d\vc x &=  \mint \vc n^T \vc n\otimes \vc a \cdot\nabla\ff\,\mathrm d \vc x 
\\
& =\mint \vc n\otimes \vc a :\nabla(\vc n\ff)\,\mathrm d \vc x - \mint \ff \vc n\otimes \vc a :\nabla \vc n\,\mathrm d \vc x \\
&=- \frac12\mint \ff \vc a \cdot \nabla |\vc n|^2\,\mathrm d \vc x = 0,
\end{split}
\]
for every $\ff\in C^\infty_c(\Omega)$. 
\end{proof}

In general, it is not true that $\nabla\cdot \vc a =0$. Indeed, let us fix $\vc e\in\R^3$ with $|\vc e|=1$, and consider $\vc z$ to be of the form $\vc z(\vc x)=\vc e(\vc x\cdot \vc e)$. Let us also fix $c\in\R$ such that $\vc x\cdot \vc e = c$ for some $\vc x\in\Omega$ and define
$$
\vc a = \vc n=
\begin{cases}
\vc e,\qquad &\vc x\cdot \vc e < c,\\
-\vc e,\qquad &\vc x\cdot \vc e > c;
\end{cases}
$$
In this case, clearly $\vc a\cdot \vc n = 1$ a.e. in $\Omega$, and $\nabla\cdot (\vc n\otimes \vc a)=0$ in the distributional sense. However, $\vc a\in BV(\Omega;\R^3)$ satisfies
$$
\nabla \vc a = -2 \vc e\otimes \vc e \mathscr H^2 \mres 
\{\vc x\colon \vc x\cdot \vc e = c\},
$$
and, as $|\vc e|=1$ by hypothesis, $\nabla \cdot \vc a \neq 0$ in the distributional sense.

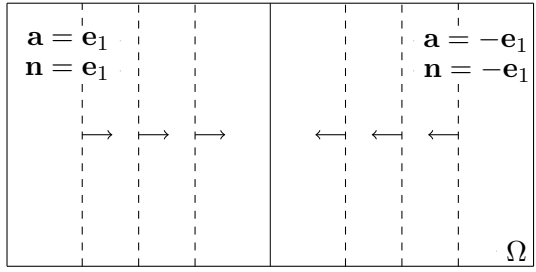
\begin{figure}
\centering
\begin{tikzpicture}
\draw[thin] (-3.5,3.5) -- (3.5,3.5);
\draw[thin] (3.5,3.5) -- (3.5,0);
\draw[thin] (3.5,0) -- (-3.5,0);
\draw[thin] (-3.5,0) -- (-3.5,3.5);
\draw[ultra thin] (0,0) -- (0,3.5);

\draw[dashed] (-2.5,0) -- (-2.5,2.5);
\draw[dashed] (-2.5,3.2) -- (-2.5,3.5);

\draw[dashed] (-1.75,0) -- (-1.75,3.5);
\draw[dashed] (-1,0) -- (-1,3.5);
\draw[dashed] (2.5,0) -- (2.5,2.5);
\draw[dashed] (2.5,3.2) -- (2.5,3.5);

\draw[dashed] (1.75,0) -- (1.75,3.5);
\draw[dashed] (1,0) -- (1,3.5);
\draw [->] (-2.5,1.75) -- (-2.1,1.75); 
\draw [->] (-1.75,1.75) -- (-1.35,1.75); 
\draw [->] (-1,1.75) -- (-0.6,1.75); 
\draw [->] (2.5,1.75) -- (2.1,1.75); 
\draw [->] (1.75,1.75) -- (1.35,1.75); 
\draw [->] (1,1.75) -- (0.6,1.75); 

\filldraw [red] (3.,0.2) circle (0pt) node[anchor=west,black] {$\Omega$};

\filldraw [red] (-2,3) circle (0pt) node[anchor=east,black] {$\vc a = \vc e_1$};
\filldraw [red] (-2,2.6) circle (0pt) node[anchor=east,black] {$\vc n = \vc e_1$};
\filldraw [red] (1.9,3) circle (0pt) node[anchor=west,black] {$\vc a = -\vc e_1$};
\filldraw [red] (1.9,2.6) circle (0pt) node[anchor=west,black] {$\vc n = -\vc e_1$};
\end{tikzpicture}
\caption{
\label{counterex}
Picture of two parallel interfaces moving in opposite directions and meeting at a certain interface where $\vc a$, $\vc n$ are discontinuous. In this case, $\nabla\cdot(\vc n\otimes \vc a)=\vc 0$, but $\nabla\cdot \vc a \neq 0$. 
}
\end{figure}

Keeping in mind the counterexample above, we extend the validity of the identity $\nabla\cdot \vc a=0$ in the following Corollary \ref{cor diva}. In this result we use the space of special functions with bounded variation on $\Omega$, namely $SBV(\Omega)$. If $\ff\in SBV(\Omega)$, its gradient is the sum of two Radon measures, one absolutely continuous with respect to the Lebesgue measure $\mathscr L^3$, and the other concentrated on a $2$-rectifiable set $S_\ff$, usually called the jump set. We denote by $\ff^-,\ff^+$ the trace of $\ff$ on the two sides of the jump set $S_\ff$ respectively. We refer the interested reader to \cite{AFP} and \cite{EG} for more details on this space.  
\begin{corollary} 
\label{cor diva}
Let $\vc z\in W^{1,\infty}(\Omega;\mathbb{R}^3)$ be such that 
\beq
\label{hp diva0 star}
\nabla \vc z(\vc x) = \vc a(\vc x)\otimes \vc n(\vc x),\qquad \vc a\cdot \vc n=\mathfrak{D}-1\in \R,\qquad\text{a.e. in }\Omega.
\eeq
Let $\vc a,\vc n\in SBV(\Omega;\mathbb{R}^3)\cap L^\infty(\Omega;\mathbb{R}^3)$, $|\vc n|=1$ a.e. in $\Omega$ and let 
\begin{align}
\label{hyp cor}
\vc n^-(\vc x)\neq - \vc n^+(\vc x),\qquad &\text{if $\mathfrak{D}\neq 1$},\\
\label{hyp cor 2}
\vc a^-(\vc x)= - \vc a^+(\vc x) \Rightarrow \vc n^-(\vc x)\neq - \vc n^+(\vc x),&\qquad \text{if $\mathfrak{D} = 1$},
\end{align}
{for $\mathscr H^2-$a.e. $\vc x\in S_{\vc n}\cap  S_{\vc a}$.}
Then, $\nabla\cdot \vc a=0$ in the sense of distributions.
\end{corollary}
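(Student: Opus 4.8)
The plan is to turn the distributional identity $\nabla\cdot(\vc n\otimes\vc a)=\vc 0$ of Proposition~\ref{divnulla} into pointwise information on the two parts of the Radon measure $D\vc a$, and to show that both the Lebesgue-absolutely-continuous density and the jump part of $\nabla\cdot\vc a$ vanish. Since $\vc a,\vc n\in SBV(\Omega;\R^3)\cap L^\infty$, the product $\vc n\otimes\vc a$ is in $SBV(\Omega;\R^{3\times3})\cap L^\infty$, so $\nabla\cdot(\vc n\otimes\vc a)$ is a finite Radon measure; being null as a distribution it is null as a measure, so both its density and its jump part vanish. Taking Lebesgue densities componentwise in $\nabla\cdot(\vc n\otimes\vc a)=\vc 0$ gives, $\mathscr L^3$-a.e., $\nabla^{\mathrm{ac}}n_i\cdot\vc a+n_i\,(\nabla\cdot\vc a)^{\mathrm{ac}}=0$ for $i=1,2,3$; multiplying by $n_i$, summing over $i$, and using $\sum_i n_i\nabla^{\mathrm{ac}}n_i=\tfrac12\nabla^{\mathrm{ac}}(|\vc n|^2)=\vc 0$ a.e.\ (as $|\vc n|\equiv1$) together with $|\vc n|^2=1$, we get $(\nabla\cdot\vc a)^{\mathrm{ac}}=0$ a.e.\ --- this is exactly the smooth computation of Proposition~\ref{divnulla}, here legitimate because only absolutely continuous parts appear.

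The jump part is the crux. Orient $S:=S_{\vc n}\cup S_{\vc a}$ by a fixed approximate normal $\boldsymbol\nu$, so that the one-sided traces $\vc a^\pm,\vc n^\pm$ are labelled consistently; recall also that $\nabla\cdot\vc a$ has jump part $\bigl[(\vc a^+-\vc a^-)\cdot\boldsymbol\nu\bigr]\,\mathscr H^2\mres S_{\vc a}$. Taking jump parts in $\nabla\cdot(\vc n\otimes\vc a)=\vc 0$ yields, for $\mathscr H^2$-a.e.\ $\vc x\in S$,
\[
\bigl(n_i^+\vc a^+-n_i^-\vc a^-\bigr)\cdot\boldsymbol\nu=0\quad(i=1,2,3),\qquad\text{i.e.}\qquad \alpha^+\vc n^+=\alpha^-\vc n^-,\quad \alpha^\pm:=\vc a^\pm\cdot\boldsymbol\nu.
\]
If $\vc x\notin S_{\vc n}$ then $\vc n^+=\vc n^-\neq\vc0$ forces $\alpha^+=\alpha^-$; if $\alpha^+=0$ then $\alpha^-\vc n^-=\vc0$ forces $\alpha^-=0$; in both cases $(\vc a^+-\vc a^-)\cdot\boldsymbol\nu=0$. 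Otherwise $\vc x\in S_{\vc n}\cap S_{\vc a}$, $\alpha^+\neq0$, and $|\vc n^\pm|=1$ gives $|\alpha^-|=|\alpha^+|$; the choice $\alpha^-=\alpha^+$ would force $\vc n^+=\vc n^-$, impossible on $S_{\vc n}$. Hence the jump part of $\nabla\cdot\vc a$ can be supported only on
\[
B:=\bigl\{\vc x\in S_{\vc n}\cap S_{\vc a}\ :\ \vc n^+=-\vc n^-,\ \vc a^+\cdot\boldsymbol\nu\neq0\bigr\}.
\]

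It remains to prove $\mathscr H^2(B)=0$ using $\vc a^\pm\cdot\vc n^\pm=\mathfrak D-1$ and hypotheses \eqref{hyp cor}--\eqref{hyp cor 2}. On $B$, from $\vc n^-=-\vc n^+$ and $\vc a^-\cdot\vc n^-=\mathfrak D-1$ we get $\vc a^-\cdot\vc n^+=-(\mathfrak D-1)$. If $\mathfrak D\neq1$, then $\vc n^+=-\vc n^-$ at a point of $S_{\vc n}\cap S_{\vc a}$ directly contradicts \eqref{hyp cor}, so $B$ lies in an $\mathscr H^2$-null set. If $\mathfrak D=1$, then $\vc a^+\cdot\vc n^+=\vc a^-\cdot\vc n^+=0$, so $\vc a^\pm\perp\vc n^+$, and I bring in the Hadamard jump condition for $BV$ strains (Lemma~\ref{Had 1}; equivalently, $\nabla\vc z=\vc a\otimes\vc n$ is the distributional gradient of the continuous map $\vc z$, so its jump across $S_{\nabla\vc z}$ is rank-one along the normal, $\vc w\otimes\boldsymbol\nu$). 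Were $\vc x\in B\cap S_{\nabla\vc z}$, then $\vc w\neq\vc0$ and $(\vc a^++\vc a^-)\otimes\vc n^+=\vc a^+\otimes\vc n^+-\vc a^-\otimes\vc n^-=\vc w\otimes\boldsymbol\nu$ would force $\vc n^+\parallel\boldsymbol\nu$, whence $\vc a^+\cdot\boldsymbol\nu=\pm\,\vc a^+\cdot\vc n^+=0$, contradicting $\alpha^+\neq0$. Hence $\vc x\notin S_{\nabla\vc z}$, so $\vc a^+\otimes\vc n^+=\vc a^-\otimes\vc n^-=-\vc a^-\otimes\vc n^+$, and since $\vc n^+\neq\vc0$ this gives $\vc a^+=-\vc a^-$. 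Thus $B\subseteq\{\vc x\in S_{\vc n}\cap S_{\vc a}:\vc a^+=-\vc a^-\text{ and }\vc n^+=-\vc n^-\}$, which \eqref{hyp cor 2} declares $\mathscr H^2$-null. In either case $\mathscr H^2(B)=0$, the jump part of $\nabla\cdot\vc a$ vanishes, and combined with the first step $\nabla\cdot\vc a=0$ in the sense of distributions.

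The main obstacle is the case $\mathfrak D=1$: the relation $\alpha^+\vc n^+=\alpha^-\vc n^-$ coming from $\nabla\cdot(\vc n\otimes\vc a)=\vc0$ does not by itself exclude a \emph{sign-flip jump} of $\vc n$ with $\vc a^+\cdot\boldsymbol\nu\neq0$, and one genuinely needs the rank-one structure of the jump of the gradient $\nabla\vc z$ to upgrade this to $\vc a^+=-\vc a^-$ before hypothesis \eqref{hyp cor 2} can be applied. A secondary point requiring care is the orientation bookkeeping --- fixing one approximate normal $\boldsymbol\nu$ on $S_{\vc n}\cup S_{\vc a}$ so that the traces of $\vc a,\vc n$, the normal of $S_{\vc a}$, and the normal of $S_{\nabla\vc z}$ are all compatible $\mathscr H^2$-a.e.\ --- together with the (standard) fact that products of $SBV\cap L^\infty$ functions stay in $SBV\cap L^\infty$ with the expected Leibniz rule, which is what makes the decomposition of $\nabla\cdot(\vc n\otimes\vc a)$ into density and jump parts meaningful.
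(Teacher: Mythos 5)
Your proof is correct, and it uses the same two ingredients as the paper (the divergence identity $\nabla\cdot(\vc n\otimes\vc a)=\vc 0$ from Proposition~\ref{divnulla}, and the $BV$-Hadamard jump condition of Lemma~\ref{Had 1}), but it weights them differently in a way that is worth noting. For the absolutely continuous part you reproduce the paper's computation almost verbatim. For the jump part, however, the paper starts from Lemma~\ref{Had 1}, i.e.\ from the full rank-one relation $\vc a^+\otimes\vc n^+-\vc a^-\otimes\vc n^-=\vc b\otimes\vc m$, then takes the trace to get $\vc b\cdot\vc m=0$ and runs a case analysis on $\vc n^+=\vc n^-$ versus $\vc a^+\parallel\vc a^-$. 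You instead extract directly from $\nabla\cdot(\vc n\otimes\vc a)=\vc 0$ the scalar jump relation $\alpha^+\vc n^+=\alpha^-\vc n^-$ with $\alpha^\pm=\vc a^\pm\cdot\boldsymbol\nu$, and combined with $|\vc n|=1$ this alone pins the jump of $\nabla\cdot\vc a$ onto the exceptional set $B$; the Hadamard rank-one structure of $\nabla\vc z$ is then invoked only in the residual case $\mathfrak D=1$, to upgrade ``$\vc n^+=-\vc n^-$, $\alpha^+\neq0$'' to ``$\vc a^+=-\vc a^-$'' before applying~\eqref{hyp cor 2}. This buys a slightly cleaner and more localised case analysis: for $\mathfrak D\neq 1$ you need only the weaker constraint $\nabla\cdot(\vc n\otimes\vc a)=\vc 0$ (which comes from $\nabla\cdot\vc z$ being constant) and never the fact that $\nabla\vc z$ is curl-free, whereas the paper uses the curl-free structure throughout. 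The two proofs are otherwise equivalent, and both depend on~\eqref{hyp cor}--\eqref{hyp cor 2} at exactly the same point; your remark that the $\mathfrak D=1$ branch genuinely needs the rank-one jump of $\nabla\vc z$ to conclude, and not merely the divergence identity, is an accurate reading of where the hypotheses bite.
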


The following lemma is needed for the proof of Corollary \ref{cor diva}:
\begin{lemma}
\label{Had 1}
Let $\boldsymbol{\ff}\in H^1(\Omega,\R^3)$, with $\nabla\boldsymbol{\ff}\in BV(\Omega;\R^{3\times3})$. Then, there exists $\vc b(\vc x)\in L^1(S_{\nabla\boldsymbol\ff};\R^3)$ such that
$$
(\nabla\boldsymbol{\ff})^+(\vc x)-(\nabla\boldsymbol{\ff})^-(\vc x)=\vc b(\vc x)\otimes \vc m(\vc x),\qquad \mathscr H^2\text{-a.e. }\vc x\in S_{\nabla\boldsymbol\ff},
$$
with $\vc m(\vc x)$ being the normal to $S_{\nabla\boldsymbol\ff}$ in $\vc x$.
\end{lemma}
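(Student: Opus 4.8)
\textbf{Proof plan for Lemma \ref{Had 1}.} The underlying principle is that $\mt G := \nabla\boldsymbol\ff$ is curl-free, and a curl-free $BV$ field must jump along a rank-one direction. Concretely, the components $G_{ij} = \partial_j\ff_i$ lie in $BV(\Omega)$, and commutation of distributional derivatives gives $\partial_j G_{ik} = \partial_j\partial_k\ff_i = \partial_k\partial_j\ff_i = \partial_k G_{ij}$ as distributions; since $G_{ik},G_{ij}\in BV(\Omega)$, both sides are finite Radon measures, so $D_j G_{ik} = D_k G_{ij}$ holds as an equality of measures on $\Omega$, for every $i$ and every pair $j,k$. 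I would then set $S_{\nabla\boldsymbol\ff} := \bigcup_{i,j} S_{G_{ij}}$, a $2$-rectifiable set, fix an orientation of its approximate normal $\vc m$ (defined $\mathscr H^2$-a.e.), and recall from the structure theory of $BV$ functions (see \cite{AFP}) that at $\mathscr H^2$-a.e. point of $S_{\nabla\boldsymbol\ff}$ each component $G_{ij}$ either is approximately continuous or has an approximate jump with normal $\pm\vc m$; labelling the one-sided traces $G_{ij}^{\pm}$ by the side into which $+\vc m$ points (and extending $G_{ij}^+-G_{ij}^-$ by zero where $G_{ij}$ does not jump), the jump part of $DG_{ij}$ is $(G_{ij}^+-G_{ij}^-)\,\vc m\;\mathscr H^2\mres S_{\nabla\boldsymbol\ff}$.

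Next I would invoke uniqueness of the decomposition of a $BV$ derivative into its absolutely continuous, Cantor and jump parts: this decomposition is additive and canonical, so the identity $D_j G_{ik} = D_k G_{ij}$ forces equality of the corresponding jump parts. Since all these jump parts are supported on the single rectifiable set $S_{\nabla\boldsymbol\ff}$ with common normal $\vc m$, this says, $\mathscr H^2$-a.e. on $S_{\nabla\boldsymbol\ff}$,
\[
(G_{ik}^+-G_{ik}^-)\,m_j = (G_{ij}^+-G_{ij}^-)\,m_k,\qquad i,j,k\in\{1,2,3\}.
\]
One must be slightly careful that the jump part is genuinely isolated from the Cantor part here; this is standard, because the jump part is the portion of the singular derivative concentrated on an $\mathscr H^2$-$\sigma$-finite set, on which the Cantor part vanishes, so equal derivative measures, once restricted to $S_{\nabla\boldsymbol\ff}$, have equal jump parts.

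Finally, the algebra: at a point of $S_{\nabla\boldsymbol\ff}$ where the displayed relation holds, put $b_i := \sum_k (G_{ik}^+-G_{ik}^-)\,m_k$; multiplying the identity by $m_k$, summing over $k$, and using $|\vc m|=1$ yields $G_{ij}^+-G_{ij}^- = b_i m_j$, i.e. $(\nabla\boldsymbol\ff)^+-(\nabla\boldsymbol\ff)^- = \vc b\otimes\vc m$ with $\vc b=(b_1,b_2,b_3)$ and $\vc m$ the normal to $S_{\nabla\boldsymbol\ff}$. Since $\mt G\in BV(\Omega;\R^{3\times3})$ has finite total variation, $\int_{S_{\nabla\boldsymbol\ff}}|(\nabla\boldsymbol\ff)^+-(\nabla\boldsymbol\ff)^-|\,\mathrm d\mathscr H^2<\infty$, and because $|\vc b\otimes\vc m|=|\vc b|\,|\vc m|=|\vc b|$ this gives $\vc b\in L^1(S_{\nabla\boldsymbol\ff};\R^3)$, completing the argument. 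I expect the only genuinely delicate point to be the bookkeeping of the individual component jump sets, their normals and the chosen one-sided traces relative to the common set $S_{\nabla\boldsymbol\ff}$; once that is set up, the curl-free identity on jump parts and the short linear-algebra step are routine. A variant, if one prefers to avoid this bookkeeping, is to blow up at a jump point, where $\mt G$ is asymptotically the two-valued field $\mt G^-\chi_{\{\vc y\cdot\vc m<0\}}+\mt G^+\chi_{\{\vc y\cdot\vc m>0\}}$, and apply Proposition \ref{rank one connections} to a primitive of the blow-up limit; I would still favour the direct measure-theoretic route above, as it sidesteps the compactness and lower-semicontinuity technicalities of the blow-up.
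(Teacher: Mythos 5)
Your proof is correct, but it takes a genuinely different route from the paper's. The paper's proof exploits $\nabla\times\nabla\boldsymbol\ff=\vc 0$ through the $H(\mathrm{curl})$ integration-by-parts identity of Girault--Raviart: it first establishes the distributional (i.e.\ $H^{-1/2}$) continuity of the tangential trace $\nabla\ff_i\times\vc m$ across Lipschitz $2$-graphs (equation \eqref{Had weak}), and then upgrades this to the pointwise statement by covering $S_{\nabla\boldsymbol\ff}$ with countably many such graphs (Proposition \ref{SIPUO}) and invoking the existence of one-sided $BV$ traces. Your argument stays entirely inside $BV$ structure theory: you encode the curl-free constraint as the equality of Radon measures $D_jG_{ik}=D_kG_{ij}$, restrict both sides to the $2$-rectifiable set $S_{\nabla\boldsymbol\ff}$ (where the absolutely continuous and Cantor parts vanish, so only the jump parts survive), obtain the component identity $(G_{ik}^+-G_{ik}^-)m_j=(G_{ij}^+-G_{ij}^-)m_k$, and conclude by a short linear-algebra step; the $L^1$ integrability of $\vc b$ then follows from the finite total variation of $\nabla\boldsymbol\ff$. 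Your route is more elementary and self-contained, avoiding the $H^{-1/2}$ trace machinery entirely; the paper's route, on the other hand, produces along the way the weaker distributional Hadamard jump condition \eqref{Had weak}, which is of independent interest and is quoted later in Remark \ref{HJC_weak}. The one delicate point you flag---that the approximate jump normals of the individual component jump sets $S_{G_{ij}}$ agree, up to sign and $\mathscr H^2$-a.e., with a fixed orientation of the normal to $S_{\nabla\boldsymbol\ff}$---is settled by the locality of approximate tangent planes to rectifiable sets, and you should cite that explicitly to make the bookkeeping airtight, but the overall argument is sound.
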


\begin{proof}
The proof of this type of result is usually done via a blow up argument and exploits continuity. This may be possible here, but we use a slightly different proof. Let $\boldsymbol\ff \in H^1(\Omega;\R^3)$ and let $A$ be a Lipschitz open subset of $\Omega$. Since $\nabla\times\nabla\boldsymbol\ff=0$ in the sense of distribution, we have that the following integration by parts formula holds (see \cite[Ch. 2, (2.18)]{GR})
\beq
\label{integr by part}
\int_A \nabla\ff_i\cdot\nabla\times \boldsymbol\psi\,\mathrm d \vc x = \int_{\partial A} (\nabla\ff_i\times \vc m)\cdot\boldsymbol\psi\,\mathrm d \mathscr H^2,\qquad \forall \boldsymbol\psi\in H^1(\Omega;\R^3),
\eeq
with $\vc m$ being the outpointing normal to $A$. We remark that, as stated in \cite[Ch. 2, Thm 2.5]{GR}, $(\nabla\ff_i\times \vc m)$ is a well defined object in $H^{-\frac12}(\partial A)$, and the integral on the right hand side should be interpreted as $\langle \nabla\ff_i\times \vc m,\boldsymbol \psi\rangle _{H^{-\frac12},H^{\frac12}}$. 
Let now $S$ be a Lipschitz $2$-graph contained in $\Omega$ with normal $\vc m$ and let $U_i$ be a countable set of open neighbourhoods such that $U_i\subset \Omega$, $U_i\setminus S$ has exactly two connected components, namely $U^+_i$ and $U^-_i$, and $$ \mathscr H^2(S\setminus \bigcup U_i)=0.$$
We now define $(\nabla\ff_i\times \vc m)^\pm$ to be the objects of $H^{-\frac12}(S)$ satisfying \eqref{integr by part} respectively for $A=U_i^\pm$. 
From \eqref{integr by part} we deduce
\[
\begin{split}
0=\int_{U_i} \nabla\ff_i\cdot\nabla\times \boldsymbol\psi \,\mathrm d \vc x = \int_{U_i^+} \nabla\ff_i\cdot\nabla\times \boldsymbol\psi\,\mathrm d \vc x+\int_{U_i^-} \nabla\ff_i\cdot\nabla\times \boldsymbol\psi\,\mathrm d \vc x\\= \int_{S\cap U_i} \bigl((\nabla\ff_i\times \vc m)^+ - (\nabla\ff_i\times \vc m)^-\bigr)\cdot\boldsymbol\psi\,\mathrm d \mathscr H^2,
\end{split}
\]
for every $\boldsymbol\psi\in C^\infty_c(U_i;\R^3)$. By repeating this argument on all $U_i$, this implies
\beq
\label{Had weak}
\|(\nabla\ff_i\times \vc m)^+ - (\nabla\ff_i\times \vc m)^-\|_{H^{-\frac12}(S)}=0,
\qquad i=1,2,3,
\eeq
which is a weak Hadamard jump condition for $H^1(\Omega;\R^3)$ on Lipschitz $2$-graphs. Furthermore, if $\nabla\boldsymbol\ff \in SBV(\Omega)$, we know that the set where it is discontinuous, namely $S_{\nabla\boldsymbol\ff}$ is $2$-rectifiable (see e.g., \cite{AFP}). Therefore, by Proposition \eqref{SIPUO} we can cover $S_{\nabla\boldsymbol\ff}$ with countably many Lipschitz graphs where \eqref{Had weak} holds. On the other hand, from \cite[Thm 3.77]{AFP} we know that the trace of $\nabla\ff_i$ is well defined for almost every point of $S_{\nabla\boldsymbol\ff}$. Collecting these two facts we thus deduce the desired result.

\end{proof}
\begin{remark}
\label{HJC_weak}
\normalfont
Equation \eqref{Had weak} is a very weak version of the Hadamard jump condition on Lipschitz surfaces $\Gamma$ with normal $ \vc m$ for functions $ \vc y\in H^1(\Omega;\R^3)$. Indeed, we can only make sense to the tangential trace $\nabla  \vc y\times \vc  m$ of $ \vc y$ on $\Gamma$ as an object of $H^{-\frac12}(\Gamma)$, and \eqref{Had weak} states that, across $\Gamma$, $\nabla  \vc y\times  \vc m$ must not jump as an object of $H^{-\frac12}(\Gamma)$, which is kind of an average sense.
\end{remark}

\begin{proof}
{It follows from the definition of jump points of a $BV$ function (see e.g., \cite{AFP,EG}) that \eqref{hp diva0 star} and $|\vc n|=1$ hold $\mathscr H^2$--almost everywhere on $S_\vc a\cup S_\vc n$. Therefore, under our hypotheses \eqref{hyp cor}--\eqref{hyp cor 2} $\vc a\otimes \vc n \in SBV(\Omega)\cap L^\infty(\Omega)$ and $S_{\vc a\otimes \vc n}= S_\vc a\cup S_\vc n$ up to an $\mathscr H^2$--negligible set. Here, $\vc a,\vc n$ are chosen to be the precise representatives for $\vc a,\vc n.$ Furthermore, Lemma \ref{Had 1} implies that a Hadamard jump condition must hold across $S_{\vc a\otimes \vc n}$, so that
\beq
\label{R1c}
\vc a^+\otimes \vc n^+ - \vc a^-\otimes \vc n^- = \vc b\otimes \vc m,\qquad \mathscr H^2\text{-a.e. on } S_{\vc a\otimes \vc n},
\eeq
for some $\vc b\in L^\infty(S_{\vc a\otimes \vc n};\R^3)$ and with $\vc m$ being the normal to $S_{\vc a\otimes \vc n}$. In case $\mathfrak{D}\neq 1$, this, together with \eqref{hyp cor} and $|\vc n|=1$, imply that the only possible scenarios are the following on $S_\vc a\cup S_{\vc n}$:
\begin{enumerate}[(a)]
\item\label{J1} $\vc n^+ = \vc n^- = \vc m$
, in which case $\vc b=\vc a^+-\vc a^-$;
\item\label{J2} $\vc a^+ = \xi \vc a^-$
, in which case $\vc m\parallel\xi \vc n^+-\vc n^-$ and $\vc b\parallel \vc a^+\parallel \vc a^-$,
\end{enumerate}
for some $\xi\in L^\infty(S_{\vc n})$. Taking the trace of \eqref{R1c} implies also $\vc b\cdot\vc  m =0$. As a consequence, by \eqref{J1}--\eqref{J2} $(\vc a^+-\vc a^-)\cdot \vc m =0$ $\mathscr H^2$-a.e. on $S_{\vc a}\cup S_{\vc n}$, that is, the divergence of $\vc a$ has no singular part. In case $\mathfrak{D}=1$, \eqref{hyp cor 2} allows also to have $\vc n^+ = -\vc n^- = \vc m$ in $S_\vc a$, when $\vc a^+\neq - \vc a^-$. In which case $(\vc a^+-\vc a^-)\cdot\vc  m =0$ follows just by the fact that $\vc a\cdot \vc n=0$.

 It just remains to check that the part of $\nabla\cdot \vc a$ which is absolutely continuous with respect to $\mathscr L^3$ is null as well. To this aim, we first observe that, by the chain rule for $BV$ functions (see e.g., \cite[Example 3.97]{AFP})
\beq
\label{proddivexp}
\vc 0 = \nabla\cdot(\vc n\otimes \vc a) = \bigl(\vc n(\nabla^a\cdot \vc a)+ \nabla^a \vc n \vc a \bigr)\mathscr L^3 +  \bigl(\vc n^+\otimes\vc a^+ - \vc n^-\otimes\vc a^- \bigr)\vc m \,\mathscr H^2\mres S_{\vc a\otimes\vc n},
\eeq
where we denoted by $\nabla^a$ the absolutely continous part of the gradient. Given \eqref{J1}--\eqref{J2} above, under our hypotheses we have $\bigl(\vc n^+\otimes\vc a^+ - \vc n^-\otimes\vc a^- \bigr)\vc m = \vc 0$ $\mathscr H^2-$a.e. on $S_{\vc a\otimes\vc n}$. Furthermore, as $|\vc n|=1$ a.e., we have
$$
\vc 0 = \nabla|\vc n|^2 = \vc n^T \nabla^a \vc n\, \mathscr L^3 .
$$
Therefore, multiplying \eqref{proddivexp} by $\vc n$ and exploiting the fact that $|\vc n|=1$ a.e., we thus get 
$$
\nabla^a\cdot \vc a = 0,\qquad \text{a.e. in }\Omega.
$$
Therefore, as $\vc a\in SBV(\Omega)$, for every $\phi\in C^1_c(\Omega)$ we have
\begin{align*}
-\mint \nabla\ff\cdot \vc a\, \mathrm d\vc x &= \int_{S_\vc a} \ff (\vc a^+-\vc a^-)\cdot\vc  m \,\mathrm d\mathscr H^2 + \int_{\Omega} \ff \nabla^a\cdot \vc a\,\mathrm d\vc x = 0
\end{align*}
which concludes the proof.}
\end{proof}

\noindent
We now focus on compound twins. Thanks to Proposition \ref{comp dom char} we know that if two martensite variants $\mt U_1,\mt U_2\in\R^{3\times3}_{Sym^+}$ form a compound twin, then there exists $\mu>0$, $\vc v\in\mathbb{S}^2$ such that $\mt U_1\vc v = \mt U_2\vc v = \mu \vc v$. In this case, \cite[Theorem 2.5.1]{dolzman} states:
\begin{theorem}
Let $\mt U_1,\dots,\mt U_n\in \R^{3\times 3}_{Sym^+}$, such that $\det \mt U_i =\mathfrak{D}>0$, and such that $\mt U_i\vc v =\mu\vc v$ for some $\mu>0, \vc v\in \mathbb{S}^2$, for each $i=1,\dots,n$. Let also 
$$
H = \bigcup_{i=1}^n SO(3)\mt U_i.
$$
Then, there exists $l\in\mathbb{N}, \vc w_1,\dots,\vc w_l$ such that
\[
H^{qc}=
\Set{\mt F\in\R^{3\times 3} \,\Bigg|\; \text{\parbox{3.0in}{\centering 
$\det\mt F = \mathfrak{D}$,  $\mt F^T\mt F\vc v=\mu^2\vc v$ and 
$
\lvert\mt F\vc w_i\rvert^2 \leq \max_{j=1,\dots,n} \lvert\mt U_j\vc w_i\rvert^2
$ 
for each $i=1,\dots,l$.
}}}
\]
\end{theorem}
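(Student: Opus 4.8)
\emph{Setup.} Write $\delta:=\mathfrak{D}/\mu$ and, after a rotation, take $\vc v=\vc e_3$; then each well is block-diagonal, $\mt U_i=\mathrm{diag}(\hat{\mt U}_i,\mu)$ with $\hat{\mt U}_i\in\R^{2\times 2}_{Sym^+}$ and $\det\hat{\mt U}_i=\delta$. Put $\hat H:=\bigcup_{i=1}^n SO(2)\hat{\mt U}_i\subset\R^{2\times 2}$. The plan is to prove both inclusions between $H^{qc}$ and the right-hand side, taking the vectors $\vc w_i$ to be the lifts $(\hat{\vc w}_i,0)$ of the vectors furnished by the (known) planar characterisation of the hull of $\hat H$.

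\emph{Necessity.} The inclusion of $H^{qc}$ in the right-hand side holds for every finite family $\{\vc w_i\}$ and is elementary. The determinant is quasiaffine and equals $\mathfrak{D}$ on $H$, so $\det\mt F=\mathfrak{D}$ on $H^{qc}$; in particular every $\mt F\in H^{qc}$ is invertible. For fixed $\vc w$, $\mt F\mapsto|\mt F\vc w|^2$ is convex, hence quasiconvex, and $\max_{\mt R\in SO(3)}|\mt R\mt U_i\vc w|^2=|\mt U_i\vc w|^2$ (so $\max_H|\cdot\,\vc w|^2=\max_i|\mt U_i\vc w|^2$), which yields the last family of inequalities. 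For the eigenvector condition I would use that $\mt F\mapsto|\mt F\vc v|^2$ and $\mt F\mapsto|\cof\mt F\,\vc v|^2$ are polyconvex and equal, on $H$, $\mu^2$ and $\mathfrak{D}^2/\mu^2$ respectively; hence on $H^{qc}$, writing $\mt C:=\mt F^T\mt F$ and using $\cof\mt F=\mathfrak{D}\,\mt F^{-T}$, one gets $\langle\mt C\vc v,\vc v\rangle\le\mu^2$ and $\langle\mt C^{-1}\vc v,\vc v\rangle\le\mu^{-2}$. Cauchy--Schwarz gives $1=|\vc v|^{4}\le\langle\mt C\vc v,\vc v\rangle\,\langle\mt C^{-1}\vc v,\vc v\rangle\le 1$, so equality holds, which forces $\vc v$ to be an eigenvector of $\mt C$ with eigenvalue $\mu^2$, i.e. $\mt F^T\mt F\vc v=\mu^2\vc v$.

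\emph{Sufficiency, via reduction to the plane.} The two conditions $\det\mt F=\mathfrak{D}$ and $\mt F^T\mt F\vc v=\mu^2\vc v$ are equivalent to $\mt F=\mt R\,\mathrm{diag}(\hat{\mt F},\mu)$ for some $\mt R\in SO(3)$ and some $\hat{\mt F}\in\R^{2\times 2}$ with $\det\hat{\mt F}=\delta$: the eigenvector condition says the third column of $\mt F$ is orthogonal to the first two and of length $\mu$, and positivity of the determinant fixes the orientation. Both $H$ and its rank-one convex hull $H^{rc}$ are invariant under left multiplication by $SO(3)$, and a rank-one connection $\hat{\mt a}\otimes\hat{\vc m}$ in $\R^{2\times 2}$ lifts to $(\hat{\mt a},0)\otimes(\hat{\vc m},0)$ in $\R^{3\times 3}$, so the block-diagonal lift of $\hat H^{rc}$ lies in $H^{rc}$. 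Choosing $\vc w_i:=(\hat{\vc w}_i,0)$, one computes $|\mt R\,\mathrm{diag}(\hat{\mt F},\mu)\,\vc w_i|^2=|\hat{\mt F}\hat{\vc w}_i|^2$ and $|\mt U_j\vc w_i|^2=|\hat{\mt U}_j\hat{\vc w}_i|^2$, so the right-hand side equals $\{\mt R\,\mathrm{diag}(\hat{\mt F},\mu):\mt R\in SO(3),\ \det\hat{\mt F}=\delta,\ |\hat{\mt F}\hat{\vc w}_i|^2\le\max_j|\hat{\mt U}_j\hat{\vc w}_i|^2\ \forall i\}$. It therefore suffices to find vectors $\hat{\vc w}_i$ for which $\{\hat{\mt F}:\det\hat{\mt F}=\delta,\ |\hat{\mt F}\hat{\vc w}_i|^2\le\max_j|\hat{\mt U}_j\hat{\vc w}_i|^2\ \forall i\}\subseteq\hat H^{rc}$; together with the necessity step this gives $H^{qc}=H^{rc}=$ right-hand side. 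For the planar problem one has $\hat H^{pc}=\mathrm{conv}(\hat H)\cap\{\det=\delta\}$ (on $\R^{2\times 2}$ the polyconvex hull is the $\{\det=\delta\}$-slice of the convex hull computed in the $(\hat{\mt F},\det\hat{\mt F})$ variables, and $\det\equiv\delta$ on $\hat H$), and one shows this set coincides with $\hat H^{rc}$ and admits the stated finite description; the nontrivial inclusion $\hat H^{pc}\subseteq\hat H^{rc}$ is obtained by constructing, through each $\hat{\mt F}$ of determinant $\delta$ in $\mathrm{conv}(\hat H)$, a finite rank-one lamination terminating in the wells, exploiting the geometry of $2\times 2$ matrices of fixed determinant.

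\emph{Main obstacle.} The real work is the planar lamination: showing that a \emph{finite} list of norm inequalities, for a correctly chosen family $\{\hat{\vc w}_i\}$ (built from the rank-one connections among the wells $\hat{\mt U}_i$), already forces $\hat{\mt F}$ into $\hat H^{rc}$, presumably by induction on the number of wells. The necessity part and the bookkeeping of the $3\mathrm D\to 2\mathrm D$ reduction are, by comparison, routine.
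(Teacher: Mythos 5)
This theorem is not proved in the paper at all: the text explicitly quotes it as \cite[Theorem 2.5.1]{dolzman} and cites it as an off-the-shelf ingredient. There is therefore no internal proof to compare your attempt against; I can only assess your proposal on its own terms and against the route Dolzmann actually takes.

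Your overall strategy is the standard one and matches Dolzmann's proof in spirit: after rotating so that $\vc v = \vc e_3$, the common-eigenvector constraint makes the wells block-diagonal, and the three-dimensional problem collapses to the planar $n$-well problem with a fixed determinant. Your necessity argument is complete and clean. In particular, deducing $\mt F^T\mt F\vc v = \mu^2\vc v$ by bounding $|\mt F\vc v|^2$ and $|\cof\mt F\,\vc v|^2$ via polyconvexity and then closing the loop with Cauchy--Schwarz on $\mt C^{1/2}\vc v$ and $\mt C^{-1/2}\vc v$ is correct, and a nice way to convert two one-sided inequalities into an eigenvector identity.

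However, your ``sufficiency'' section is a roadmap, not a proof, and you say so yourself under ``Main obstacle.'' Everything rests on the planar lemma: that one can choose finitely many directions $\hat{\vc w}_i$ so that $\{\hat{\mt F}\in\R^{2\times 2}:\det\hat{\mt F}=\delta,\ |\hat{\mt F}\hat{\vc w}_i|^2\le\max_j|\hat{\mt U}_j\hat{\vc w}_i|^2\}$ equals $\hat H^{pc}$, and that $\hat H^{pc}=\hat H^{rc}$. Your identification $\hat H^{pc}=\mathrm{conv}(\hat H)\cap\{\det=\delta\}$ is correct in two dimensions, but the finite-description claim and the inclusion $\hat H^{pc}\subseteq\hat H^{rc}$ are precisely the nontrivial content of the planar $n$-well theorem (Dolzmann, building on \v{S}ver\'ak and Bhattacharya--Dolzmann). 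As written you invoke it rather than prove it, so the proposal is incomplete at its core step. If the intent is to cite the planar result as known, you should say so explicitly, in which case your proposal becomes a correct (and essentially the intended) reduction to it; if the intent is to prove the theorem from scratch, the planar lamination and the construction of the $\hat{\vc w}_i$ from the rank-one connections among the $\hat{\mt U}_i$ still need to be carried out.
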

Therefore, in this simple case which includes compound twins, we can actually construct the quasiconvex hull of the set. An interesting result is given by the following lemma:
%
%
\begin{lemma}
\label{curved inter lemma}
Let $\mt U_1,\dots,\mt U_n\in \R^{3\times 3}_{Sym^+}$, such that $\det \mt U_i =\mathfrak{D}>0$, and such that $\mt U_i\vc v =\mu\vc v$ for some $\mu>0, \vc v\in \mathbb{S}^2$, for each $i=1,\dots,n$. Let also 
$$
H = \bigcup_{i=1}^n SO(3)\mt U_i,
$$
and $\mu\neq 1$. 
Then every map $\vc y\in W^{1,\infty}(\Omega;\R^3)$ satisfying $\nabla\vc y(\vc x)\in H^{qc}$ and $\nabla\vc y(\vc x) = \mt 1 + \vc a(\vc x)\otimes \vc n(\vc x)$ for a.e. $\vc x$ in $\Omega$ 
is such that $\vc a\otimes\vc n$ is constant, and $|\vc a|=|\mu|^{-1}|\mathfrak{D}-\mu^2|$.
\end{lemma}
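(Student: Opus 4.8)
The plan is to use the explicit description of $H^{qc}$ to extract pointwise algebraic constraints on $\vc a,\vc n$, and then to combine these with the fact that $\vc z:=\vc y-\vc x$ is a genuine $W^{1,\infty}$ gradient field. First I would note that $\nabla\vc y=\mt 1+\vc a\otimes\vc n\in H^{qc}$ forces $\det\nabla\vc y=\mathfrak{D}$ (hence $\vc a\cdot\vc n=\mathfrak{D}-1$) and $(\nabla\vc y)^T\nabla\vc y\,\vc v=\mu^2\vc v$. Since $\nabla\vc y$ is rank-one connected to $\mt 1$, the eigenvalue relations recalled above give that the singular values of $\nabla\vc y$ are $\sigma_{min}\le 1\le\sigma_{max}$ with $\sigma_{min}\sigma_{max}=\mathfrak{D}$; since $\mu^2$ is an eigenvalue of $(\nabla\vc y)^T\nabla\vc y$ and $\mu\ne1$, we get $\{\sigma_{min},\sigma_{max}\}=\{\mu,\mathfrak{D}/\mu\}$. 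Comparing $\tr\!\big((\nabla\vc y)^T\nabla\vc y\big)=1+2\mathfrak{D}+|\vc a|^2$ with $\mu^2+1+\mathfrak{D}^2/\mu^2$ then yields $|\vc a|^2=(\mathfrak{D}-\mu^2)^2/\mu^2$, which is the asserted value of $|\vc a|$ (note this already forces $\mathfrak{D}\ne\mu^2$, as $\mathfrak{D}=\mu^2$ would give $\nabla\vc y=\mt 1$ and $\mu=1$).

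Next I would analyse the vector identity obtained by expanding $(\nabla\vc y)^T\nabla\vc y\,\vc v=\mu^2\vc v$: writing $\alpha:=\vc a\cdot\vc v$, $\beta:=\vc n\cdot\vc v$ it reads $(\mu^2-1)\vc v=\beta\vc a+(\alpha+|\vc a|^2\beta)\vc n$. One checks $\beta\ne0$ (otherwise $\vc n\parallel\vc v$, contradicting $\beta=0$), so $\vc a,\vc n,\vc v$ are coplanar; dotting the identity with $\vc n$ and with $\vc v$ pins down $\alpha=\kappa\beta$ and $\beta^2=\beta_0^2$ for explicit constants $\kappa\ne0$, $\beta_0\in(0,1]$. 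Replacing $(\vc a,\vc n)$ by $(\sgn(\beta)\vc a,\sgn(\beta)\vc n)$ — which leaves $\vc a\otimes\vc n$ unchanged and is measurable — I may assume $\beta\equiv\beta_0$. Setting $S:=\sqrt{1-\beta_0^2}$ and $\vc e:=S^{-1}(\vc n-\beta_0\vc v)$ produces a measurable unit field $\vc e$ with $\vc e\cdot\vc v=0$ and constants $A_1\ (\ne0)$, $A_2$ with $\vc n=\beta_0\vc v+S\vc e$ and $\vc a=A_1\vc v+A_2\vc e$ a.e. The degenerate value $\mathfrak{D}=\mu$ (where $\beta_0=1$, $S=0$ and $A_2=0$) is handled separately: there $\vc a\cdot\vc n=\mathfrak{D}-1=\pm|\vc a|$ forces $\vc a=(\mathfrak{D}-1)\vc n$ by Cauchy--Schwarz, and then $(\nabla\vc y)^T\nabla\vc y\,\vc v=\mu^2\vc v$ forces $\vc n\equiv\pm\vc v$. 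Thus everything is reduced to proving that the unit field $\vc e$ is constant.

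For the rigidity, I would use that each row $a_i\vc n=\nabla z_i$ of $\nabla\vc z=\vc a\otimes\vc n$ is curl-free in the distributional sense. The scalar $w:=\vc v\cdot\vc z\in W^{1,\infty}$ satisfies $\nabla w=(\vc v\cdot\vc a)\vc n=A_1\vc n$, so $\vc n=A_1^{-1}\nabla w$, and hence $\vc e=\nabla\zeta$ for a scalar $\zeta\in W^{1,\infty}$ with $\vc v\cdot\nabla\zeta=0$ a.e. Rotating coordinates so that $\vc v=\vc e_3$, the functions $\sigma_k:=\partial_k\zeta$ ($k=1,2$) depend only on $(x_1,x_2)$ with $\sigma_1^2+\sigma_2^2=1$ a.e.; writing out in coordinates that the distributional curl of the rows $a_1\vc n=A_2\sigma_1(S\sigma_1,S\sigma_2,\beta_0)$ and $a_2\vc n=A_2\sigma_2(S\sigma_1,S\sigma_2,\beta_0)$ vanishes gives $\partial_1\sigma_k=\partial_2\sigma_k=0$, and together with $\partial_3\sigma_k=\partial_k\partial_3\zeta=0$ this forces $\nabla\sigma_1=\nabla\sigma_2=0$ on the connected set $\Omega$. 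Hence $\sigma_1,\sigma_2$, and therefore $\vc e$, $\vc a$, $\vc n$ and $\vc a\otimes\vc n=\nabla\vc z$, are constant.

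The main obstacle is conceptual rather than computational: the hypothesis $\nabla\vc y\in H^{qc}$ taken pointwise is \emph{not} enough — it only fixes $|\vc a|$, the quantity $\vc a\cdot\vc v$, and the angle between $\vc n$ and $\vc v$, leaving a genuine one-parameter family of admissible values of $\vc a\otimes\vc n$ at each point. The compatibility of $\vc z$ must therefore be brought in, and the step that makes this effective is the observation that $\vc n$ (hence $\vc e$) is itself proportional to a gradient $\nabla\zeta$, after which requiring $\vc a\otimes\vc n$ to be a gradient forces the Hessian of $\zeta$ to vanish. The remaining effort is the routine bookkeeping of the degenerate parameter values.
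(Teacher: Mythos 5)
Your proof is correct, and it reaches the same conclusion as the paper's proof by a genuinely different route in the key rigidity step. The algebraic preliminaries are essentially equivalent: the paper writes out the full system \eqref{sistemone} and manipulates cross products to derive that $|\vc a|$ and $|n_3|$ are constant and that $\vc a$ lies in $\mathrm{span}\{\vc n,\vc v\}$; you extract the same information more directly from $\det\nabla\vc y=\mathfrak{D}$, $(\nabla\vc y)^T\nabla\vc y\,\vc v=\mu^2\vc v$, and the trace identity, exploiting that $\mu\ne 1$ pins down the singular values as $\{\mu,\mathfrak{D}/\mu,1\}$. Both treatments peel off the degenerate parameter values (for the paper, $\mathfrak{D}=\mu^2$, which is vacuous here; for you, $\mathfrak{D}=\mu$, which the paper handles by observing $n_1=n_2=0 \Rightarrow a_1=a_2=0$ and noting $\sgn(n_3)\vc n=\vc e_3$ is forced).

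The step where you part ways with the paper is the rigidity argument. The paper fixes the sign ambiguity so that $\sgn(n_3)\vc n=\nabla\psi$ for a scalar potential $\psi$, then proves $\Delta\psi=0$ by invoking the separately proved divergence identity $\nabla\cdot(\vc n\otimes\vc a)=0$ (Proposition 5.2), upgrades $\psi$ to $C^\infty$ by elliptic regularity, and finishes with a pointwise Hessian argument using $|\nabla\psi|=1$, the fact that $\vc e_3$ is a null (left) eigenvector of $\nabla^2\psi$, and $\tr\nabla^2\psi=0$. You instead take $w:=\vc v\cdot\vc z$, note $\nabla w=A_1\vc n$ so that $\vc n$ is already a gradient, isolate the perpendicular part $\vc e=\nabla\zeta$ with $\partial_3\zeta=0$, and then compute directly that vanishing of the distributional curls of the first two rows $a_1\vc n,\,a_2\vc n$ forces $\partial_j\sigma_k=0$ for $j,k=1,2$; together with $\partial_3\sigma_k=0$ this gives $\nabla^2\zeta=\mt 0$ as a distribution, hence $\vc e$ (and with it $\vc a\otimes\vc n$) is constant. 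Your route avoids both the auxiliary divergence-free Proposition and elliptic regularity, working purely with distributional derivatives of $W^{1,\infty}$ functions and the eikonal constraint $\sigma_1^2+\sigma_2^2=1$. The trade-off is that the paper's argument is cleaner conceptually (harmonic + eikonal + one fixed null direction forces a plane wave), whereas yours is more elementary but relies on spelling out the curl conditions row by row. Both are valid.
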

\begin{remark}
\label{condiz rem}
\rm
It comes up in the proof of Lemma \ref{curved inter lemma} that the following condition
\beq
\label{condizione 1}
1\geq\frac{{\mu^2}(1-\mu^2)}{(\mathfrak{D}^2-\mu^4)}>0
\eeq
is necessary in order to have the existence of $\vc a\in\R^3,\vc n\in\mathbb{S}^2$ such that $\mt 1+\vc a\otimes\vc n\in H^{qc}.$ We refer the reader to \cite{BallCarstensen2} for some stricter necessary conditions for this to hold.
\end{remark}
\begin{proof}
Thanks to a change of coordinates, we can suppose without loss of generality that $\vc v=\vc e_3$. In this case, every $\mt F\in H^{qc}$ satisfies 
\beq
\label{le Fs}
\mt F^T\mt F = \left[\begin{array}{ ccc } \alpha & \gamma & 0 \\ \gamma & \beta & 0 \\ 0 & 0 & \mu^2 \end{array}\right]
\eeq
and 
\begin{equation}
\label{constraints}
\alpha\beta-\gamma^2 = \mathfrak{D}^2\mu^{-2}.
\end{equation}
We are first interested in solving $\mt 1+\vc a\otimes\vc n\in H^{qc}$ for $\vc a\in\mathbb{R}^3,\vc n\in \mathbb{S}^2$.
By \eqref{le Fs}, the first step is to solve the following nonlinear system of equations for the components of $\vc a$ and $\vc n$:
\begin{equation}
\label{sistemone}
\begin{cases}
1+ 2a_1n_1+|a|^2n_1^2 &= \alpha\\
1+ 2a_2n_2+|a|^2n_2^2 &= \beta\\
a_1n_2+a_2n_1+|a|^2n_1n_2 &= \gamma\\
a_3n_1+a_1n_3+|a|^2n_1n_3 &= 0\\
a_3n_2+a_2n_3+|a|^2n_2n_3 &= 0\\
1+ 2a_3n_3+|a|^2n_3^2 &= \mu^2\\
\end{cases}
\end{equation}
subject to the constraint \eqref{constraints}.
As a first step we compute $\alpha\beta-\gamma^2$ using system \eqref{sistemone}. After rearranging terms
$$
\alpha\beta-\gamma^2 = 2(a_1n_1+a_2n_2)+1+|\vc a|^2(n_1^2+n_2^2)-(a_1n_2-a_2n_1)^2.
$$
Since $|\vc n|=1$, using last equation of \eqref{sistemone} follows that
$$
|\vc a|^2(n_1^2+n_2^2)=|\vc a|^2(1-n_3^2)=|\vc a|^2+1+2a_3n_3-\mu^2,
$$
which leads to 
$$
\alpha\beta-\gamma^2 = 2(a\cdot n)+2+|\vc a|^2-\mu^2-(a_1n_2-a_2n_1)^2.
$$
Finally, by recalling~\eqref{H2 det} we deduce 
\begin{equation}
\label{cross 0}
\alpha\beta-\gamma^2 = 2\mathfrak{D}+|\vc a|^2-\mu^2-(a_1n_2-a_2n_1)^2.
\end{equation}
Thus, it is immediately seen that \eqref{cross 0} together with the first of \eqref{constraints} implies
\begin{equation}
\label{cross 1}
 (a_1n_2-a_2n_1)^2 = 2\mathfrak{D}+|\vc a|^2-\mu^2-\frac{\mathfrak{D}^2}{\mu^2} = |\vc a|^2-\frac{1}{\mu^2}(\mathfrak{D}-\mu^2)^2.
\end{equation}
On the other hand, exploiting the last equation of \eqref{sistemone} in the fourth and fifth one we have
\begin{align}
\label{cross 2}
a_2n_3-a_3n_2 &=-\frac{n_2}{n_3}(\mu^2-1)\\
\label{cross 2 bis}
a_3n_1-a_1n_3 &=\frac{n_1}{n_3}(\mu^2-1).
\end{align}
We note that it is legitimate to divide by $n_3$ since the last identity in \eqref{sistemone} together with $\mu\neq1$ implies $n_3\neq 0$. 
By putting together \eqref{cross 1}-\eqref{cross 2 bis} we thus get
\beq
\label{cross}
 \vc a\times \vc  n = \left[\begin{array}{c} -\frac{n_2}{n_3}(\mu^2-1)\\ \frac{n_1}{n_3}(\mu^2-1)\\ \pm\sqrt{|\vc a|^2-\frac{1}{\mu^2}(\mathfrak{D}-\mu^2)^2} \end{array}\right].
\eeq
Since $(\vc a\times \vc n)\cdot\vc  n = 0$ we have
\beq
\label{cross 10}
n_3\sqrt{|\vc a|^2-\frac{1}{\mu^2}(\mathfrak{D}-\mu^2)^2}=0,
\eeq
which, as $n_3\neq0$, leads to $|\vc a|^2=\frac{1}{\mu^2}(\mathfrak{D}-\mu^2)^2$. In this case the norm of $\vc a$ is hence forced to be constant. We restrict ourselves without loss of generality to the case $\mathfrak{D}\neq\mu^2$.\\ 

We now want to write a in terms of the orthogonal vectors $(\vc n,\vc n^\perp,\vc n\times \vc n^\perp)$, where 
\beq
\label{n ampio}
\vc n^\perp := \left[\begin{array}{c}  n_2\\ -n_1\\ 0 \end{array}\right], \qquad \vc n\times \vc n^\perp =  \left[\begin{array}{c}  n_1n_3\\ n_2n_3\\ n_3^2-1 \end{array}\right].
\eeq
It is important to remark that, if $n_1=n_2=0$ then it is easy to see from \eqref{sistemone} that $a_1=a_2=0$, so we do not lose any generality with this representation.
We have,
$$
\vc a = a^{(1)}\vc n+a^{(2)}\vc n^\perp+a^{(3)}\vc n\times \vc n^\perp.
$$
As a first thing, recalling that $\det (\mt 1 +\vc a\otimes\vc n)=\mathfrak{D}$, we deduce that $a^{(1)}=\vc a\cdot\vc  n=\mathfrak{D}-1$. On the other hand, taking cross product of $\vc a$ with $\vc n$ follows that
\beq
\label{a cross}
 \vc n\times \vc a = a^{(2)}\vc n\times \vc n^\perp - a^{(3)}\vc n^\perp.
\eeq
A comparison of \eqref{a cross} with \eqref{cross} and \eqref{cross 10} thus leads to $a^{(3)}=\frac{1}{n_3}(1-\mu^2)$, and $a^{(2)}=0$, which implies
\beq
\label{a eq}
\vc a = (\mathfrak{D}-1)\vc n+\frac{1}{n_3}(1-\mu^2)\vc  n\times \vc n^\perp = (\mathfrak{D}-\mu^2)\vc n-\frac1{n_3}(1-\mu^2)\vc e_3,
\eeq
with $\vc e_3=(0,0,1)^T.$ 
As a first thing now we want to check whether it is possible to have $|\vc a|^2=\frac{1}{\mu^2}(\mathfrak{D}-\mu^2)^2$. Since the orthogonal vectors $\vc n$ and $\vc n\times \vc n^\perp$ satisfy $|\vc n|=1$ and $|\vc n\times \vc n^\perp|^2 = 1-n_3^2$, by rearranging the terms it is possible to obtain
\beq
\label{n3 eq}
n_3^2 = \frac{(1-\mu^2)^2}{(1-\mu^2)^2+\frac{1}{\mu^2}(\mu^2-\mathfrak{D})^2-(\mathfrak{D}-1)^2} = \frac{(1-\mu^2)}{\frac{1}{\mu^2}(\mathfrak{D}^2-\mu^4)}.
\eeq

It is easy to check that a pair of vectors $(\vc n,\vc a)$ with $\vc a$ defined in terms of $\vc n$ as in~\eqref{a eq}, and where $n_3$ is given by~\eqref{n3 eq}, satisfies the equations of~\eqref{sistemone}. Thus, it turns out that \eqref{condizione 1} in necessary in order not to contradict $|\vc n|=1$, $1-n_3^2=n_1^2+n_2^2\geq0$ and $n_3^2>0$ (see also Remark \ref{condiz rem}).\\

\noindent
Let us now check when a map $\vc y\in W^{1,\infty}(\Omega;\R^3)$ such that $\nabla\vc y(\vc x)$ satisfies \eqref{le Fs}--\eqref{constraints} and $\nabla\vc y(\vc x) = \mt 1 + \vc a(\vc x)\otimes \vc n(\vc x)$ for a.e. $\vc x$ in $\Omega$. We have to verify that conditions expressed in \eqref{H3} hold, that is, we have to check when the constructed deformations are actually gradients, by verifying that $\nabla\times(a_i\vc n)=0$ for $i=1,2,3$, in the distributional sense. Since $\mathfrak{D}$ and $\mu$ are constants, so is $|n_3| = \sqrt{(\mu^2-\mu^4)(\mathfrak{D}^2-\mu^4)^{-1}}$ . Hence, by \eqref{a eq}, $|a_3|$ is constant and non zero as long as $\mu^2\neq\mathfrak{D}$ and $\mu\neq 1$, which is our case. Furthermore, $n_3=\sgn(n_3)|n_3|$ and $a_3=\pm |a_3| \sgn{(n_3)}$, where the sign depends on $\mu,\mathfrak D$ only and is hence fixed. We can hence suppose without loss of generality $a_3= |a_3| \sgn{(n_3)}$. Therefore,
$$\nabla\times(a_3\vc n)=|a_3|\nabla\times (\sgn(n_3)\vc n)=0,
$$
implies
$$
\curl{(\sgn(n_3)\vc n)}=0,
$$
in the sense of distributions. This implies the existence of $\psi\in W^{1,2}(\Omega)$ such that $\sgn(n_3)\vc n=\nabla\psi$ (see e.g., \cite{GR}). On the other hand, by Proposition \ref{divnulla}
$$\nabla\cdot(\vc a n_3)=|n_3|\nabla\cdot(\sgn(n_3) \vc a)=0,
$$
which implies
$$
\nabla\cdot (\sgn(n_3) \vc a)=0,
$$
in the sense of distributions. Furthermore,
we have from \eqref{a eq} that
$$
\nabla \cdot (\sgn(n_3) \vc a) = (\mathfrak{D}-\mu^2)\nabla\cdot (\sgn(n_3) \vc n) 
$$
which thus implies that $\psi$ is harmonic in the sense of distributions. By using standard elliptic theory (see e.g., \cite{Evans}) we can thus deduce that $\psi\in C^\infty(\Omega)$. On the other hand, as $|\nabla\psi|^2=1$, we have
$$
\nabla\psi^T\nabla^2\psi=0, \qquad\text{for all $\vc x\in\Omega$},
$$
which implies that one eigenvalue of the Hessian matrix is null. Another eigenvalue is $0$ as $\vc e_3$ is a left eigenvector related to $0$ for $\nabla^2\psi$, and is not parallel to $\vc n$ unless $\vc n=\vc e_3$, in which case \eqref{n3 eq} forces $\sgn(n_3)\vc n=\vc e_3$ everywhere. The fact that $\psi$ is harmonic, i.e., $\tr \nabla^2\psi = 0$, thus means that $\nabla\psi$ is constant. Therefore, $\sgn(n_3)\vc n$ is constant, and as a consequence of \eqref{a eq}, so is $\sgn(n_3) \vc a$ and $\vc a\otimes \vc n$.
\end{proof}

{
\begin{remark}
\rm
In case $\mu=1$, it is not possible to deduce the same rigidity as in Lemma \ref{curved inter lemma}. Indeed, it can be deduced from equations \eqref{sistemone} that $\mu =1$ implies either $n_3=a_3=0$, or $\vc a = (\mathfrak{D}-1)\vc n$, but in the latter case the last equation in \eqref{sistemone} implies $\vc a\otimes\vc n = \mt 0$. The problem becomes thus two-dimensional, and we can rewrite 
$$
\vc n = \bigl( n_1,n_2,0\bigr)^T,\qquad \vc n^\perp = \bigl( n_2,-n_1,0\bigr)^T,\qquad \vc a(s) = (\mathfrak{D}-1)\vc n + s\vc n^\perp,
$$
for some $s\in \R, n_1,n_2\in [-1,1]$ with $n_1^2+n_2^2=1$. Now, take two martensite variants, for example, 
$$\mt U_1= \frac12\left[\begin{array}{ ccc } \lambda_m+\lambda_M & \lambda_m-\lambda_M & 0 \\ \lambda_m-\lambda_M & \lambda_m+\lambda_M & 0 \\ 0 & 0 & 2 \end{array}\right],\qquad \mt U_2= \frac12\left[\begin{array}{ ccc } \lambda_m+\lambda_M & \lambda_M-\lambda_m & 0 \\ \lambda_M-\lambda_m & \lambda_m+\lambda_M & 0 \\ 0 & 0 & 2 \end{array}\right],$$ 
generating a compound twin, and such that their biggest and smallest eigenvalue, namely $\lambda_M$ and $\lambda_m$, satisfy $\lambda_m<1<\lambda_M$. Assume further, for simplicity, $\lambda_m^2+\lambda_M^2>2$ and $\frac12\leq\lambda_m\lambda_M<1$. We can take for example $\lambda_m=0.9$ and $\lambda_M=1.1$. It can be computed that $(SO(3)\mt U_1\cup SO(3)\mt U_2)^{qc}$ coincides with the set of matrices $\mt F$ satisfying \eqref{le Fs}--\eqref{constraints}, that is such that $0<\alpha,\beta\leq \frac12(\lambda_m^2+\lambda_M^2)$ and $\alpha\beta\geq \lambda_m\lambda_M$. Consider now $\mt F =\mt 1 + \vc a(s)\otimes\vc n$, then
$$
\alpha = 1 + n_1^2(\mathfrak{D}^2-1+s^2) + s n_1n_2,\qquad\beta = 1 + n_2^2(\mathfrak{D}^2-1+s^2) - s n_1n_2.
$$
%
%
Choosing for simplicity $n_2=0$ and $s$ small enough, we get $$0<\alpha,\beta< \frac12(\lambda_m^2+\lambda_M^2),\qquad\alpha\beta\geq \lambda_m\lambda_M=\mathfrak{D}^2.$$ Thus, there exists $\eps>0,$ and an open interval $[-\eps,\eps]$ such that for every smooth function $f\colon \R \to [-\eps,\eps]$ we have that
$$
\mt 1 + \vc a(f(\vc x\cdot \vc n^\perp))\otimes \vc n,
$$
is the gradient of a smooth map $\vc y$, which is not constant, and which satisfies $\nabla\vc y(\vc x) \in (SO(3)\mt U_1\cup SO(3)\mt U_2)^{qc}$ and \eqref{h1}--\eqref{h2} for each $\vc x$. Therefore, a rigidity result as the one in Lemma \ref{curved inter lemma} does not hold in general when $\mu=1$.
\end{remark}
\begin{remark}
\rm 
In \cite{FDP3} the author proved that for cubic to monoclinic II phase transitions (and hence also for its special cases of cubic to orthorhombic and cubic to tetragonal phase transitions) necessary and sufficient condition to satisfy (CC1)--(CC2) with a compound twin is to have $\mu =1$. Is therefore not surprising that the case $\mu = 1$ is a special case for Lemma \ref{curved inter lemma}. This is coherent also with Proposition \eqref{nuovorisultatone} below.
\end{remark}
}

{
By adding the further hypotheses that $\vc y|_{\partial\Omega}$ is the restriction on $\partial\Omega$ of a $1-1$ map, we can extend Lemma \ref{curved inter lemma} to general two well problems
\begin{proposition}
\label{nuovorisultatone}
Let $\mt U,\mt V\in\R^{3\times 3}_{Sym^+}$ and $\mt R_I,\mt R_{II}\in SO(3)$, $\vc b_I,\vc b_{II},\vc m_{I},\vc m_{II}\in\R^3\setminus\{\vc 0\}$ satisfy
$$
\mt R_i\mt V = \mt U +\vc b_i\otimes\vc m_i,\qquad i=I,II,
$$
where $(\mt U,\vc b_{I},\vc m_{I})$, $(\mt U,\vc b_{II},\vc m_{II})$ do not fulfil (CC2). Assume $\vc y\in W^{1,\infty}(\Omega;\mathbb{R}^3)$ is such that $\vc y|_{\partial\Omega} = \vc y_0|_{\partial\Omega}$ for some $\vc y_0\in C(\overline{\Omega};\R^3)$ which is $1-1$ in $\Omega$, and
\beq
\label{superipotesi}
\nabla\vc y(\vc x)\in \bigl(SO(3)\mt U \cup SO(3)\mt V\bigr)^{qc},\qquad \nabla\vc y(\vc x)=\mt 1+\vc a(\vc x)\otimes \vc n(\vc x),
\eeq
a.e. in $\Omega$, for some $\vc a\in L^\infty(\Omega;\mathbb{R}^3), \vc n \in L^\infty(\Omega;\mathbb{S}^2) $. Then,
$$
\nabla\vc y = \text{constant}.
$$
\end{proposition}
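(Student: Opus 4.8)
\emph{Overview.} The plan is to prove that the hypotheses force $\nabla\vc y$ to take only finitely many values, and then to use the injective boundary data together with a degree argument to exclude every nontrivial arrangement of these values, leaving $\nabla\vc y$ constant.

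\emph{Step 1: reduction to finitely many gradient values.} The admissible values of $\nabla\vc y$ lie in
$$
\mathcal S:=\bigl\{\mt 1+\vc a\otimes\vc n:\ \vc a\in\R^3,\ \vc n\in\mathbb S^2\bigr\}\cap\bigl(SO(3)\mt U\cup SO(3)\mt V\bigr)^{qc}.
$$
Since $\mt U,\mt V$ are two variants of a twin with both twinning solutions $(\mt R_i,\vc b_i,\vc m_i)$, $i=I,II$, available, the quasiconvex hull of $SO(3)\mt U\cup SO(3)\mt V$ is explicitly known and equals the union of the left rotations of the two rank-one segments $\{\mt U+\lambda\,\vc b_i\otimes\vc m_i:\lambda\in[0,1]\}$. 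A matrix $\mt R(\mt U+\lambda\,\vc b_i\otimes\vc m_i)$ lies in $\mathcal S$ precisely when $(\mt R,\lambda,\cdot)$ solves the austenite--twin interface equation \eqref{auste marte}; by the converse direction of Theorem \ref{thm cof cond}, the failure of (CC2) for both $(\mt U,\vc b_I,\vc m_I)$ and $(\mt U,\vc b_{II},\vc m_{II})$ means that \eqref{auste marte} is solvable only for finitely many $\lambda$ (for each twin type), and for each such $\lambda$ it admits only the two standard solutions. Combined with the fact, from the Proposition preceding Proposition \ref{divnulla}, that every $\mt 1+\vc a\otimes\vc n\in K^{qc}$ has singular values with $\sigma_{mid}=1$, $\sigma_{min}\sigma_{max}=\mathfrak D$, this shows that $\mathcal S=\{\mt M_1,\dots,\mt M_N\}$ is finite, with $\mt M_k=\mt 1+\vc a_k\otimes\vc n_k$, and that the $\vc n_k$ are pairwise non-parallel. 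Hence $\nabla\vc y(\vc x)\in\{\mt M_1,\dots,\mt M_N\}$ for a.e.\ $\vc x\in\Omega$. (If $\mt U,\mt V$ form a compound twin one may instead observe that they share an eigenvector, necessarily with eigenvalue $\mu\neq1$ once (CC2) fails, so that Lemma \ref{curved inter lemma} already gives the conclusion; the injective boundary datum is needed only for type I/II twins.)

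\emph{Step 2: a.e.\ injectivity and conclusion.} Since $\vc y\in W^{1,\infty}(\Omega;\R^3)$, $\det\nabla\vc y=\mathfrak D>0$ a.e., and $\vc y|_{\partial\Omega}=\vc y_0|_{\partial\Omega}$ with $\vc y_0\in C(\overline\Omega;\R^3)$ injective --- hence a homeomorphism of $\overline\Omega$ onto $\vc y_0(\overline\Omega)$ --- a classical degree-theoretic global invertibility result (cf.\ \cite{Ball}) gives that $\vc y$ is injective for a.e.\ $\vc x\in\Omega$ and $\vc y(\Omega)=\vc y_0(\Omega)$. Now set $\Omega_k:=\{\vc x:\nabla\vc y(\vc x)=\mt M_k\}$. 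If two of these, say $\Omega_k,\Omega_l$, had positive measure and met along an interface of positive $\mathscr H^2$ measure, then localising Proposition \ref{rank one connections} (equivalently, Lemma \ref{Had 1}) would force $\mt M_k-\mt M_l=\vc b\otimes\vc m$ with $\vc m$ the interface normal; as $\mt M_k=\mt 1+\vc a_k\otimes\vc n_k$, $\mt M_l=\mt 1+\vc a_l\otimes\vc n_l$ and $\det\mt M_k=\det\mt M_l=\mathfrak D$, this is rank one only if $\vc n_k\parallel\vc n_l$ (excluded by Step 1) or $\vc a_k\parallel\vc a_l$. In the latter case, and more generally for any triple-junction or higher-order arrangement of the $\mt M_k$, one checks that the affine pieces of $\vc y$, forced to match continuously across the interfaces, would have to overlap (interpenetrate) around their common edge --- here $\det\mt M_k=\mathfrak D>0$ for all $k$ fixes the orientation of the image pieces --- contradicting the a.e.\ injectivity of $\vc y$. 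Hence exactly one $\Omega_k$ has positive measure, i.e.\ $\nabla\vc y$ is constant.

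\emph{Main obstacle.} The hard part is Step 1: pinning down precisely the slice of the two-well quasiconvex hull cut out by the rank-one-to-identity condition \eqref{h1}, and showing it is finite (with pairwise non-parallel normals) exactly when (CC2) fails --- this is where the full strength of Theorems \ref{thm cof cond}--\ref{CC for II} enters --- together with the interpenetration analysis in Step 2 that converts a.e.\ injectivity into the exclusion of every nontrivial microstructure. The degree-theoretic invertibility itself is, by comparison, routine.
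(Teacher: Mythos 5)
Your proposal is genuinely different from the paper's argument, and both of its steps contain gaps.

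\textbf{Step 1 rests on an unjustified (and, at least for compound twins, false) claim about the quasiconvex hull.} You assert that $\bigl(SO(3)\mt U\cup SO(3)\mt V\bigr)^{qc}$ ``equals the union of the left rotations of the two rank-one segments.'' That set is exactly the collection of simple-laminate averages, a $4$-parameter family. But the two-well quasiconvex hull is, in general, strictly larger: higher-order laminates produce macroscopic gradients outside the simple-laminate set. For compound twins this is already visible in the characterization quoted in the paper (the result from \cite{dolzman} preceding Lemma \ref{curved inter lemma}): the hull is cut out by $\det\mt F=\mathfrak D$, $\mt F^T\mt F\vc v=\mu^2\vc v$ and a few inequalities, which is a higher-dimensional variety, not a union of rotated segments. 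For type I/II twins the explicit characterization (Ball--James 1992) is likewise bigger. Hence the reduction of the admissible set
$\mathcal S=\{\mt 1+\vc a\otimes\vc n\}\cap K^{qc}$
to ``solutions of \eqref{auste marte}'' is not established, and neither is its finiteness nor the pairwise non-parallelism of the normals. The paper does not, at any point, claim or need that $\mathcal S$ is finite.

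\textbf{Step 2's ``interpenetration'' argument does not work.} A simple laminate with gradients $\mt M_k=\mt 1+\vc a_k\otimes\vc n$ and $\mt M_l=\mt 1+\vc a_l\otimes\vc n$ (same normal, $\mt M_k-\mt M_l$ rank one, both determinants equal to $\mathfrak D>0$) is a globally injective Lipschitz homeomorphism of $\Omega$: the affine pieces do \emph{not} overlap. So a.e.\ injectivity coming from a degree argument does not by itself exclude laminates, triple junctions, or other nontrivial piecewise-affine microstructures. Your claim that ``one checks that the affine pieces \dots would have to overlap'' needs a proof, and for the basic laminate case it is simply false. This is exactly why the injective boundary datum is used in a very different way in the paper.

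\textbf{What the paper actually does.} The injectivity hypothesis is used to invoke the plane-strain rigidity theorem of \cite{BallJamesPlane}: after the linear change of variables $\vc z^i(\vc x)=\vc y(\mt L_i\vc x)$ (for $i=I$ and $i=II$ separately), $\nabla\vc z^i$ lies in the quasiconvex hull of the symmetrized two-well set $SO(3)\mt S_i^-\cup SO(3)\mt S_i^+$, and plane-strain rigidity forces $\vc z^i$ to leave the middle direction $\vc u_2^i$ unchanged. Plugging $\nabla\vc y=\mt 1+\vc a\otimes\vc n$ into this structure produces the identities \eqref{aarigid1}--\eqref{aarigid2}, which say that either $\vc n$ (for $i=I$) or $\vc a$ (for $i=II$) must be constant up to sign, \emph{unless} a certain rotation $\mt Q$ exists with $(\mt Q-\mt L_i^{-T})\vc u_2^i=\vc 0$ (resp.\ $(\mt Q-\mt L_i)\vc u_2^i=\vc 0$). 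The existence of such a $\mt Q$ is then excluded by a computation with the quadratic polynomial
$f_i(\mu)=\det\bigl((\mt U+\mu\vc b_i\otimes\vc m_i)^T(\mt U+\mu\vc b_i\otimes\vc m_i)-\mt 1\bigr)$,
using $f_i(\mu)=f_i(1-\mu)$ together with $f_i'(0)=2\,\vc b_i\cdot\mt U\cof(\mt U^2-\mt 1)\vc m_i\neq 0$ --- this is precisely where the failure of (CC2) enters. Neither the finiteness of $\mathcal S$ nor any degree-theoretic interpenetration argument appears; constancy is obtained directly, first for $\vc n$ and then for $\vc a$, up to sign. Your compound-twin aside (invoking Lemma \ref{curved inter lemma}) is a correct observation, but it does not repair the argument for the type I/II case.
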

\begin{proof}
Following the approach devised in \cite{BallJames2}, we introduce the orthonormal system of coordinates
$$
\vc u_1^i:=\frac{\mt U^{-1}\vc m_i}{|\mt U^{-1}\vc m_i|} ,\qquad \vc u_3^i := \frac{\vc b_i}{|\vc b_i|}, \qquad\vc u_2^i := \vc u_3^i\times\vc u_1^i,
$$
with $i=I,II$ to be chosen later, and let 
$$
\mt L_i:= \mt U^{-1}\bigl( \mt 1 -\delta^i\vc u_3^i\otimes \vc u_1^i\bigr),\qquad \delta^i =\frac{1}{2}|\mt U^{-1}\vc m_i||\vc b_i|.
$$
We set $\vc z^i(\vc x):=\vc y(\mt L_i\vc x)$ and the problem becomes equivalent to find a map $\vc z^i\in W^{1,\infty}(\Omega;\R^3)$ such that
\beq
\label{defKqcT}
\nabla \vc z^i(\vc x) \in \bigl(SO(3)\mt S^-_i\cup SO(3)\mt S^+_i\bigr)^{qc},\qquad\text{a.e. $\vc x\in \Omega^{L_i}:=\bigl\{\vc x\colon \mt L_i\vc x\in \Omega	\bigr\}$},
\eeq
with $\mt S^\pm_i = \mt 1 \pm \delta^i\vc u_3^i\otimes\vc u_1^i,$ and
\beq
\label{bcz1}
\nabla\vc z^i(\vc x) = \mt L_i + \vc a(\mt L_i\vc x)\otimes \mt L_i^T\vc n(\mt L_i\vc x),\qquad\text{a.e. in $\Omega^{\mt L_i}$,}
\eeq
where $\vc a\in L^\infty(\Omega;\R^3)$, $\vc n\in L^\infty(\Omega;\mathbb{S}^2)$ are as in \eqref{superipotesi}. By \cite{BallJamesPlane}, $\vc z^i$ is a plane strain and satisfies
\beq
\label{bcz2}
\vc z^i(\vc x) = \mt Q\bigl(z_1^i(s_1^i,s_3^i)\vc u_1^i + s_2 \vc u_2^i + z_3^i(s_1^i,s_3^i) \vc u_3^i\bigr),
\eeq
for some $\mt Q\in SO(3),$ some Lipschitz functions $z_1^i,z_2^i$, and where $s_j^i = \vc x\cdot \vc u_j^i$, $j=1,2,3.$ As a consequence, from \eqref{bcz1}--\eqref{bcz2} we deduce
\begin{align*}
\vc u_2^i = \mt Q^T\nabla\vc z^i(\vc x) \vc u_2^i = \mt Q^T\mt L_i \vc u_2^i + \mt Q^T\vc a(\mt L_i\vc x)\bigl(\vc n(\mt L_i\vc x)\cdot \mt L_i\vc u_2^i\bigr),\\
\vc u_2^i = (\nabla\vc z^i(\vc x))^T\mt Q \vc u_2^i = \mt L_i^T\mt Q \vc u_2^i + \mt L_i^T\vc n(\mt L_i\vc x)\bigl(\vc a(\mt L_i\vc x)\cdot \mt Q\vc u_2^i\bigr), 
\end{align*}
a.e. in $\Omega^{\mt L_i}.$ That is,
\begin{align}
\label{aarigid1}
\vc a(\vc x)\bigl(\vc n(\vc x)\cdot \mt L_i\vc u_2^i\bigr) = (\mt Q-\mt L_i) \vc u_2^i,\\
\label{aarigid2}
\vc n(\vc x)\bigl(\vc a(\vc x)\cdot \mt Q\vc u_2^i\bigr) = (\mt L_i^{-T}-\mt Q) \vc u_2^i, 
\end{align}
a.e. in $\Omega.$ Let us now consider the function
$$
f_i(\mu) = \det \bigl( (\mt U + \mu\vc b_i\otimes\vc m_i)^T(\mt U + \mu\vc b_i\otimes\vc m_i) - \mt 1 \bigr),\qquad \mu \in [0,1],\, i=I,II.
$$
Thanks to \cite[Prop. 5]{BallJames1} we know that $f_i$ is a quadratic polynomial and $f_i(\mu)=f_i(1-\mu)$. We first notice that
\[
\begin{split}
f_i(\mu) &= 
\bigl(\det \mt U \bigr) \det\bigl((\mt U + \mu\vc b_i\otimes\vc m_i)	- (\mt U + \mu\vc b_i\otimes\vc m_i)^{-T}\bigr)
\\
&=
\bigl(\det \mt U^2 \bigr) \det\bigl((\mt 1-\mt U^{-2})	+ \mu 2\delta (\vc u_3^i \otimes \vc u_1^i + \vc u_1^i\otimes \mt U^{-2}\vc u_3^i)\bigr)
\\
&=
\det\bigl((\mt U^2-\mt 1)	+ \mu 2\delta (\vc u_3^i \otimes \mt U^2\vc u_1^i + \vc u_1^i\otimes \vc u_3^i)\bigr).
\end{split}
\]
A derivation of $f_i$ with respect to $\mu$ leads
\[
\begin{split}
&f_i'(\mu)\\
 &= 2\delta \bigl(\det \mt U^2 \bigr) \cof \bigl((\mt 1-\mt U^{-2})	+ \mu 2\delta (\vc u_3^i \otimes \vc u_1^i + \vc u_1^i\otimes \mt U^{-2}\vc u_3^i)\bigr) : (\vc u_3^i \otimes \vc u_1^i + \vc u_1^i\otimes \mt U^{-2}\vc u_3^i)\\
&= 
2\delta \bigl(\det \mt U^2 \bigr) \bigl( \cof (\mt 1-\mt U^{-2})\vc u_1^i\cdot (\vc u_3^i + \mt U^{-2}\vc u_3^i) + \mu 4\delta(\mt 1 - \mt U^{-2})\vc u_2^i\cdot( \vc u_1^i\times\mt U^{-2}\vc u_3^i) \bigr)\\
&= 
2\delta  \bigl( \cof (\mt U^2-\mt 1)\vc u_3^i\cdot (\vc u_1^i + \mt U^{2}\vc u_1^i) + \mu 4\delta(\mt U^2 - \mt 1)\vc u_2^i\cdot( \mt U^2\vc u_1^i\times \vc u_3^i) \bigr).
\end{split}
\]
Here we made use of the fact that $\cof\bigl(\sum_{i}\vc v_i\otimes \vc w_i\bigr) = \sum_{i< j} \vc v_i\times \vc v_j\otimes \vc w_i\times \vc w_j$. We now fix $i=I$ and claim that, under our hypotheses, there exist no $\mt Q\in SO(3)$ such that $(\mt Q-\mt L_I^{-T}) \vc u_2^I = \vc 0$. This, by \eqref{aarigid2} and the fact that $|\vc n|=1$ a.e. implies that $\vc n$ is, up to a change of sign, equal to a constant. That is, $\vc n\sgn(n_j)$ is constant, for some $j=1,2,3$ such that $|n_j|\neq 0$ a.e. in $\Omega$. To prove the claim we argue by contradiction, and notice that the existence of $\mt Q\in SO(3)$ satisfying $(\mt Q-\mt L_I^{-T}) \vc u_2^I = \vc 0$ implies $|\mt L_I^{-T}\vc u_2^I| = 1,$ that is
\beq
\label{magicaidentita}
(\mt U^2-\mt 1)\vc u_2^I\cdot\vc u_2^I=0.
\eeq
Let us notice now that
\beq
\label{paralleli1}
(\mt U^2\vc u_1^I\times\vc u_3^I)\times \vc u_2^I =\bigl(\mt U^2\vc u_1^I \cdot(\vc u_1^I\times\vc u_3^I) \bigr)\vc u_3^I.
\eeq
By making use of \eqref{10} in \eqref{paralleli1} we show that $\mt U^2\vc u_1^I \cdot(\vc u_1^I\times\vc u_3^I)=0,$ which by \eqref{paralleli1} implies that $\mt U^2\vc u_1^I\times\vc u_3^I$ is parallel to $\vc u_2^{I}$. Therefore, by \eqref{magicaidentita}, we get that $f_I'$ is constant in $\mu$. Furthermore, as $f_I$ is a quadratic polynomial and $f_I(\mu)=f_I(1-\mu)$ we have that $f_I'\Bigl(\frac12\Bigr)=0$ and hence $f_I'$ is identically $0.$ But, as 
$$
f_I'(\mu)|_{\mu=0} = 2\vc b_{I} \cdot\mt U\cof(\mt U^2-\mt 1) \vc m_{I} = 0,
$$ 
we contradict the assumption that $(\vc U,\vc b_{I},\vc m_{I})$ does not satisfy (CC2) concluding the proof of the claim. We now fix $i=II$ and claim that, under our hypotheses, there exist no $\mt Q\in SO(3)$ such that $(\mt Q-\mt L_{II}) \vc u_2^{II} = \vc 0$. This, by \eqref{aarigid1} and the fact that $\sgn(n_j)\vc n$ is a constant implies that $\sgn(n_j)\vc a$ is also a constant. To prove the claim we argue again by contradiction, and notice that the existence of $\mt Q\in SO(3)$ satisfying $(\mt Q-\mt L_{II}) \vc u_2^{II} = \vc 0$ implies $|\mt L_{II}\vc u_2^{II}| = 1,$ and thus
\beq
\label{magicaidentita2}
(\mt U^{-2}-\mt 1)\vc u_2^{II}\cdot\vc u_2^{II}=0.
\eeq
By making use of \eqref{11} we can now show that $\vc u_1^{II}\times\mt U^{-2}\vc u_3^{II}$ is parallel to $\vc u_2^{II}$, and hence, by \eqref{magicaidentita2}, that $f_{II}'$ is constant in $\mu$ and identically $0.$ But, noticing that 
$$
f_{II}'(\mu)|_{\mu=0} = 2\vc b_{II} \cdot\mt U\cof(\mt U^2-\mt 1) \vc m_{II} = 0,
$$ 
we contradict the assumption that $(\vc U,\vc b_{II},\vc m_{II})$ does not satisfy (CC2) concluding the proof of the second claim.

In conclusion, we proved that $\sgn(n_j)\vc n$ and $\sgn(n_j)\vc a$ are constants, and therefore so must be $\nabla\vc y$.
\end{proof}

\begin{remark}
It is clear from the proof of Proposition \ref{nuovorisultatone} that, if the type I solution $(\mt U,\vc b_I,\vc m_I)$ of the twinning equation \eqref{compatib condit} does not satisfy (CC2), but the type II solution $(\mt U,\vc b_{II},\vc m_{II})$ of \eqref{compatib condit} does, then we can guarantee that $\vc n$ in \eqref{superipotesi} is constant up to a change of sign. Similarly, if the type I solution $(\mt U,\vc b_I,\vc m_I)$ of the twinning equation \eqref{compatib condit} does satisfy (CC2), but the type II solution $(\mt U,\vc b_{II},\vc m_{II})$ of \eqref{compatib condit} does not, then the direction of $\vc a$ in \eqref{superipotesi} is constant. That is, there exists $\vc v\in\R^3$ such that $\vc a\times\vc v=0$ a.e. in $\Omega$. We refer the reader to Proposition \ref{type i inter} and Proposition \ref{type ii inter} for examples of non-affine maps when (CC2) is not satisfied.
\end{remark}
}

\section{Macroscopic moving interfaces}
\label{results}
In this section, we use the theory of the previous sections to prove some results about moving interfaces in martensitic transformations. The results are different for different type of twins. We start with compound twins and we recall that, by Proposition \ref{comp dom char}, two martensite variants $\mt U_1,\mt U_2\in\R^{3\times3}_{Sym^+}$ form a compound twin if and only if there exist $\mu>0,\vc v\in \mathbb{S}^2$ such that $\mt U_1 \vc v= \mt U_2\vc v=\mu \vc v$. Thanks to Lemma \ref{curved inter lemma} we can prove that  in this case moving interfaces need to be planar and the related macroscopic gradient constant.

\begin{theorem}
\label{thm mio}
Let $\mt U_1,\mt U_2\in \R^{3\times3}_{Sym^+}$ be a compound twin and such that $$\mt U_1\vc w =\mt U_2\vc w = \mu\vc w$$ for some $\vc w\in\R^3$, $\mu\neq 1$. 
Then, every $\vc y$ satisfying the regular moving mask approximation and such that
$$
\nabla \vc y \in (SO(3)\mt U_1\cup SO(3)\mt U_2)^{qc},\qquad\text{a.e. in $\Omega$}$$%
is constant, and the related moving interfaces planar.
\end{theorem}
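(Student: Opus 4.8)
The plan is to deduce Theorem~\ref{thm mio} from Corollary~\ref{reg mov mask} and Lemma~\ref{curved inter lemma}, adding only a short argument that turns the rigidity of $\vc a\otimes\vc n$ into planarity of the interfaces. First I would fix notation. Since $\vc y$ satisfies the regular moving mask approximation, Corollary~\ref{reg mov mask} provides $\mt Q\in SO(3)$, $\vc a\in L^\infty(\Omega;\R^3)$ and $\vc n\in L^\infty(\Omega;\mathbb{S}^2)$ with $\nabla\vc y=\mt Q+\vc a(\vc x)\otimes\vc n(\vc x)$ a.e.\ in $\Omega$. Replacing $\vc y$ by $\mt Q^T\vc y$, which leaves $(SO(3)\mt U_1\cup SO(3)\mt U_2)^{qc}$ unchanged because this set is left $SO(3)$-invariant, I may assume $\mt Q=\mt 1$, so that $\nabla\vc y=\mt 1+\vc a\otimes\vc n$ a.e.\ in $\Omega$; the map $\mt Q^T\vc y$ still satisfies the regular moving mask approximation. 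Finally, $\mt U_1$ and $\mt U_2$ share their eigenvalues, so $\det\mt U_1=\det\mt U_2=:\mathfrak{D}>0$, and by hypothesis $\mt U_i\vc w=\mu\vc w$ with $\mu\neq1$.

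Next I would apply Lemma~\ref{curved inter lemma} with $n=2$, $H=SO(3)\mt U_1\cup SO(3)\mt U_2$ and $\vc v=\vc w/|\vc w|$: all of its hypotheses hold, and it yields that $\vc a\otimes\vc n$ is constant with $|\vc a|=|\mu|^{-1}|\mathfrak{D}-\mu^2|$. Writing the constant rank-one matrix as $\vc a\otimes\vc n=\vc a_0\otimes\vc n_0$ with $\vc n_0\in\mathbb{S}^2$ (determined up to a common sign change of $\vc a_0,\vc n_0$), I may take $\vc a\equiv\vc a_0$, $\vc n\equiv\vc n_0$. In particular $\nabla\vc y=\mt 1+\vc a_0\otimes\vc n_0$ is constant, which is the first assertion. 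Moreover $\vc a_0\neq\vc 0$: if $|\vc a_0|=0$ then $\mathfrak{D}=\mu^2$, while \eqref{H2 det} forces $0=\vc a_0\cdot\vc n_0=\mathfrak{D}-1$, hence $\mu^2=\mathfrak{D}=1$, contradicting $\mu\neq1$.

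It then remains to prove planarity of the $\Gamma(t)$. Put $\vc z(\vc x):=\vc y(\vc x)-\vc x$, so that $\nabla\vc z=\vc a_0\otimes\vc n_0$ is a constant matrix and therefore $\vc z(\vc x)=(\vc n_0\cdot\vc x)\,\vc a_0+\vc c$ for some $\vc c\in\R^3$; since $\vc a_0\neq\vc 0$, this map is injective as a function of $\vc n_0\cdot\vc x$. Now, for a.e.\ $\vc x_0\in\Omega$ — namely at every regular point of $\Gamma(t)$ in the sense of Definition~\ref{DEf inter} which is also a point of differentiability of $\vc z$ — there are $t^*\in I_T$ with $\vc x_0\in\Gamma(t^*)$, a connected open neighbourhood $\mathcal U_{\vc x_0}$ and a Lipschitz $2$-graph $\mathcal G_{\vc x_0}$ through $\vc x_0$ with $\mathcal G_{\vc x_0}\cap\mathcal U_{\vc x_0}\subset\Gamma(t^*)\cap\mathcal U_{\vc x_0}\subset\overline\Omega_A(t^*)\cap\overline\Omega_M(t^*)\cap\mathcal U_{\vc x_0}$. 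Arguing exactly as in the first part of the proof of Theorem~\ref{curved Hadamard} (continuity of $\vc z$ and of the associated $\vc Z(\cdot,t^*)$ forces $\vc z$ to be constant on such a graph), $\vc z$ is constant on $\mathcal G_{\vc x_0}\cap\mathcal U_{\vc x_0}$; by injectivity of $\vc z$ in $\vc n_0\cdot\vc x$ this means $\vc n_0\cdot\vc x\equiv\vc n_0\cdot\vc x_0$ there, i.e.\ $\Gamma(t^*)$ agrees near $\vc x_0$ with the plane $\{\vc x:\vc n_0\cdot\vc x=\vc n_0\cdot\vc x_0\}$. Since such points exhaust $\Omega$ up to an $\mathscr L^3$-null set and since $\Gamma(t)$ separates the nested sets $\Omega_M(t)$ and $\Omega_A(t)$ (so that the $\vc n_0$-coordinate at which the interface sits is monotone in $t$), each $\Gamma(t)$ coincides with a single plane orthogonal to $\vc n_0$ intersected with $\Omega$; in particular the moving interfaces are planar.

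The main step is essentially Lemma~\ref{curved inter lemma}, which is available to us, so there is little genuine obstacle. The one point requiring care is the final paragraph: one must exploit the local moving-interface structure of Definition~\ref{DEf inter} — rather than appealing to Corollary~\ref{Coroll 1}, whose connectedness hypothesis on $\Omega_M(t),\Omega_A(t)$ is not assumed in Theorem~\ref{thm mio} — in order to pass from ``$\vc z$ is affine with constant normal $\vc n_0$'' to ``$\Gamma(t)$ is a plane orthogonal to $\vc n_0$''.
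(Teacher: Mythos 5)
Your proposal is correct and follows essentially the same route as the paper's proof: use Corollary~\ref{reg mov mask} (equivalently Theorem~\ref{curved Hadamard}) to get $\nabla\vc y=\mt Q+\vc a\otimes\vc n$, invoke Lemma~\ref{curved inter lemma} to conclude that $\vc a\otimes\vc n$ is constant (the rigidity step the paper also uses), and then observe that $\Gamma(t)$ is locally contained in a level set of the affine map $\vc z=\vc y-\vc x$, hence in a plane with normal $\vc n_0$. You are in fact a bit more careful than the paper on two small points (the reduction to $\mt Q=\mt 1$ and the verification $\vc a_0\neq\vc 0$); the one place you slightly overshoot is the closing claim that each $\Gamma(t)$ is a \emph{single} plane -- without a connectedness hypothesis on $\Omega_A(t)$ or $\Omega_M(t)$ one only gets a union of subsets of parallel planes, which is exactly the caveat the paper itself makes, and this does not affect the planarity assertion of the theorem.
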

\begin{proof}
As $\vc y$ satisfies a regular moving mask approximation, by Theorem \ref{curved Hadamard} we know that $\nabla \vc y =\mt 1+\vc a\otimes \vc n$ for some $\vc a\in L^\infty(\Omega;\R^3),\vc n \in L^\infty(\Omega;\mathbb{S}^2)$. Since $\mu\neq1$, we can apply Lemma \ref{curved inter lemma} and thus deduce that $\nabla \vc y$ is constant in $\Omega.$ The function $\vc z(\vc x) := \vc y(\vc x)-\vc x$ is such that $\vc z\in W^{1,\infty}(\Omega;\R^3)$ and is constant in every connected component of $\Omega_A(t)$, for each $t\geq0$. Thus, $\Gamma(t)$ must be a (or at least the union of disconnected subsets of a) level-set for $\vc z$, and hence a plane (or union of disconnected planes) as $\nabla\vc z$ is constant and rank-one in $\Omega$. 
\end{proof}

By arguing in the same way, Theorem \ref{thm mio} can be generalised to a wider range of situations as stated in Theorem \ref{thm mio 2} below. This is relevant, for example, in the cubic to monoclinic transformation occurring in Zn\textsubscript{45}Au\textsubscript{30}Cu\textsubscript{25}, where there are 3 sets of four deformation gradients satisfying the hypotheses of Theorem \ref{thm mio 2}.\\

\begin{theorem}
\label{thm mio 2}
Let $\mt U_1,\dots,\mt U_N\in \R^{3\times3}_{Sym^+}$ be such that $\mt U_i\vc w = \mu\vc w$ and $\det \mt U_i=\mathfrak{D}$ for some $\vc w\in\R^3$, $\mu\neq 1$ and every $i=1,\dots,n$. 
Then, every $\vc y$ satisfying the regular moving mask approximation and such that
$$
\nabla \vc y \in (\cup _{i=1}^NSO(3)\mt U_i)^{qc},\qquad\text{a.e. in $\Omega$}$$%
is constant, and the related moving interfaces planar.
\end{theorem}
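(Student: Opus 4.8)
The argument is a direct adaptation of the proof of Theorem \ref{thm mio}; the only point to notice is that Lemma \ref{curved inter lemma} is already stated for an arbitrary number $N$ of variants $\mt U_1,\dots,\mt U_N$ sharing a common eigenvector $\vc w$ with eigenvalue $\mu$ and a common determinant $\mathfrak{D}>0$, which are exactly the hypotheses assumed here. I would proceed in three steps.

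First, since $\vc y$ satisfies a regular moving mask approximation, Corollary \ref{reg mov mask} (or directly Theorem \ref{curved Hadamard}) provides $\vc a\in L^\infty(\Omega;\R^3)$ and $\vc n\in L^\infty(\Omega;\mathbb{S}^2)$ such that
\[
\nabla\vc y(\vc x)=\mt 1+\vc a(\vc x)\otimes\vc n(\vc x),\qquad\text{a.e. }\vc x\in\Omega.
\]
Using in addition the hypothesis $\nabla\vc y\in(\cup_{i=1}^N SO(3)\mt U_i)^{qc}$ a.e., I would apply Lemma \ref{curved inter lemma} with $H=\cup_{i=1}^N SO(3)\mt U_i$; this is legitimate precisely because $\mu\neq 1$, $\det\mt U_i=\mathfrak{D}$ and $\mt U_i\vc w=\mu\vc w$ for every $i$. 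The lemma then yields that $\vc a\otimes\vc n$, and hence $\nabla\vc y$, is constant on $\Omega$.

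Second, I would pass to the displacement $\vc z(\vc x):=\vc y(\vc x)-\vc x\in W^{1,\infty}(\Omega;\R^3)$, so that $\nabla\vc z=\vc a\otimes\vc n$ is a fixed rank-one matrix. For each $t$, the function $\vc Z(\cdot,t)$ of Theorem \ref{curved Hadamard} has zero gradient a.e. on $\Omega_A(t)$, hence by continuity is constant on every connected component of $\overline\Omega_A(t)$; arguing as in the proof of Theorem \ref{curved Hadamard}, $\vc z$ itself is then constant on each such component. Since every point of $\Gamma(t)$ lies in $\overline\Omega_A(t)\cap\overline\Omega_M(t)$, the interface $\Gamma(t)$ is contained in a level set of $\vc z$; because $\nabla\vc z$ is the constant rank-one matrix $\vc a\otimes\vc n$, such level sets are unions of pieces of planes orthogonal to $\vc n$, so the moving interfaces are planar.

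There is no real obstacle here beyond what is already packaged in Lemma \ref{curved inter lemma} and Theorem \ref{curved Hadamard}: the substance of the statement is simply that the rigidity furnished by Lemma \ref{curved inter lemma} is insensitive to the number of variants, which is built into that lemma. The only mildly delicate point is the same one appearing in Theorem \ref{thm mio}, namely that $\Gamma(t)$ need only be a subset of a level set of $\vc z$ and may a priori split into several disjoint planar pieces rather than a single plane; this does not affect the conclusion that each of its connected components is planar.
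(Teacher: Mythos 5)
Your proof is correct and follows exactly the approach the paper indicates: the paper proves Theorem \ref{thm mio 2} by remarking that one can ``argue in the same way'' as in Theorem \ref{thm mio}, since Lemma \ref{curved inter lemma} is already stated for an arbitrary number of variants sharing a common eigenvector and determinant. Your three steps reproduce that argument faithfully, including the observation that $\Gamma(t)$ may be a union of disjoint planar pieces.
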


{
An equivalent result can be proved, in the same way, for the general two well problem under the additional assumption that $\vc y$ coincides on $\partial\Omega$ with a $1-1$ map.
\begin{theorem}
\label{thm mio 3}
Let $\mt U,\mt V\in\R^{3\times 3}_{Sym^+}$ and $\mt R_I,\mt R_{II}\in SO(3)$, $\vc b_I,\vc b_{II},\vc m_{I},\vc m_{II}\in\R^3\setminus\{\vc 0\}$ satisfy
$$
\mt R_i\mt V = \mt U +\vc b_i\otimes\vc m_i,\qquad i=I,II,
$$
where $(\mt U,\vc b_{I},\vc m_{I})$, $(\mt U,\vc b_{II},\vc m_{II})$ do not fulfil (CC2). Then, every $\vc y$ satisfying the regular moving mask approximation, such that $\vc y|_{\partial\Omega} = \vc y_0|_{\partial\Omega}$ for some $\vc y_0\in C(\overline{\Omega};\R^3)$ which is $1-1$ in $\Omega$, and such that
$$
\nabla\vc y(\vc x)\in \bigl(SO(3)\mt U \cup SO(3)\mt V\bigr)^{qc},
$$
is constant, and the related moving interfaces planar.
\end{theorem}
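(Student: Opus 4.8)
The plan is to reduce Theorem \ref{thm mio 3} to Proposition \ref{nuovorisultatone} in exactly the way Theorem \ref{thm mio} was reduced to Lemma \ref{curved inter lemma}. First I would invoke Corollary \ref{reg mov mask}: since $\vc y$ satisfies a regular moving mask approximation, there exist $\mt Q\in SO(3)$, $\vc a\in L^\infty(\Omega;\R^3)$ and $\vc n\in L^\infty(\Omega;\mathbb{S}^2)$ with $\nabla\vc y=\mt Q+\vc a\otimes\vc n$ a.e.\ in $\Omega$. Then I would replace $\vc y$ by $\mt Q^T\vc y$. This map again satisfies a regular moving mask approximation, now with rotation $\mt 1$ and with the same $\Omega_M(t),\Omega_A(t)$, hence the same interfaces $\Gamma(t)$; its boundary datum is $\mt Q^T\vc y_0|_{\partial\Omega}$ with $\mt Q^T\vc y_0\in C(\overline\Omega;\R^3)$ still one-to-one in $\Omega$; and its gradient lies in $\mt Q^T(SO(3)\mt U\cup SO(3)\mt V)^{qc}=(SO(3)\mt U\cup SO(3)\mt V)^{qc}$, by the left $SO(3)$-invariance of the set and of its quasiconvex hull. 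Thus without loss of generality $\mt Q=\mt 1$ and $\nabla\vc y=\mt 1+\vc a\otimes\vc n$ a.e.\ in $\Omega$.

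Next I would apply Proposition \ref{nuovorisultatone} directly. Its hypotheses on $\mt U,\mt V,\mt R_I,\mt R_{II},\vc b_I,\vc b_{II},\vc m_I,\vc m_{II}$, in particular the failure of (CC2) for both twinning solutions, are precisely those assumed in Theorem \ref{thm mio 3}; the boundary condition $\vc y|_{\partial\Omega}=\vc y_0|_{\partial\Omega}$ with $\vc y_0$ continuous on $\overline\Omega$ and one-to-one in $\Omega$ is exactly what is required; and we have just arranged $\nabla\vc y=\mt 1+\vc a\otimes\vc n\in(SO(3)\mt U\cup SO(3)\mt V)^{qc}$ a.e. Proposition \ref{nuovorisultatone} then yields that $\nabla\vc y$ is constant on $\Omega$, and undoing the rotation reduction, the gradient of the original map is constant as well.

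For the geometry of the interfaces I would then argue as in the proof of Theorem \ref{thm mio}. Put $\vc z(\vc x):=\vc y(\vc x)-\mt Q\vc x$, so $\vc z\in W^{1,\infty}(\Omega;\R^3)$ and $\nabla\vc z=\vc a\otimes\vc n$ is constant. If $\nabla\vc z\equiv\mt 0$ the map is affine and nothing more is needed; otherwise $\nabla\vc z$ is a fixed rank-one matrix with $\vc a\neq\vc 0$, and the level sets of $\vc z$ are exactly the hyperplanes $\{\vc x:\vc n\cdot\vc x=\text{const}\}$. By \eqref{Z cond 2} the companion map $\vc Z(\cdot,t)$ is continuous and locally constant on $\Omega_A(t)$; combining the continuity of $\vc z$ and of $\vc Z(\cdot,t)$ at regular points of $\Gamma(t)$, which lie in $\overline\Omega_A(t)\cap\overline\Omega_M(t)$, shows that $\vc z$ takes a single value along each connected piece of $\Gamma(t)$, so each such piece lies in one of those hyperplanes and the moving interfaces are planar (possibly a union of disconnected parallel planes). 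I do not expect a genuine obstacle here: the argument is a recombination of Corollary \ref{reg mov mask}, Proposition \ref{nuovorisultatone} and the interface argument of Theorem \ref{thm mio}. The only point deserving a word of care is the reduction to $\mt Q=\mt 1$, which rests on the left $SO(3)$-invariance of both $SO(3)\mt U\cup SO(3)\mt V$ and its quasiconvex hull.
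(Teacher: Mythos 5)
Your proof is correct and follows essentially the paper's own route: the paper states Theorem \ref{thm mio 3} as something that ``can be proved, in the same way'' as Theorem \ref{thm mio}, replacing Lemma \ref{curved inter lemma} by Proposition \ref{nuovorisultatone}, which is precisely the reduction you carry out. Your only addition is to make explicit the normalization to $\mt Q=\mt 1$ via left $SO(3)$-invariance of $SO(3)\mt U\cup SO(3)\mt V$ and of its quasiconvex hull, a step the paper takes implicitly as a without-loss-of-generality assumption at the start of Section \ref{Basic}.
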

}

The next result is related to type I twins satisfying the cofactor conditions. In this case we can prove that simple laminates can form macroscopically curved families of austenite-martensite interfaces with no transition layer. The proof strongly relies on Theorem \ref{CC for I}.
\begin{proposition}
\label{type i inter}
Let $\mt U_1$ and $\mt U_2$ be two martensitic variants and $\hat{\mt R}\in SO(3)$, $\vc b_I,\vc m_{I}\in\R^3$ be a type I solution to \eqref{compatib condit} and satisfying the cofactor conditions. Then, there exist $\mt R_0,\mt R_1\in SO(3)$, $\xi\in\R$, $\vc a_0\in \R^3$, $\vc n_0,\vc n_1\in\mathbb{S}^2$ such that
for every $\lambda \in L^\infty(\Omega;[0,1])$ satisfying $\nabla \lambda\times (\xi \vc n_1-\vc n_0)=\vc 0$ in the sense of distributions, there exists $\vc y\in W^{1,\infty}(\Omega;\R^3)$ with
$$
\nabla\vc y = \mt R_0 [(1-\lambda)\mt  U_1 +\lambda \hat{\mt R}\mt U_2]= \mt {1} + \vc a_0\otimes \bigr(\lambda \xi \vc n_1+(1-\lambda)\vc n_0\bigl),\qquad\text{a.e. in $\Omega$.}
$$ 
Furthermore, $\vc y$ satisfies a regular moving mask approximation, the related moving interfaces are curved, and $\nabla \cdot \bigl( |\lambda \xi \vc n_1+(1-\lambda)\vc n_0|\vc a_0 \bigr)=0$ in the sense of distributions.
\end{proposition}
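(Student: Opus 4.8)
The plan is to read off the vectors and rotations from the cofactor‑condition structure, integrate the prescribed gradient to a Lipschitz map, check that it falls under Corollary~\ref{reg mov mask}, and finally verify the divergence identity; the only delicate point will be the merely $L^\infty$ regularity of $\lambda$. First I would apply Theorem~\ref{CC for I} to the given type~I solution $(\hat{\mt R},\vc b_I,\vc m_I)$ of \eqref{compatib condit}, obtaining $\mt R_0\in SO(3)$, $\vc a_0\in\R^3$, $\vc n_0,\vc n_1\in\mathbb{S}^2$ and $\xi\neq0$ with $\mt R_0\mt U_1=\mt 1+\vc a_0\otimes\vc n_0$ and $\mt R_0(\mt U_1+\vc b_I\otimes\vc m_I)=\mt 1+\vc a_0\otimes\xi\vc n_1$; I put $\mt R_1:=\mt R_0\hat{\mt R}$, so that $\mt R_1\mt U_2=\mt 1+\xi\vc a_0\otimes\vc n_1$. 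Set $\vc w:=\xi\vc n_1-\vc n_0$ and, for an admissible $\lambda$, $\vc n(\vc x):=\lambda(\vc x)\,\xi\vc n_1+(1-\lambda(\vc x))\,\vc n_0=\lambda(\vc x)\,\vc w+\vc n_0$. Because $\hat{\mt R}\mt U_2-\mt U_1=\vc b_I\otimes\vc m_I$, identity \eqref{forall lambda} evaluated at the number $\lambda(\vc x)\in[0,1]$ gives $\mt R_0[(1-\lambda)\mt U_1+\lambda\hat{\mt R}\mt U_2]=\mt R_0[\mt U_1+\lambda\vc b_I\otimes\vc m_I]=\mt 1+\vc a_0\otimes\vc n$ a.e.\ in $\Omega$. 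The crucial algebraic fact is $\vc a_0\cdot\vc w=0$: taking determinants in $\mt 1+\vc a_0\otimes\vc n_0=\mt R_0\mt U_1$ and in $\mt 1+\xi\vc a_0\otimes\vc n_1=\mt R_0(\mt U_1+\vc b_I\otimes\vc m_I)$ yields $\vc a_0\cdot\vc n_0=\mathfrak D-1=\xi\,\vc a_0\cdot\vc n_1$, hence $\vc a_0\cdot\vc w=0$; moreover $\vc a_0\neq\vc 0$ (since $\mt U_1\neq\mt 1$) and $\vc w\neq\vc 0$ (since $\vc a_0\otimes\vc w=\mt R_0\vc b_I\otimes\vc m_I$ with $\vc b_I,\vc m_I\neq\vc 0$).

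\emph{Existence of $\vc y$.} Since $\vc n_0,\vc n_1$ are constant, $\curl(a_{0,i}\vc n)=a_{0,i}\,\curl\vc n=a_{0,i}\,\nabla\lambda\times\vc w=\vc 0$ for $i=1,2,3$, by the hypothesis $\nabla\lambda\times(\xi\vc n_1-\vc n_0)=\vc 0$; so $\mt 1+\vc a_0\otimes\vc n\in L^\infty(\Omega;\R^{3\times3})$ is curl free and there is $\vc y\in W^{1,\infty}(\Omega;\R^3)$ with $\nabla\vc y=\mt 1+\vc a_0\otimes\vc n$ (concretely, $\nabla h=\vc n$ has a bounded Lipschitz primitive $h$ and one takes $\vc y=\vc x+h\,\vc a_0$, so that $\vc z:=\vc y-\vc x=h\,\vc a_0$ up to a constant). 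Then $\vc z(\Omega)$ is contained in the straight segment $\{t\,\vc a_0:\ t\in[\inf_\Omega h,\sup_\Omega h]\}$, a simple absolutely continuous curve of finite length, as $\Omega$ is bounded.

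\emph{Regular moving mask and curved interfaces.} By Lemma~\ref{measurable} I rewrite $\vc a_0\otimes\vc n=\vc a(\vc x)\otimes\hat{\vc n}(\vc x)$ with $|\hat{\vc n}|=1$ and $\vc a=|\vc n|\,\vc a_0\in L^\infty(\Omega;\R^3)$; provided $\vc n_1\neq\pm\vc n_0$ — the degenerate case being excluded — the segment $\{t\vc w+\vc n_0:\ t\in[0,1]\}$ avoids the origin, so $|\vc n|$, hence $|\vc a|$, is bounded away from $0$. Corollary~\ref{reg mov mask} (with $\mt Q=\mt 1$), together with the curve bound on $\vc z(\Omega)$, then gives that $\vc y$ satisfies a regular moving mask approximation, and by the construction in Theorem~\ref{curved Hadamard} (cf.\ Corollary~\ref{Coroll 1}) the moving interfaces $\Gamma(t)$ are subsets of level sets of $\vc z$, i.e.\ of $h$. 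Locally (or on a convex subdomain), $\nabla\lambda\parallel\vc w$ forces $\lambda=\tilde\lambda(\vc x\cdot\vc w)$, so $h(\vc x)=H(\vc x\cdot\vc w)+\vc x\cdot\vc n_0$ with $H'=\tilde\lambda$; since $\vc n_0$ is not parallel to $\vc w$, the level set $\{h=c\}$ is, in suitable affine coordinates, the graph $\vc x\cdot\vc n_0=c-H(\vc x\cdot\vc w)$ — a plane precisely when $H$ is affine, i.e.\ when $\lambda$ is constant. Hence a non‑constant $\lambda$ produces genuinely curved interfaces, which is the assertion.

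\emph{Divergence identity.} Finally $\nabla\cdot(|\vc n|\vc a_0)=\vc a_0\cdot\nabla|\vc n|$ since $\vc a_0$ is constant, and $|\vc n|^2=1+2(\vc n_0\cdot\vc w)\lambda+|\vc w|^2\lambda^2$ depends smoothly on $\lambda$. I would mollify: $\lambda_\eps:=\lambda*\rho_\eps\in C^\infty$, $\lambda_\eps\in[0,1]$, $\lambda_\eps\to\lambda$ a.e.\ and in $L^1_{loc}$, with $\nabla\lambda_\eps\times\vc w=(\nabla\lambda\times\vc w)*\rho_\eps=\vc 0$, so $\nabla\lambda_\eps\parallel\vc w$; consequently $\nabla|\vc n_\eps|\parallel\vc w$ and, since $\vc a_0\cdot\vc w=0$, $\nabla\cdot(|\vc n_\eps|\vc a_0)=\vc a_0\cdot\nabla|\vc n_\eps|=0$ classically, where $|\vc n_\eps|=(1+2(\vc n_0\cdot\vc w)\lambda_\eps+|\vc w|^2\lambda_\eps^2)^{1/2}$. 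Testing against $\psi\in C^\infty_c(\Omega)$ and letting $\eps\to0$ (the $|\vc n_\eps|$ are uniformly bounded and converge to $|\vc n|$ in $L^1_{loc}$) gives $\int_\Omega|\vc n|\,\vc a_0\cdot\nabla\psi\,\mathrm d\vc x=0$, i.e.\ $\nabla\cdot\bigl(|\lambda\xi\vc n_1+(1-\lambda)\vc n_0|\,\vc a_0\bigr)=0$ in the sense of distributions; this is consistent with Proposition~\ref{divnulla} whenever $\hat{\vc n}\in W^{1,1}$. I expect the main obstacle to be purely technical — routing the chain rule on $|\vc n|$, the existence of the primitive $h$, and the description of the level sets through the transversal mollification (or through convex subdomains) and recording the non‑degeneracy $\vc n_1\neq\pm\vc n_0$ — the single piece of genuine content being the identity $\vc a_0\cdot\vc w=0$ coming from the determinant constraints.
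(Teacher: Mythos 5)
Your proof is correct and follows essentially the same route as the paper's: apply Theorem~\ref{CC for I}, derive $\vc a_0\cdot(\xi\vc n_1-\vc n_0)=0$ from the determinant constraint, verify the curl-free condition reduces to $\nabla\lambda\times(\xi\vc n_1-\vc n_0)=\vc 0$, integrate to obtain $\vc y=\vc x+h\,\vc a_0$, invoke Corollary~\ref{reg mov mask}, and prove the divergence identity by mollification using $\nabla\lambda_\eps\parallel\xi\vc n_1-\vc n_0\perp\vc a_0$. Your two extra observations — that one must exclude the degenerate case where $\vc n_0+\lambda(\xi\vc n_1-\vc n_0)$ could vanish for $\lambda\in[0,1]$ (needed so that $|\vc a|>0$ a.e.\ as required by the second part of Theorem~\ref{curved Hadamard}), and that the interfaces are genuinely curved only when $\lambda$ is non-affine in $\vc x\cdot(\xi\vc n_1-\vc n_0)$ — are points the paper glosses over, and are worth keeping explicit.
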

\begin{proof}
From Theorem \ref{CC for I} and in particular from~\eqref{forall lambda} we know the existence of $\mt R_0,\mt R_1\in SO(3)$, $\xi\in\R$, $\vc a_0\in \R^3$, $\vc n_0,\vc n_1\in\mathbb{S}^2$ such that
$$
\mt R_0 [(1-\lambda)\mt  U_1 +\lambda \hat{\mt R}\mt U_2]= \mt {1} + \vc a_0\otimes \bigr(\lambda \xi \vc n_1+(1-\lambda)\vc n_0\bigl),\qquad\text{for all $\lambda\in[0,1]$.}
$$ 
%
We thus choose $\lambda\in L^\infty(\Omega)$ to be a function such that $\lambda\in[0,1]$ a.e., and define
\begin{align*}
&\vc a(\vc x):=\vc a_0 |\vc n_0+\lambda(\vc x)(\xi \vc n_1-\vc n_0)|,
&\vc n(\vc x):=\frac{\vc n_0+\lambda(\vc x)(\xi \vc n_1-\vc n_0)}{|\vc n_0+\lambda(\xi \vc n_1-\vc n_0)|},
\end{align*}
so that $\vc n$ has unitary norm. 
In the notation of Theorem \ref{CC for I}, we have
$$\det(\mt R_0[(1-\lambda) \mt U_1+\lambda \hat{\mt R}\mt U_2])=\det(\mt R_0)\det(\mt U_1+\lambda \vc b_I\otimes \vc m_I)=\det \mt U_1 
$$
where the Sherman-Morrison inversion formula and the fact that $\mt U_1^{-1}\vc n\cdot \vc a=0$ have been used. That is,
$$
\vc a_0\cdot (\vc n_0+\lambda(\xi \vc n_1-\vc n_0))
$$
is constant independently of $\lambda$, or, in an equivalent way,
\beq
\label{sono ortogonali}
\vc a_0\cdot (\xi \vc n_1-\vc n_0)=0. 
\eeq
We just need to check if it is possible to have
$$
\curl(a_i\vc  n)=0.
$$
Exploiting the definition of $\vc a$ and $\vc n$ get that this is satisfied if and only if
\beq
\label{perpend}
\nabla \lambda\times (\xi \vc n_1-\vc n_0)=\vc 0,
\eeq
in a weak sense. Therefore, if $\Omega$ is convex $\lambda$ must satisfy $\lambda(\vc x) = f(\vc x\cdot(\xi \vc n_1-\vc n_0) )$, for some $f\in L^\infty(\R;[0,1])$, and thus
$$
\vc y = \vc x+ \vc a_0 (\vc n_0\cdot \vc x+F(\vc x\cdot(\xi \vc n_1-\vc n_0))+\vc c,
$$
for some constant $\vc c\in\R^3$, and where $F(s)=\int_0^sf(s)\,\mathrm ds$. Therefore, after choosing 
$$I_T:=\bigl(\inf_{\vc x\in\Omega}G(\vc x),\sup_{\vc x\in\Omega}G(\vc x)\bigr),$$ 
where $G(\vc x)=\vc n_0\cdot \vc x+F(\vc x\cdot(\xi \vc n_1-\vc n_0))$, we deduce that the image of $\vc z(\vc x)=\vc y(\vc x)-\vc x$ is $\vc c+t\vc a_0$, for $t\in I_T$. If $\Omega$ is connected but not convex, then $\lambda$ might not be of the form $f(\vc x\cdot(\xi\vc n_1-\vc n_0))$, but the image of $\vc z(\vc x) = \vc y(\vc x) - \vc x$ is still contained in the one-dimensional line $\vc c+t\vc a_0$, for some $\vc c\in\R^3$ and $t$ in some bounded interval $\hat I_T$. 
We can thus use Corollary \ref{reg mov mask} and deduce the existence of a family of moving interfaces, which are also level sets for $\vc z(\vc x)$.\\
In order to prove that $\nabla\cdot \vc a$, we first mollify $\lambda$ and, defined $\vc m$ as $\vc m:=\xi \vc n_1-\vc n_0$, notice that thanks to Fubini's theorem for distributions we can write
$$
\bigl\langle\nabla \lep \times \vc m, \boldsymbol\psi \bigr\rangle_{\mathcal D',\mathcal D} = \bigl\langle \nabla\lambda,(\vc m\times\boldsymbol\psi_\eps)\bigr\rangle_{\mathcal D',\mathcal D} = \bigl\langle \nabla\lambda\times \vc m,\boldsymbol\psi_\eps\bigr\rangle _{\mathcal D',\mathcal D}= 0,
$$
thanks to \eqref{perpend}, for all $\boldsymbol\psi\in C^\infty_c(\Omega,\R^3)$.  Therefore, $\nabla\lep \parallel \vc m$, and, by \eqref{sono ortogonali},
\beq
\label{tocitediva}
\nabla\lep\cdot \vc a_0=0.\eeq 
On the other hand, exploiting the smooth dependence on $\lambda$ of $\vc a$ and the fact that $\lep \to \lambda$ in $L^p(\Omega)$ for every $p\in[1,\infty)$, we have
$$
\mint \vc a(\lambda)\cdot\nabla\psi\,\mathrm d\vc x=\lim_{\eps\to 0}\mint \vc a(\lambda_\eps)\cdot\nabla\psi\,\mathrm d\vc x=-\lim_{\eps\to 0}\mint g'(\lep)\vc a_0\cdot\nabla\lep \psi\,\mathrm d\vc x
$$
for every $\psi\in C^\infty_c(\Omega)$ and where $g(s)= |\vc n_0+s (\xi \vc n_1-\vc n_0)|$. The last term in the chain of identities above is null due to \eqref{tocitediva}, and therefore the proof is concluded.
\end{proof}

Finally, a result related to type II twins satisfying the cofactor conditions:
\begin{proposition}
\label{type ii inter}
Let $\mt U_1$ and $\mt U_2$ be two martensitic variants and $\hat{\mt R}\in SO(3)$, $\vc b_{II},\vc m_{II}\in\R^3$ be a type II solution to \eqref{compatib condit} and satisfying the cofactor conditions. Then, there exist $\mt R_0\in SO(3)$, $\xi\in\R$, $\vc a_0,\vc a_1\in \R^3$, $\vc n_0\in\mathbb{S}^2$ such that
for every $\lambda \in L^\infty(\Omega;[0,1])$ satisfying $\nabla \lambda\times \vc n_0=\vc 0$ in the sense of distributions, there exists $\vc y\in W^{1,\infty}(\Omega;\R^3)$ with
$$
\nabla\vc y = \mt R_0 [(1-\lambda)\mt  U_1 +\lambda \hat{\mt R}\mt U_2]= \mt {1} + \bigr(\lambda \xi \vc a_1+(1-\lambda)\vc a_0\bigl)\otimes \vc n_0,\qquad\text{a.e. in $\Omega$.}
$$ 
Furthermore, $\vc y$ satisfies a regular moving mask approximation, and $$\nabla \cdot \bigl( \lambda \xi \vc a_1+(1-\lambda)\vc a_0 \bigr)=0$$ in the sense of distributions.
\end{proposition}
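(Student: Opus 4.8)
The plan is to follow the proof of Proposition~\ref{type i inter} closely, using Theorem~\ref{CC for II} in place of Theorem~\ref{CC for I}; the computations are in fact lighter here, because for a type~II twin the interface normal stays fixed while only the amplitude vector varies. First I would apply Theorem~\ref{CC for II} to obtain $\mt R_0\in SO(3)$, $\xi\neq0$, $\vc a_0,\vc a_1\in\R^3$ and $\vc n_0\in\mathbb{S}^2$ satisfying \eqref{thm cc1 b} and, for every scalar $\lambda\in[0,1]$, the identity \eqref{forall lambda 2}. Given $\lambda\in L^\infty(\Omega;[0,1])$ with $\nabla\lambda\times\vc n_0=\vc0$, set $\vc a(\vc x):=\lambda(\vc x)\,\xi\vc a_1+(1-\lambda(\vc x))\,\vc a_0$ and take $\mt 1+\vc a(\vc x)\otimes\vc n_0$ as candidate deformation gradient. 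Because $\vc n_0$ is a constant vector, $\curl(a_i\,\vc n_0)=\nabla a_i\times\vc n_0=(\xi a_{1,i}-a_{0,i})(\nabla\lambda\times\vc n_0)=\vc0$ in $\mathcal D'(\Omega)$ for $i=1,2,3$, so the compatibility conditions \eqref{H3} hold and there exists $\vc y\in W^{1,\infty}(\Omega;\R^3)$ with $\nabla\vc y=\mt 1+\vc a\otimes\vc n_0$; by \eqref{forall lambda 2} this $\vc y$ satisfies $\nabla\vc y=\mt R_0[(1-\lambda)\mt U_1+\lambda\hat{\mt R}\mt U_2]$ a.e. Since $\det$ is quasiaffine and equals the constant $\det\mt U_1=:\mathfrak D$ on $SO(3)\mt U_1\cup SO(3)\mt U_2$, hence on its quasiconvex hull, \eqref{forall lambda 2} forces $1+(\lambda\xi\vc a_1+(1-\lambda)\vc a_0)\cdot\vc n_0\equiv\mathfrak D$ for all $\lambda\in[0,1]$, so $(\xi\vc a_1-\vc a_0)\cdot\vc n_0=0$ and, for our field $\vc a$, $\vc a(\vc x)\cdot\vc n_0=\mathfrak D-1$ a.e.

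Next I would establish the regular moving mask approximation via the converse part of Corollary~\ref{reg mov mask}, applied with $\mt Q=\mt 1$ to $\vc z:=\vc y-\vc x$, which satisfies $\nabla\vc z=\vc a(\vc x)\otimes\vc n_0$. When $\Omega$ is convex, $\nabla\lambda\times\vc n_0=\vc0$ forces $\lambda(\vc x)=f(\vc x\cdot\vc n_0)$ for some $f\in L^\infty(\R;[0,1])$, and since $(\nabla\vc z)\vc e=\vc a\,(\vc n_0\cdot\vc e)=\vc0$ whenever $\vc e\perp\vc n_0$, the map $\vc z$ depends only on $s:=\vc x\cdot\vc n_0$, namely $\vc z(\vc x)=\vc g(\vc x\cdot\vc n_0)$ with $\vc g(s):=\vc a_0 s+(\xi\vc a_1-\vc a_0)\int_0^s f+\vc c$ for a constant $\vc c$. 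The map $\vc g$ is Lipschitz, hence its restriction to the bounded interval $I_T:=\bigl(\inf_{\vc x\in\Omega}\vc x\cdot\vc n_0,\sup_{\vc x\in\Omega}\vc x\cdot\vc n_0\bigr)$ is an absolutely continuous curve of finite length; assuming $\mathfrak D\neq1$ it is simple, because $\vc g(s)\cdot\vc n_0=(\mathfrak D-1)s+\vc c\cdot\vc n_0$ is strictly monotone, and moreover $|\vc a|\geq|\vc a\cdot\vc n_0|=|\mathfrak D-1|>0$ a.e.; so Corollary~\ref{reg mov mask} applies, the moving interfaces being level sets of $\vc z$ — here, as in Remark~\ref{level set rk}, the planes $\{\vc x\cdot\vc n_0=\mathrm{const}\}$. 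For $\Omega$ merely connected and Lipschitz, $\lambda$ need no longer factor through $\vc x\cdot\vc n_0$, since the $\vc n_0$-slices of $\Omega$ may be disconnected; but $\vc z$ is still constant on each connected component of each such slice, so $\vc z(\Omega)$ is a countable union of Lipschitz arcs, hence $1$-rectifiable and, outside an $\mathscr H^1$-negligible set of branch points, locally a simple absolutely continuous curve of finite length — and the conclusion then follows from the generalization of Corollary~\ref{reg mov mask} described in the remark after Theorem~\ref{curved Hadamard}. I expect this passage from the convex to the general connected domain, together with the degenerate case $\mathfrak D=1$ (where $\vc g$ may self-intersect and must be reparametrized onto a simple curve), to be the only genuinely delicate point.

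Finally, for the divergence statement $\nabla\cdot(\lambda\xi\vc a_1+(1-\lambda)\vc a_0)=0$, i.e.\ $\nabla\cdot\vc a=0$ in $\mathcal D'(\Omega)$, I would observe that, in contrast to the type~I case where the non-constant normal forced a mollification argument, here the constancy of $\vc n_0$ makes the conclusion immediate: $\vc n_0\in W^{1,1}(\Omega;\R^3)$ with $|\vc n_0|=1$, and $\vc a\cdot\vc n_0=\mathfrak D-1\in\R$ a.e., so Proposition~\ref{divnulla} gives $\nabla\cdot\vc a=0$ at once. This would complete the proof.
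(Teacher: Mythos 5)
Your proof reaches the correct conclusions, but it routes the ``regular moving mask'' verification through the curved-interface machinery (Corollary~\ref{reg mov mask}, i.e.\ the converse direction of Theorem~\ref{curved Hadamard}), whereas the paper exploits the fact that for a type~II twin the normal $\vc n_0$ is \emph{constant} and therefore invokes the planar result Proposition~\ref{plane Hadamard} together with Remark~\ref{sono levelset i piani} directly. This is a genuine difference, and not a cosmetic one: applying Corollary~\ref{reg mov mask} forces you to verify that $\vc z(\Omega)$ is contained in a simple absolutely continuous curve of finite length and that $|\vc a|>0$ a.e.\ --- both of which you obtain from $\mathfrak D\neq 1$ via strict monotonicity of $\vc g(s)\cdot\vc n_0=(\mathfrak D-1)s+\vc c\cdot\vc n_0$, and which you correctly flag as failing when $\mathfrak D=1$. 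In the type~I setting the image of $\vc z$ really is a line segment, so the simplicity and non-degeneracy checks are automatic and Corollary~\ref{reg mov mask} is the natural tool; in the type~II setting the image of $\vc z$ is a genuinely non-straight curve in $\mathrm{span}\{\vc a_0,\xi\vc a_1-\vc a_0\}$, and the paper sidesteps the simplicity question entirely by noting that the constant normal already tells you the interfaces are the planes $\{\vc x\cdot\vc n_0=\mathrm{const}\}$, so there is nothing to reconstruct. Your route therefore incurs two extra delicacies (the degenerate case $\mathfrak D=1$, and the passage to non-convex $\Omega$ via the unproved remark after Theorem~\ref{curved Hadamard}) that the paper's route avoids altogether; to make your proof fully rigorous you would need to close these gaps or, more simply, switch to the planar argument. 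On the other hand, your observation that the constancy of $\vc n_0$ lets you apply Proposition~\ref{divnulla} immediately, with no mollification, is exactly the paper's argument and is correct. The preliminary steps (invoking Theorem~\ref{CC for II} via \eqref{forall lambda 2}, defining $\vc a$, checking $\curl(a_i\vc n_0)=(\xi a_{1,i}-a_{0,i})\nabla\lambda\times\vc n_0=\vc 0$, and deducing $(\xi\vc a_1-\vc a_0)\cdot\vc n_0=0$ from constancy of the determinant) all match the paper.
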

\begin{proof}
From Theorem~\eqref{CC for II} and in particular from~\eqref{forall lambda 2} we know the existence of
$\mt R_0,\mt R_1\in SO(3)$, $\xi\in\R$, $\vc a_0,\vc a_1\in \R^3$, $\vc n_0\in\mathbb{S}^2$ satisfying 
$$
\mt R_0 [(1-\lambda)\mt  U_1 +\lambda \hat{\mt R}\mt U_2]= \mt {1} + \bigr(\lambda \xi \vc a_1+(1-\lambda)\vc a_0\bigl)\otimes \vc n_0,\qquad\text{for all $\lambda\in[0,1]$.}
$$ 
Thus choose $\lambda\in L^\infty(\Omega)$ to be a function such that $\lambda\in[0,1]$ a.e., and define
\begin{align*}
&\vc a(\vc x):=(\vc a_0+\lambda(\vc x)(\xi \vc a_1-\vc a_0))
\end{align*}
It is trivial to check, by arguing as in the proof of Proposition~\ref{type i inter}, that also (H2) holds.\\

Now, by taking the curl of $\nabla \vc y_i$ we deduce that
$$
\nabla \times(\lambda \vc n_0)=\nabla \lambda\times \vc n_0=0,
$$
in the sense of distributions. Therefore taking $\lambda$ such that $\nabla \lambda\parallel \vc n$ in a weak sense, by Proposition \ref{plane Hadamard} and Remark \ref{sono levelset i piani} follows the existence of a family of moving planar interfaces. The fact that $\nabla\cdot \vc a=0$ in the sense of distributions trivially follows from Proposition \ref{divnulla}.
\end{proof}

\section{Experimental evidence}
The physical assumptions which were made in this work and some of the properties that have been deduced here are currently being investigated from an experimental perspective. Indeed, the authors of \cite{XC} have used X-ray Laue microdiffraction to measure the orientations and structural parameters of variants and phases in Zn\textsubscript{45}Au\textsubscript{30}Cu\textsubscript{25}. With this modern technique, the scanned area is meshed with small rectangles (e.g., in \cite{XC} authors use \SI{2}{\micro\metre} wide squares), and one can identify the phase and variant in each cuboid which has as a basis a rectangle of the mesh and depth of approximate \SI{2}{\micro\metre} from the sample surface. In cubes where a single phase or variant has been recognised, one can also measure the lattice parameters necessary to compute the average deformation gradient. In this way one can investigate what is happening at the phase interface by studying the lattice parameters in mesh rectangles where a martensite variant has been recognised and which have at least one neighbouring rectangle where the Laue microdiffraction was able to identify austenite.\\
\indent
In this way, the authors of \cite{XC} compute in some of the mesh cubes lying on the interface the number $\|\cof(\nabla \vc y -\mt{1})\|$, which, as it is easy to verify, is zero if and only if $\nabla\vc  y -\mt{1}$ is rank-one. Experimental results give $\|\cof(\nabla \vc y -\mt{1})\|$ to be of the order of $10^{-4}$, which seems small enough to be considered zero, and hence to justify \eqref{h1}. \\
\indent
Further investigations are ongoing to verify that $\nabla \vc y(\vc x)$ remains constant in time when $\vc x$ is not on the interface. This seems a reasonable assumption, as long as no external force acts on the sample and as long as one neglects other internal stresses giving rise to elastic deformations which, anyway, seem to be small compared to the deformations induced by the phase transition.\\
\indent
In conclusion, the data collected up till now seem to confirm the validity of the assumptions that we made in the present work. However, in the images in \cite{XC} there are many mesh cubes close to some of the phase interfaces where the X-ray Laue microdiffraction is not able to recognize any single variant or phase, and hence where the validity of the assumptions to get \eqref{h1} could be questioned or should be verified in some other way.\\

%

\bibliographystyle{plain}
\bibliography{biblio}



\end{document}